\definecolor{mediumtealblue}{rgb}{0.0, 0.33, 0.71}
\definecolor{tangelo}{rgb}{0.98, 0.3, 0.0}
\newtheorem{theorem}{Theorem}[section]
\newtheorem{defn}[theorem]{Definition}
\newtheorem{lem}[theorem]{Lemma}
\newtheorem{rem}[theorem]{Remark}
\newtheorem{prop}[theorem]{Proposition}
\newtheorem{cor}[theorem]{Corollary}
\newcommand{\erjp}{\mathbf{e}^{j+1}}
\newcommand{\erj}{\mathbf{e}^{j}}
\newcommand{\ru}{\mathrm{U}}
\newcommand{\mujp}{\mathcal{U}^{j+1}}
\newcommand{\muj}{\mathcal{U}^j}
\newcommand{\ujp}{\ru^{j+1}}
\newcommand{\uj}{\ru^j}
\newcommand{\EE}{\mathbb{E}}
\newcommand{\mo}{\mathscr{O}}
\newcommand{\h}{\mathrm{H}}
\newcommand{\ve}{\mathrm{V}}
\newcommand{\el}{\mathrm{L}}
\newcommand{\err}{\mathbb{R}}
\newcommand{\eps}{\varepsilon}
\newcommand{\bu}{\mathbf{u}}
\newcommand{\bv}{\mathbf{v}}
\newcommand{\by}{\mathbf{y}}
\newcommand{\bb}{\mathbf{B}}
\newcommand{\mb}{\mathrm{B}}
\newcommand{\bx}{\mathbf{x}}
\newcommand{\FF}{\mathbb{F}}
\newcommand{\PP}{\mathbb{P}}
\newcommand{\rA}{\mathrm{A}}
\newcommand{\bw}{\mathbf{w}}
\DeclareMathOperator{\trace}{\mathrm{Tr}}
\newcommand{\bW}{\boldsymbol{W}}
\newcommand{\bee}{\boldsymbol{e}}
\newcommand{\cF}{\mathscr{F}}
\newcommand{\cL}{\mathscr{L}}
\author{Hakima Bessaih}
\author{Erika Hausenblas}
\author{Tsiry Randrianasolo}
\author{Paul Razafimandimby}
\address[Hakima Bessaih]{University of Wyoming, Department of mathematics, Laramie, WY 82071-3036, United-States}
\address[Erika Hausenblas]{Montanuniversit\"at Leoben, Lehrstuhl f\"ur angewandte mathematik, Peter-Tunner-Stra\ss e 25-27, 8700 Leoben, Austria}
\address[Tsiry Randrianasolo]{Bielefeld University, Department of applied mathematics, Universitätsstra\ss e 25, 33615 Bielefeld, Germany}
\email[Corresponding author]{\href{mailto:trandria@math.uni-bielefeld.de}{trandria@math.uni-bielefeld.de}}
\address[Paul Razafimandimby]{Department of Mathematics and Applied Mathematics, University of Pretoria, Lynwood Road, Hatfield, Pretoria 0083, South Africa}
\title[Numerical approximation of stochastic evolution equations]{Numerical approximation of stochastic evolution equations: Convergence in scale of Hilbert spaces}
\date{\today}
\thanks{
	The research of Hakima Bessaih was supported by the NSF grant DMS-1418838.
	The research of Erika Hausenblas  was supported by the Austrian Science Fund (FWF): P25968.
	The research of Tsiry Randrianasolo was supported by the Austrian Science Fund (FWF): P25968 and the German Research Council as part of the Collaborative Research Center SFB 1283.}
\begin{document}
	\begin{abstract}
		The present paper is devoted to the numerical approximation of an abstract stochastic nonlinear evolution equation in a separable Hilbert space {$\h$}.  Examples of  equations  which fall into our framework include the GOY and Sabra shell models and { a class of nonlinear heat equations.} The space-time numerical scheme is defined in terms of a Galerkin approximation in space and  a { semi-implicit Euler--Maruyama scheme in time}. {We prove the convergence in probability of our scheme by means of an estimate of the error on a localized set of arbitrary large probability.} Our error estimate is shown to hold in a more regular space $\ve_{\beta}\subset \h$ with $\beta \in [0,\frac14)$ and { that the explicit rate of convergence of our scheme depends  on this parameter $\beta$. }
	\end{abstract}
	\maketitle
	\section{Introduction}
	\label{sec:introduction}
	Throughout this paper we fix a complete filtered probability space $\mathfrak{U}=(\Omega, \cF, \FF, \PP)$ with the filtration $\FF= \{\cF_t;t\geq 0\}$ satisfying the usual  conditions. {We also  fix } a separable Hilbert space $\h$ equipped with a scalar product $(\cdot,\cdot)$ with the associated norm {$| \cdot |$} and  another separable Hilbert space $\mathscr{H}$.
	In this paper, we analyze numerical approximations { for an abstract stochastic evolution equation  of the form}

	\begin{equation}
	\label{spdes}
	\left\{
	\begin{split}
	d\bu &= -[\rA\bu + \mb(\bu,\bu)]dt + G(\bu)dW,~t\in[0,T],
	\\
	\bu(0)&=\bu_0,
	\end{split}
	\right.
	\end{equation}

	where hereafter $T>0$ is {a fixed number} and  $\rA$ is a self-adjoint positive operators on $\h$. { The operators $\mb$} and $G$ are nonlinear maps satisfying several technical assumptions to be specified later and $W= \{W(t); 0\leq t\leq T\}$ is a $\mathscr{H}$-valued Wiener process. 

	The abstract equation~\eqref{spdes} can describe several problems from different fields including mathematical finance, electromagnetism, and fluid dynamic. {Stochastic models have been widely used to describe small fluctuations or perturbations which arise in nature. 
		For a more exhaustive introduction to the importance of stochastic models and the analysis of stochastic partial differential equations, we refer the reader  to \cite{chow,hairer2009introduction,krylov+rozoskii,prevot+roeckner,revuz+yor}.}
	
	Numerical analysis for stochastic partial differential equations (SPDEs) has known a strong interest in the past decades. Many algorithms which are based on either finite difference or finite element methods or spectral  Galerkin methods  (for the space discretization) and on either Euler schemes or Crank-Nicholson or Runge-Kutta schemes (for the temporal discretization) have been introduced for both the linear and nonlinear cases and their rate of convergence have been investigated widely. {Here we should note that  the  orders of convergence that are frequently analyzed are the weak and strong orders of convergence.}  The literature on numerical analysis for SPDEs is now very extensive.  Without being exhaustive,  we only cite amongst other the recent papers \cite{Lindneretal-2013, Carellietal-2012, AD+JP-2009, AA+SL-2016,EC+EH+AP}, the excellent review paper \cite{Jentzen-Review} and references therein. Most of the literature deals with the stochastic heat equations with globally Lipschitz nonlinearities, but there are also several papers that treat abstract stochastic evolution equations. For example, Gyongy and Millet in \cite{gyongy2009rate} { investigated a general evolution equation with an operator that has the strong monotone and global} Lipschitz properties. They were able to implement a space-time discretization and showed a rate of convergence in mean under appropriate assumptions.  Similar rate of convergence have been {obtained} by Bessaih and Schurz in \cite{bessaih2007upper} for  an equation with globally Lipschitz nonlinearities. { When a system of SPDEs with non-globally Lipschitz nonlinearities, such as the stochastic Navier-Stokes equations,  is considered the story is completely different}. Indeed, in this case the rate of convergence obtained is generally only in probability. {This kind of  convergence was introduced for the first time by Printems in \cite{Printems} and is  well suited for SPDEs with locally Lipschitz coefficients. When the stochastic perturbation is in an additive form (additive noise), then using a path wise argument one can prove a convergence in mean, we refer to  Breckner in \cite{Breckner}. Let us mention that in this case, no rate of convergence can be deduced. }
	
	{Recent literature involving nonlinear models with nonlinearities which are locally Lipschitz  are   \cite{Dorsek,Carelli+Prohl, ZB+EC+AP-IMA, bessaih2014splitting} and references therein}.  In \cite{ZB+EC+AP-IMA} { martingale solutions} to  the incompressible
	Navier-Stokes equations with Gaussian multiplicative noise are constructed from a finite element based space-time { discretizations}. The authors of \cite{Carelli+Prohl} proved the convergence in probability {with} rates of an explicit and an implicit numerical schemes by means of a {Gronwall argument}. The main issue when the term $\mb$ is not {globally} Lipschitz lies on
	its interplay with the stochastic forcing, which prevents a Gronwall argument in the context of expectations. This issue is for example solved in \cite{Breckner,Caraballo} by the introduction of a weight, which when carefully chosen contributes in removing unwanted terms and allows to use Gronwall lemma. In \cite{Carelli+Prohl}, the authors use different approach by computing the error estimates on a sample subset $\Omega_k\subset\Omega$ with large probability. In particular, the set $\Omega_k$ is carefully chosen so that the random variables $\lVert \nabla\bu^\ell\rVert_{L^2}$ are bounded as long as the events are taken in $\Omega
	_k$, and $\lim_{k\searrow 0} \PP(\Omega\setminus\Omega_k)= 0$.  The result is then obtained using standard arguments based on the Gronwall lemma. 
	Other kinds of numerical algorithms have been used  in \cite{bessaih2014splitting} for a 2D stochastic Navier-Stokes equations. There, a {splitting up method} has been used and a rate of convergence in probability is obtained. A blending of a splitting scheme and the method of cubature on Wiener space applied to a spectral Galerkin discretisation of degree $N$ is used in \cite{Dorsek} to approximate the marginal distribution of the solution of the stochastic Navier- Stokes equations on the two-dimensional torus and rates of convergence are also given. For the numerical analysis of other kind of stochastic nonlinear models that enjoy the local  Lipschitz condition,  
	without being exhaustive, we refer to \cite{AD+JP-2006,AD+JP-1999,DB+AJ,JC+JH+ZL} and references therein. They include the {stochastic} Schr\"odinder, Burgers and KDV  equations.  
	
	In the present  paper, we are interested in the numerical treatment of the abstract stochastic evolution equations \eqref{spdes}. We first give a simple and short proof of the existence and uniqueness of a mild solution and study the regularity of this solution.  The result about the existence of solution is based on a fixed point argument recently developed in \cite{BHR-14}.  Then, we discretize \eqref{spdes} using a coupled Galerkin method and (semi-)implicit Euler scheme and show { convergence in probability with rates} {in $\ve_{\beta}:= D(\rA^\beta)$.} {Regarding our approach it is similar to \cite{Carelli+Prohl} and \cite{Printems}, however, the results are different. Indeed, while \cite{Carelli+Prohl} and \cite{Printems} establish their rates of convergence in the space $\h$ where the solution lives, we establish our rate of convergence in $\ve_\beta\subset \h$ where $\beta \in [0,\frac14)$ is arbitrary.} Hence, our result does not follow from the papers \cite{Carelli+Prohl} and \cite{Printems}. In contrast to the nonlinear term of Navier--Stokes equations with periodic boundary condition treated in \cite{Carelli+Prohl}, our nonlinear term  does not satisfy the property $\langle \mb(\bu,\bu),\rA\bu \rangle=0$ which plays a crucial role in the analysis in \cite{Carelli+Prohl}.  We should also point out  that our model  does not fall into the general framework of the papers \cite{gyongy2009rate} and \cite{bessaih2007upper},  see Remark \ref{REM-NEW}.

	Examples of semilinear equations  which fall into our framework include the GOY and Sabra shell models.
	{These toy models are used to mimic some features of {turbulent flows.} It seems that our work is the first one rigorously addressing the {numerical approximation of such models}.  Our result also confirm that, in term of numerical analysis, shell models behave far better than the Navier-Stokes equations. On the theoretical point of view, we provide a new and simple proof of the existence of solutions to stochastic shell models driven by Gaussian multiplicative noise. On the physical point of view, 
		it is also worth mentioning that shell models of turbulence are toy models which consist of infinitely many nonlinear differential equations
		having a structure similar to the Fourier representation of the
		Navier-Stokes equations, see \cite{Ditl}. Moreover, they capture quite well the statistical properties of three dimensional Navier-Stokes equations, like the Kolmogorov energy spectrum and the intermittency scaling exponents for the high-order
		structure functions, see  \cite{Ditl} and \cite{EG-2012}. }	
	Due to their success  in the study of turbulence, new shell models have been derived  by several prominent physicists for the investigation of the turbulence in magnetohydrodynamics, see for instance \cite{plunian2013shell}.
	
	Another example {of system of  equations which falls into our framework is a class of nonlinear heat equations described in Section~\ref{motivations}}.  We do not know whether our results can cover the numerical analysis of 1D  stochastic nonlinear heat equations driven by additive space-time noise.  Despite this fact we believe that our paper is still interesting as we are able to treat a class of 2D stochastic nonlinear heat equations with locally Lipschitz coefficients and we are not aware of results similar to ours. In fact, most of results related to stochastic heat equations are either about 1D model, or $d$-dimensional, $d\in \{1,2,3\}$,  models with globally Lipschitz coefficients and deal with weak convergence or convergence in weaker norm, see for instance \cite{Lindneretal-2013, Carellietal-2012, AD+JP-2009, AA+SL-2016}.

	This paper is organized as follows: in Section~\ref{method-and-materials},  we introduce the necessary notations and the standing assumptions that will be used in the present work. In Section~\ref{Sec-Stabl}, we present our numerical scheme and also discuss the stability and existence of solution at each time step. The convergence of the proposed method  is presented in Section~\ref{Sec-Proof}. In Section~\ref{motivations} we present  the stochastic shell models for turbulence and a class of stochastic nonlinear heat equations as motivating examples.
	
	\section{Notations, assumptions, preliminary results and the main theorem}\label{method-and-materials}
	In this section we introduce the necessary notations and the standing assumptions that will be used in the present work. We will also introduce our numerical scheme and state our main result.
	\subsection{Assumptions and notations}
	Throughout this paper we fix a separable Hilbert space $\h$ with norm $\vert \cdot \vert$ and a fixed orthonormal basis $\{\psi_n; n\in \mathbb{N}\}$. 
	We assume that we are given a linear operator $\rA:D(\rA)\subset \h \to \h$ which is a self-adjoint and positive operator such that the fixed  orthonormal basis $\{\psi_n; n\in \mathbb{N}\}$ satisfies $$\{\psi_n; n\in \mathbb{N}\}\subset D(\rA), \quad \rA \psi_n= \lambda_n \psi_n, $$
	for an increasing sequence of positive numbers $\{\lambda_n; n\in \mathbb{N}\}$  with $\lambda_n \to \infty$ as $n\nearrow \infty$. It is clear that $-\rA$ is the infinitesimal generator of  an analytic semigroup $e^{-tA}, t\ge0$, on $\h$.
	For any $\alpha \in \err$ the domain of $\rA^\alpha$ denoted by $\ve_\alpha=D(\rA^\alpha)$ is a separable Hilbert space when equipped with the scalar product
	\begin{equation}
	(( \bu, \bv ))_\alpha=\sum_{k=1}^{\infty} \lambda_k^{2\alpha} \bu_k {\bv}_k, \text{ for } \bu,\; \bv\in \ve_\alpha. 
	\end{equation}
	The norm associated to this scalar product will be denoted by $\Vert \bu \Vert_\alpha$, $\bu \in \ve_\alpha$.
	In what follows we set $\ve:=D(\rA^\frac12)$. 
	
	Next, we consider a nonlinear map $\mb(\cdot, \cdot): \ve \times \ve \to \ve^\ast$ satisfying the following set of assumptions, where hereafter {$\ve^\ast$ denotes the dual of the Banach space $\ve$.}
	\begin{enumerate}[label=(\textbf{B\arabic{*}})]
		\item[]
		\item \label{NB-a}	 There exists a constant $C_0>0$ such that for any $\theta\in [0,\frac12)$ and $\gamma\in (0,\frac12)$ satisfying $\theta+\gamma \in (0,\frac12]$, we have
		\begin{equation}\label{Eq-nonlin-B}
		\Vert \mb(\bu,\bv)-\mb(\bx,\by)\Vert_{-\theta}\le  
		\begin{cases}
		C_0 \Vert \bu -\bx \Vert_{\frac12-(\theta+\gamma)} (\Vert \bv \Vert_\gamma + \Vert \by\Vert_\gamma )+ \Vert \bv-\by\Vert_\gamma (	\Vert \bu\Vert_{\frac12-(\theta+\gamma)} +\Vert \bx \Vert_{\frac12-(\theta+\gamma)})\\ 
		\;\;\text{for any } \bu,\bx \in \ve_{\frac12-(\theta+\gamma)}\text{ and } \bv, \by \in \ve_{\gamma},\\
		C_0 (\Vert \bu \Vert_\gamma +\Vert \bx\Vert_\gamma ) \Vert \bv-\by \Vert_{\frac12-(\theta+\gamma)}+  \Vert \bu-\bx\Vert_\gamma (	\Vert \bv\Vert_{\frac12-(\theta+\gamma)} +\Vert \by \Vert_{\frac12-(\theta+\gamma)}) \\ 
		\;\;\text{for any } \bv, \by \in \ve_{\frac12-(\theta+\gamma)} \text{ and } \bu, \bx  \in \ve_{\gamma}.
		\end{cases}
		\end{equation} 
		Due to the continuous embedding $\ve_{-\theta}\subset \ve_{-\frac12}$, $\theta \in [0,\frac12)$, 
		\eqref{Eq-nonlin-B} holds with $\theta$ and $\frac12-(\theta+\gamma)$ respectively replaced by $\frac 12$ and $\frac12-\gamma$ where $\gamma >0$ is arbitrary.
		
		In addition to the above, we assume that for any $\eps>0$ there exists a constant $C>0$ such that 
		\begin{equation}\label{Eq-nonlin-B-H2}
		\lvert \mb(\bu,\bv)\rvert\le C \lvert \bu \rvert \lVert \bv \rVert_{\frac12+\eps}, \text{ for any } \bu \in \h, \bv \in \ve_{\frac12+\eps}.
		\end{equation}
		\item \label{NB-b} We also assume that for any $ \bu, \bv \in \ve$
		\begin{equation}
		\langle \rA \bv+ \bb(\bu,\bv),\bv\rangle \ge  \Vert \bv \Vert^2_\frac12.
		\end{equation}
		\item \label{NB-c} We assume that for any $\bu \in \h$ we have 
		\begin{equation}
		\mb(0,\bu)=\mb(\bu,0)=0.
		\end{equation}

	\end{enumerate}
	Note that Assumptions \ref{NB-a} and \ref{NB-c} imply
	\begin{enumerate}[label=(\textbf{B\arabic{*}})$^\prime$]
		\item \label{Assum-B1p}	There exists a constant $C_0>0$ such that for any numbers $\theta\in [0,\frac12)$ and $\gamma\in (0,\frac12)$ satisfying $\theta+\gamma \in {(0,\frac12]}$, we have 
		\begin{equation}\label{Eq-nonlin-B-2}
		\Vert \mb(\bu,\bv)\Vert_{-\theta}\le C_0 \begin{cases}
		\Vert \bu \Vert_{\frac12-(\theta+\gamma)} \Vert \bv \Vert_\gamma \;\; \text{ for any } \bu \in \ve_{\frac12-(\theta+\gamma)}\text{ and } \bv\in \ve_{\gamma},\\
		\Vert \bu \Vert_\gamma \Vert \bv\Vert_{\frac12-(\theta+\gamma)} \text{ for any } \bv \in \ve_{\frac12-(\theta+\gamma)}, \text{ and } \bu \in \ve_{\gamma}.
		\end{cases}
		\end{equation} 
		If $\theta=\frac12$, then \eqref{Eq-nonlin-B-2} holds with $\frac12-(\theta+\gamma)$ replaced by $\frac12-\gamma$ where $\gamma>0$ is arbitrary.
	\end{enumerate}
	Let $\{\mathfrak{w}_j; \; j \in \mathbb{N} \}$ be a sequence of mutually independent and identically distributed standard Brownian motions  on $\mathfrak{U}$. Let $\mathscr{H}$ be separable Hilbert space and $\mathscr{L}_1(\mathscr{H})$ be the space of all trace class operators on $\mathscr{H}$. 
	Recall that if  $Q\in \mathscr{L}_1(\mathscr{H})$ is a symmetric, positive operator and $\{\varphi_j; j\in \mathbb{N} \}$ is an orthonormal basis of $\mathscr{H}$ consisting of eigenvectors of $Q$, then the series
	$$ W(t)= \sum_{j=1}^\infty \sqrt{q_j} \mathfrak{w}_j(t) \varphi_j, \quad t\in [0,T], $$ where  $\{q_j;\; j\in \mathbb{N}\}$ are the eigenvalues of $Q$,   
	converges in $\el^2(\Omega; C([0,T]; \mathscr{H}))$ and it defines an $\mathscr{H}$-valued Wiener process with covariance operator $Q$. Furthermore,  for any positive integer $\ell>0$ there exists a constant $C_\ell >0$ such that
	\begin{equation}\label{Increment}
	\EE \Vert W(t)- W(s)\Vert^{2\ell}_{\mathscr{H}}\le C_\ell \vert t-s\vert^\ell \left(\trace Q\right)^\ell, 
	\end{equation} for any $t,s\ge 0$ with $t\neq 0$. 
	Before proceeding further we recall few facts about stochastic integral.
	Let $\mathrm{K}$ be a separable Hilbert space, $\mathscr{L}(\mathscr{H}, \mathrm{K})$ be the space of all bounded linear $\mathrm{K}$-valued  operators defined on $\mathscr{H}$, $\mathscr{M}_T^2(\mathrm{K})$ be the space of all equivalence classes of $\mathbb{F}$-progressively measurable processes $\Psi: \Omega\times [0,T]\to\mathrm{K}$ satisfying 
	$$ \EE\int_0^T \Vert \Psi(s)\Vert^2_{\mathrm{K}}ds <\infty.$$ 
	If $Q\in \mathscr{L}_1(\mathscr{H})$ is a symmetric, positive and trace class operator then $Q^\frac12\in \mathscr{L}_2(\mathscr{H})$ and for any $\Psi \in \mathscr{L}(\mathscr{H}, \mathrm{K})$ we have $\Psi \circ Q^\frac12 \in \mathscr{L}_2(\mathscr{H}, \mathrm{K}) $, where  $\mathscr{L}_2(\mathscr{H}, \mathrm{K})$ (with $\mathscr{L}_2(\mathscr{H}):=\mathscr{L}_2(\mathscr{H}, \mathscr{H})$) is the Hilbert space of all operators $\Psi\in \mathscr{L}(\mathscr{H}, \mathrm{K} )$ satisfying 
	$$ \Vert \Psi\Vert_{\mathscr{L}_2(\mathscr{H},\mathrm{K})}^2 =\sum_{j=1}^\infty \Vert \Psi\varphi_j\Vert^2_{\mathrm{K}}<\infty. $$   
	Furthermore, from the theory of stochastic integration on infinite dimensional Hilbert space, see \cite{DP+JZ-92},
	{
		for any
		$\mathscr{L}(\mathscr{H},\mathrm{K})$-valued process $\Psi$ such that  $\Psi\circ Q^{1/2}\in \mathscr{M}^2_T(\mathscr{L}_2(\mathscr{H},\mathrm{K}))$
	} the process $M$ defined by 
	$$ M(t) =\int_0^t \Psi(s)dW(s), t\in [0,T],$$ is a $\mathrm{K}$-valued martingale. Moreover,  we have the following It\^o's isometry 
	\begin{equation}
	\EE \biggl(\biggl\Vert \int_0^t \Psi(s) dW(s)\biggr\Vert^2_{\mathrm{K}}\biggr) =\EE \biggl(\int_0^t \Vert \Psi(s) Q^\frac12 \Vert^2_{\mathscr{L}_2(\mathscr{H}, \mathrm{K})} ds \biggr), \forall t\in [0,T],
	\end{equation}	
	and the Burkholder-Davis-Gundy inequality 
	\begin{equation}
	\EE\biggl(\sup_{0\le s\le t}\biggl \Vert \int_0^s \Psi(\tau) dW(\tau)\biggr\Vert_ {\mathrm{K}}^q\biggr) \le C_q \EE \biggl( \int_0^t \Vert \Psi(s) Q^\frac12 \Vert^2_{\mathscr{L}_2(\mathscr{H},\mathrm{K})}ds \biggr)^\frac q2, \forall t\in [0,T], \forall q\in (1,\infty).
	\end{equation}
	
	Now, we impose the following set of conditions on the nonlinear term $G(\cdot)$ and the Wiener process $W$.
	\begin{enumerate}[label=(\textbf{N})]
		\item \label{noise}Let $\mathscr{H}$ be a separable Hilbert space.	We assume that the driving noise $W$ is a $\mathscr{H}$-valued Wiener process with a positive and symmetric covariance operator $Q\in \mathscr{L}_1(\mathscr{H})$.
	\end{enumerate}
	
	\begin{enumerate}[label=(\textbf{G})]
		\item \label{nonlin-g}	We assume that the nonlinear function $G:\h\to \mathscr{L}(\mathscr{H},\ve_\frac14) $ is measurable and that there exists a constant $C_1>0$ such that for any $\bu \in \h$, $\bv\in \h$ we have 
		\begin{equation*}
		\Vert G(\bu)-G(\bv)\Vert_{\mathscr{L}(\mathscr{H},\ve_{\frac14})}\le C_1\lvert \bu -\bv \vert.
		\end{equation*}
	\end{enumerate}
	\begin{rem}\label{REM-G}
		\begin{enumerate}[label=(\alph{*})]
			\item[]
			\item \label{REM-G_i} Note that the above assumption implies that $G:\h \to \mathscr{L}(\mathscr{H},\h)$ is globally Lipschitz and of at most linear growth, i.e, there exists a constant $C_2>0$ such that 
			\begin{align*}
			\Vert G(\bu)-G(\bv)\Vert_{\mathscr{L}(\mathscr{H},\h)} \le C_2\vert \bu-\bv \vert,\\
			\vert G(\bu) \vert \le C_2(1+ \vert \bu \vert),
			\end{align*}
			for any $\bu, \;\bv \in \h$.
			\item \label{REM-G-ii} There also  exists a number $C_3>0$ such that 
			{
				\begin{align*}
				\Vert G(\bu)-G(\bv)\Vert_{\mathscr{L}(\mathscr{H},\ve_\frac14)} &\le C_3\Vert \bu-\bv \Vert_{\frac14},
				\\
				\Vert G(\bu) \Vert_{\mathscr{L}(\mathscr{H},\ve_\frac14)} &\le C_3(1+ \Vert \bu \Vert_{\frac14}),
				\end{align*}}
			for any $\bu, \;\bv \in \ve_\frac14$.
			\item Owing to item \ref{REM-G_i} of the present remark, { if $\bu\in \mathscr{M}^2_T(\h)$, then $G(\bu)\circ Q^\frac12\in \mathscr{M}^2_T(\mathscr{L}_2(\mathscr{H},\h))$ }and the stochastic integral $\int_0^t G(\bu(s))dW(s)$ is a well defined $\h$-valued martingale.
		\end{enumerate}
	\end{rem}
	
	To close the current subsection we formulate the following remark.
	\begin{rem}\label{REM-NEW}
		Our assumptions on our problem do not imply the assumptions in either \cite{gyongy2009rate} or \cite{bessaih2007upper}. To justify this claim assume that the coefficient of the noise $G$ of our paper and those of    \cite{gyongy2009rate} and \cite{bessaih2007upper} are both zero. Let us now set $$ A(t,u)=-\rA u -B(u,u), $$ which basically corresponds to the drift in both \cite{gyongy2009rate} and \cite{bessaih2007upper}. For the sake of simplicity we take $\theta=0$ and $\gamma=\frac14$ in our assumption \ref{NB-a}. The spaces  $H$ and $V$ in \cite{gyongy2009rate} and \cite{bessaih2007upper} are respectively $\ve_0$ and $\ve_\frac12$ in our framework. The map $A(t, u)$ defined above satisfies
		\begin{equation*}
		\langle A(t,u)-A(t,v), u-v \rangle \le -\lvert u-v \rvert^2 + C_0 \lvert u-v\rvert \lVert u- v\rVert_\frac14 \left(\lVert u\rVert_{\frac14}+\lVert v \rVert_{\frac14}\right).  
		\end{equation*} 
		This implies that our assumptions does not imply either  \cite[Assumptions 2.1(i) and (2.2)(1)]{gyongy2009rate} or \cite[Assumption (H2)]{bessaih2007upper}.  
	\end{rem}

	\subsection{Preliminary results}
	In this subsection we recall and {derive some results that will be used }in the remaining part of the paper. To this end, we first define the notion of solution of \eqref{spdes}.
	\begin{defn}
		An $\mathbb{F}$-adapted process $\bu$ is called a {\textit{ weak  solution}} of \eqref{spdes} (in the sense of PDEs)
		if the following conditions are satisfied
		\begin{enumerate}[label=(\roman{*})]
			\item $\bu \in \el^2(0,T; \ve)\cap C([0,T]; \h) $ \quad $\mathbb{P}$-a.s.,
			\item for every $t\in [0,T]$ we have $\mathbb{P}\mbox{-a.s}.$
			\begin{equation}\label{Weak}
			(\bu(t),\phi)=(\bu_0,\phi)-\int_0^t\left(\langle \rA\bu(s)+\bb(\bu(s), \bu(s)),\phi\rangle \right)ds+
			\int_0^t \langle \phi , G(\bu(s)) d\bW(s)\rangle,
			\end{equation}
			{for any $\phi\in \ve$.}
		\end{enumerate}
	\end{defn}
	
	{
		\begin{defn}
			An $\mathbb{F}$-adapted process $\bu \in C([0,T]; \h) $ $\mathbb{P}$-a.s. is called a {\textit{mild solution}} to \eqref{spdes} if 
			for every $t\in [0,T]$, 
			\begin{equation}\label{Eq-REM}
			\bu(t)=e^{-t\rA} \bu_0 +\int_{0}^t e^{-(t-r)\rA} \mb(\bu(r), \bu(r)) dr + \int_0^t e^{-(t-r)\rA} G(\bu(r)) d\bW(r), \text{ $\mathbb{P}$-a.s}.
			\end{equation}
	\end{defn}}
	{
		\begin{rem}\label{REM-Mild}
			Observe that if $\bu \in \el^2(0,T;\ve)\cap C([0,T],\h)$ is a mild solution to \eqref{spdes}, then for any $t>s\ge0$,
			\begin{equation*}
			\bu(t)=e^{-(t-s)\rA} \bu(s) +\int_{s}^t e^{-(t-r)\rA} \mb(\bu(r), \bu(r)) dr + \int_s^t e^{-(t-r)\rA} G(\bu(r)) dW(r), \text{ $\mathbb{P}$-a.s}.
			\end{equation*}
			In fact, we have 
			\begin{align*}
			\bu(t)=& e^{-(t-s)\rA}\left(e^{-s\rA} \bu_0 +\int_{0}^s e^{-(s-r)\rA} \mb(\bu(r), \bu(r)) dr + \int_0^s e^{-(s-r)\rA} G(\bu(r)) dW(r)\right)\\
			&\quad + \int_{s}^t e^{-(t-r)\rA} \mb(\bu(r), \bu(r)) dr + \int_s^t e^{-(t-r)\rA} G(\bu(r)) dW(r)\\
			=& e^{-(t-s)\rA}\bu(s)+\int_{s}^t e^{-(t-r)\rA} \mb(\bu(r), \bu(r)) dr + \int_s^t e^{-(t-r)\rA} G(\bu(r)) dW(r), \text{ $\mathbb{P}$-a.s}. 
			\end{align*}
		\end{rem}
		{ This remark is used later}  to prove a very important lemma for our analysis, see Lemma \ref{LEM-Essence}.
		
		Next, we state and give a short proof of the following results.}
	\begin{prop}\label{SHELL}
		If the assumptions \ref{NB-a} to \ref{NB-c} hold and \ref{nonlin-g} is satisfied with $\ve_\frac14$ replaced by $\h$  and $\bu_0\in \el^2(\Omega,\h)$, then the problem~\eqref{spdes} has a unique global mild, which is also a weak, solution $\bu$. {Moreover}, if $\bu_0\in \el^{2p}(\Omega,\h)$ for any real number $p\in [2,8]$, then  there
		exists a constant $\mathcal{C}>0$ such that
		\begin{equation}\label{Eq-9}
		\mathbb{E} \sup_{t\in [0,T]}\lvert \bu(t)\rvert^{2p} +\mathbb{E} \int_0^T \lvert \bu(s)\rvert^{2p-2} \lvert
		\mathrm{A}^\frac12 \bu(s)\rvert^2 ds\le \mathcal{C}(1+\mathbb{E} \lvert \bu_0\rvert^{2p}),
		\end{equation}
		and 
		\begin{equation}\label{Eq-9-B}
		\mathbb{E} \left(\int_0^T\lvert
		\mathrm{A}^\frac12 \bu(s)\rvert^2 ds\right)^p \le \mathcal{C}(1+\mathbb{E} \lvert \bu_0\rvert^{2p}).
		\end{equation}
		If, in addition, Assumption \ref{nonlin-g} is satisfied and  $\bu_0 \in \el^{p}(\Omega, \ve_{\frac14})$ with $p\in [2,8]$,  then there exists a constant $C>0$ such that
		\begin{equation}\label{Eq-10}
		\mathbb{E}\sup_{t\in [0,T]} \Vert \bu(t)\Vert_{\frac14}^{p} + \EE\left(\int_{0}^{T} \Vert \bu(s)\Vert_{\frac34}^2ds\right)^p \le C(1+\EE \Vert \bu_0\Vert^{p}_{\frac14}+ (\mathbb{E} \lvert \bu_0\rvert^{2p})^2).
		\end{equation}
	\end{prop}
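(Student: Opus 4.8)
The plan is to prove the four assertions in order, since each a priori bound feeds into the next.

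\emph{Existence and uniqueness.} I would obtain the unique mild solution from the abstract fixed point scheme of \cite{BHR-14} applied to the Duhamel formulation \eqref{Eq-REM}. The inputs needed are the analyticity bound $\Vert\rA^{\alpha}e^{-t\rA}\Vert\le Ct^{-\alpha}$, the bilinear estimate \eqref{Eq-nonlin-B-2} (with $\theta,\gamma$ chosen so that the resulting singular kernel is integrable), and the globally Lipschitz/linear growth property of $G$ recorded in part~\ref{REM-G_i} of Remark~\ref{REM-G}. These make the mild map a contraction on a ball of $C([0,T_0];\h)\cap\el^2(0,T_0;\ve)$ for $T_0$ small, giving a unique local solution; the difference estimate \eqref{Eq-nonlin-B} and Gronwall's lemma give uniqueness and the equivalence with the weak formulation, while globalisation to $[0,T]$ follows once \eqref{Eq-9} is available (the coercivity \ref{NB-b} is what prevents blow-up).

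\emph{The bounds \eqref{Eq-9} and \eqref{Eq-9-B}.} For \eqref{Eq-9} I would apply It\^o's formula to $|\bu(t)|^{2p}$, justified on the Galerkin level or after the stopping time $\tau_R=\inf\{t:|\bu(t)|\ge R\}$ with a limiting argument. The drift contributes $-2p|\bu|^{2p-2}\langle\rA\bu+\mb(\bu,\bu),\bu\rangle\le-2p|\bu|^{2p-2}\Vert\bu\Vert_{1/2}^2$ by the coercivity \ref{NB-b} with $\bv=\bu$, producing exactly the weighted dissipation on the left of \eqref{Eq-9}; the It\^o correction is controlled by $C|\bu|^{2p-2}(1+|\bu|^2)$ using the linear growth of $G$. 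Taking $\sup_t$ and $\EE$, the martingale is handled by Burkholder--Davis--Gundy and Young's inequality, absorbing a fraction of $\EE\sup_t|\bu|^{2p}$, and Gronwall closes the estimate. Then \eqref{Eq-9-B} is read off the same energy identity at $p=1$: isolating $2\int_0^T\Vert\bu\Vert_{1/2}^2$, raising to the power $p$, and using BDG and the linear growth, each term is dominated by $1+\EE\sup_t|\bu|^{2p}$, already bounded by \eqref{Eq-9}.

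\emph{The regularity bound \eqref{Eq-10}.} Applying It\^o to $\Vert\bu\Vert_{1/4}^2=|\rA^{1/4}\bu|^2$ gives the dissipation $2\int_0^t\Vert\bu\Vert_{3/4}^2$ (since $((\bu,\rA\bu))_{1/4}=\Vert\bu\Vert_{3/4}^2$), a noise term bounded by $C(1+\Vert\bu\Vert_{1/4}^2)$ via part~\ref{REM-G-ii} of Remark~\ref{REM-G}, and the critical term $((\bu,\mb(\bu,\bu)))_{1/4}=\langle\rA^{1/2}\bu,\mb(\bu,\bu)\rangle$. By duality and \eqref{Eq-nonlin-B-2} with $\theta=\gamma=\tfrac14$ this is $\le C\Vert\bu\Vert_{3/4}\,|\bu|\,\Vert\bu\Vert_{1/4}$. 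The hard part is that the naive split $\le\eps\Vert\bu\Vert_{3/4}^2+C_\eps|\bu|^2\Vert\bu\Vert_{1/4}^2$ leaves a \emph{random} coefficient $|\bu|^2$ multiplying $\Vert\bu\Vert_{1/4}^2$, so a direct Gronwall argument would require $\EE\exp(C\int_0^T|\bu|^2)$, which is not available for multiplicative noise. To circumvent this I would bound $\sup_t\Vert\bu\Vert_{1/4}^p$ from the mild formulation instead: applying $\rA^{1/4}$ to \eqref{Eq-REM} and using $\Vert\rA^{1/4}e^{-(t-r)\rA}\Vert\le C(t-r)^{-1/4}$ together with \eqref{Eq-nonlin-B-2} for $\theta+\gamma=\tfrac12$, the deterministic convolution of $\mb$ is bounded by $C\int_0^t(t-r)^{-(1/4+\theta)}|\bu(r)|\,\Vert\bu(r)\Vert_{1/2}\,dr$, and a H\"older estimate in $r$ gives $C\sup_r|\bu(r)|\,(\int_0^T\Vert\bu\Vert_{1/2}^2\,dr)^{1/2}$, i.e. a genuine forcing with no implicit $\Vert\bu\Vert_{1/4}$. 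Taking $p$-th moments and splitting by Cauchy--Schwarz the supremum energy and the time-integrated energy, controlled respectively by \eqref{Eq-9} and \eqref{Eq-9-B}, is what yields the quadratic term $(\EE|\bu_0|^{2p})^2$; the range $p\in[2,8]$ is precisely what makes these two a priori bounds applicable. The stochastic convolution is treated by a maximal (factorisation) inequality; since Assumption~\ref{nonlin-g} already places $G(\bu)$ in $\ve_{1/4}$, no extra smoothing is needed and its contribution reduces to $\int_0^T(1+\Vert\bu\Vert_{1/4}^2)\,dr$, whose implicit dependence is absorbed. Finally the left-hand term $\EE(\int_0^T\Vert\bu\Vert_{3/4}^2)^p$ is recovered from the It\^o energy identity once $\EE\sup_t\Vert\bu\Vert_{1/4}^p$ is controlled.
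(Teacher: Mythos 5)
Your proposal is correct, and for the existence part and the estimates \eqref{Eq-9}, \eqref{Eq-9-B} it follows essentially the paper's own route (the fixed point theorem of \cite{BHR-14} applied through \eqref{Local-Lip} and \eqref{Lin-Cont}, then It\^o's formula for $\lvert\bu\rvert^{2p}$ with the coercivity \ref{NB-b}, Burkholder--Davis--Gundy, Young and Gronwall along the stopping times $\tau_j$). Where you genuinely diverge is \eqref{Eq-10}. The paper stays with It\^o's formula, applied to $\varphi(\bu)=\lVert\bu\rVert_{\frac14}^2$ and justified by Krylov's generalized It\^o formula, and -- this is the step you believed was blocked -- it estimates the critical term as $\lvert\langle \rA^{\frac12}\bu,\mb(\bu,\bu)\rangle\rvert \le \frac12\lVert\bu\rVert_{\frac34}^2 + C\lVert\bu\rVert_{\frac12-\gamma}^2\lVert\bu\rVert_{\gamma}^2 \le \frac12\lVert\bu\rVert_{\frac34}^2 + C\lvert\bu\rvert^{2}\lVert\bu\rVert_{\frac12}^{2}$, using \ref{Assum-B1p} with $\theta=0$ together with complex interpolation: by paying with the full dissipation $\lVert\bu\rVert_{\frac34}^2$, the residual carries $\lVert\bu\rVert_{\frac12}^2$ rather than $\lVert\bu\rVert_{\frac14}^2$, so it is a genuine forcing term $\sup_t\lvert\bu\rvert^2\int_0^T\lVert\bu\rVert_{\frac12}^2\,ds$ and no exponential moment or stochastic Gronwall lemma is ever needed; the paper then pairs this forcing by Cauchy--Schwarz against \eqref{Eq-9} and \eqref{Eq-9-B}, which is exactly where the quadratic term $(\EE\lvert\bu_0\rvert^{2p})^2$ originates -- the same mechanism as in your argument. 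So your motivating obstruction only affects the naive splitting, not the It\^o route as such; nevertheless your Duhamel detour is a valid alternative: with $\theta\in(0,\frac14)$ and $\gamma=\frac12-\theta$ in \eqref{Eq-nonlin-B-2}, the kernel $(t-r)^{-(\frac14+\theta)}$ is square-integrable and your convolution bound $C\sup_r\lvert\bu(r)\rvert\,\bigl(\int_0^T\lVert\bu\rVert_{\frac12}^2\,dr\bigr)^{\frac12}$ reproduces the same forcing, trading Krylov's It\^o formula for a maximal inequality for stochastic convolutions (which applies here, since by \ref{nonlin-g} one even has $\lVert G(\bu)\rVert_{\mathscr{L}(\mathscr{H},\ve_{\frac14})}\le \lVert G(0)\rVert_{\mathscr{L}(\mathscr{H},\ve_{\frac14})}+C_1\lvert\bu\rvert$, linear growth in $\lvert\bu\rvert$ alone, so its contribution is controlled by \eqref{Eq-9} directly, with no absorption required). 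Two caveats: you cannot avoid the energy identity entirely, since recovering the term $\EE\bigl(\int_0^T\lVert\bu\rVert_{\frac34}^2\,ds\bigr)^{p/2}$ still requires It\^o (which you acknowledge; note the exponent $p$ in the statement versus $p/2$ in the paper's own proof is a discrepancy internal to the paper, not introduced by you), and your phrase ``whose implicit dependence is absorbed'' should be replaced by the explicit interpolation $\lVert\bu\rVert_{\frac14}^2\le\lvert\bu\rvert\,\lVert\bu\rVert_{\frac12}$ followed by Cauchy--Schwarz against \eqref{Eq-9} and \eqref{Eq-9-B}, which is how the paper closes the analogous term without any Gronwall circularity.
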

	\begin{proof}
		Let us first prove the existence of a local mild solution. For this purpose,  we study the properties {of} $\mb$ in order to apply a contraction principle as in \cite[Theorem 3.15]{BHR-14}.
		Let $\mb(\cdot)$ be the mapping defined by $\mb(\bx)=\mb(\bx,\bx)$ for any $\bx \in \ve_\beta$. Let {$\beta \in (0,\frac12)$}. Using  Assumptions \ref{NB-a} with $\theta= \frac12-\beta$, $\gamma=\beta$, we derive that
		\begin{equation}\label{Local-Lip}
		\Vert \mb(\bx)-\mb(\by)\Vert_{{\beta-\frac12}} \le  C_0 \vert \bx -\by \vert (\Vert \bx \Vert_\beta + \Vert \by\Vert_\beta)+ C\Vert \bx -\by \Vert_\beta (\vert \bx \vert +\vert \by \vert),
		\end{equation}
		for any $\bx, \by \in \ve_\beta$. Since, by \cite[Theorem 1.18.10, pp 141]{Triebel}, $\ve_\beta$ {coincides} with the complex interpolation $[\h, D(\rA^\frac12)]_{2\beta}$, we infer from the interpolation inequality \cite[Theorem 1.9.3, pp 59]{Triebel} and 
		\eqref{Local-Lip} that 
		\begin{equation}\label{Loc-Lip-2}
		\Vert \mb(\bx)-\mb(\by)\Vert_{{\beta-\frac12}} \le  C_0 \vert \bx -\by \vert (\vert \bx \vert^{1-2\beta}\Vert \bx \Vert_\frac12^{2\beta} + \vert \by \vert^{1-2\beta}\Vert \by\Vert_\frac12^{2\beta})
		+ C\Vert \bx -\by \Vert_\frac12^{{2\beta}}\vert \bx-\by \vert^{1-2\beta} (\vert \bx \vert +\vert \by \vert),
		\end{equation}
		for any $\bx, \by \in \ve$. Now, we denote by $X_T$ the Banach space  $C([0,T];\h)\cap \el^2(0,T;\ve)$ endowed with the norm 
		$$ \Vert \bx \Vert_{X_T}=\sup_{t\in [0,T]}\vert \bx(t)\vert+\biggl(\int_0^T \Vert \bx(t)\Vert^2_\frac12 dt \biggr)^\frac12.$$  We recall  the 
		following classical result, see \cite[Theorem 3, pp 520]{RD+JL}.
		\begin{equation}\label{Lin-Cont}
		\text{ The linear map } \Lambda : \el^2(0,T;\ve^\ast)\ni f\mapsto \bx(\cdot)=\int_0^{\cdot} e^{-(\cdot-r)\rA}f(r)dr\in X_T \text{ is continuous.}
		\end{equation}
		Thus, thanks to \eqref{Loc-Lip-2}, \eqref{Lin-Cont} and Assumption \ref{nonlin-g} we can apply \cite[Theorem 3.15]{BHR-14} to infer the existence of a unique local mild solution $\bu$ with lifespan $\tau$ of \eqref{spdes} (we refer to \cite[Definition 3.1]{BHR-14} for the definition of local solution). Let $\{\tau_j; \; j\in \mathbb{N}\}$ be an increasing sequence of stopping times converging almost surely to the lifespan $\tau$. Using the equivalence lemma in  \cite[Proposition 6.5]{DP+JZ-92} we can easily prove that the local mild solution is also a local weak solution satisfying \eqref{Weak} with $t$ replaced by $t\wedge \tau_j$, $j\in \mathbb{N}$. Now, we can prove by arguing as in \cite[Appendix A]{ZB+EM} or \cite[Proof of Theorem 4.4]{Caraballo} that the local solution $\bu$ satisfies \eqref{Eq-9} uniformly w.r.t. $j \in \mathbb{N}$. With this observation along with an argument similar to \cite[Proof of Theorem 2.10]{BHR-14} we conclude that \eqref{spdes} admits a global solution (\textit{i.e.}, $\tau=T$ a.s.) $\bu$ satisfying \eqref{Eq-9} and $\bu \in X_T$ almost-surely. 
		
		As mentioned earlier the proof follows a similar argument as in \cite[Appendix A]{ZB+EM}, but for the sake of completeness we sketch the proof of \eqref{Eq-9}.  We apply It\^o's formula first to $|\cdot|$ and the process $\bu(\cdot\wedge\tau_j)$ and then to the map $x\to x^{p}\quad p\geq 2$ and the process $|\bu(\cdot\wedge\tau_j)|^2$. Then, using the assumption {\bf (B2)} and {\bf (G)} we infer that there exists a constant $\mathcal{C}>0$ such that for any $j \in \mathbb{N}$
		%
		\begin{align*} 
		& \sup_{t\in [0,T]}\lvert \bu(t\wedge\tau_j))\rvert^{2p} +\int_0^T \lvert \bu(s)\rvert^{2p-2} \lvert
		\mathrm{A}^\frac12 \bu(s)\rvert^2 ds\le \mathcal{C}\mathbb{E} \lvert \bu_0\rvert^{2p}	
		\\
		&+ \mathcal{C} \int_0^T \lvert \bu(s\wedge\tau_j)\rvert^{2p-2} \lvert (1+  \lvert \bu(s\wedge\tau_j)\rvert^{2} )ds
		\\
		&+2p \sup_{t\in [0,T]} \int_{0}^{t\wedge\tau_j}|\bu(s)|^{2p-2}{\langle \bu(s), G(\bu(s))dW(s)\rangle }.
		\end{align*}
		Using the Burkholder-Holder-Davis inequality we deduce that
		\begin{align*} 
		\mathbb{E}   \sup_{t\in [0,T]} \int_{0}^{t\wedge\tau_j}|\bu(s)|^{2p-2}\langle G(\bu(s)), \bu(s)\rangle dW(s) \leq
		&\mathbb{E} \left( \int_{0}^{T}(|\bu(s\wedge\tau_j)|^{4p}{ds}\right)^{1/2}
		\\
		+&\mathbb{E}\left( \int_{0}^{T}(|\bu(s\wedge\tau_j)|^{4p-2}{ds}\right)^{1/2}.
		\end{align*}   
		Using Young's inequality, we infer that for any $\epsilon \in (0,\frac12)$ there exists a constant $C(\epsilon)>0$ such that
		\begin{align*}
		\mathbb{E} \left(\int_{0}^{T}(|\bu(s\wedge\tau_j)|^{4p}{ds}\right)^{1/2}
		\leq \epsilon \mathbb{E}\sup_{t\in [0,T]}\lvert \bu(t\wedge\tau_j))\rvert^{2p} +
		C(\epsilon) \mathbb{E}\int_{0}^{T}\sup_{s\in [0,t]}|\bu(s\wedge\tau_j)|^{2p}dt.
		\end{align*}
		For the second integral, we need to use  H\"older's inequality and then Young's inequality and the previous calculations 
		\begin{align*}
		\mathbb{E}\left( \int_{0}^{T}(|\bu(s\wedge\tau_j)|^{4p-2}{ds}\right)^{1/2}
		\leq \epsilon \mathbb{E}\sup_{t\in [0,T]}\lvert \bu(t\wedge\tau_j))\rvert^{2p} +
		C(\epsilon) \mathbb{E}\int_{0}^{T}\sup_{s\in [0,t]}|\bu(s\wedge\tau_j)|^{2p}dt+ T^{\frac{1}{2p}}.     
		\end{align*}
		Now collecting all the estimates we get that
		\begin{align*} 
		(1-2\epsilon)\mathbb{E} \sup_{t\in [0,T]}\lvert \bu(t\wedge\tau_j))\rvert^{2p} +\int_0^T \mathbb{E}\lvert \bu(s)\rvert^{2p-2} \lvert
		\mathrm{A}^\frac12 \bu(s)\rvert^2 ds\le &\mathcal{C}(1+\mathbb{E} \lvert \bu_0\rvert^{2p})
		\\
		&+
		\mathcal{C}\mathbb{E}\int_{0}^{T}\sup_{s\in [0,t]}|\bu(s\wedge\tau_j)|^{2p}dt.
		\end{align*} 
		Now, choosing $\epsilon=\frac14$, applying Gronwall's lemma and passing to the limit  as $j \to \infty$ complete the proof of \eqref{Eq-9}. 
		The estimate \eqref{Eq-9-B} easily follows from \eqref{Eq-9}, so we omit its proof.

		We shall now prove the inequality \eqref{Eq-10}. 
		%
		To start with we will  apply It\^o's formula to $\varphi(\bu)=\Vert \bu \Vert_{\frac14}^2$. Note that thanks to the estimates \eqref{Eq-9} and \eqref{Eq-9-B}, Assumptions \ref{NB-a} and \ref{nonlin-g} we readily check that there exists a constant $C>0$ such that $$ \mathbb{E} \int_0^T\biggl[\lVert  A\bu+B(\bu,\bu) \rVert^2_{-\frac12 } + \lVert G(\bu)\rVert^2_{\mathscr{L}(\mathscr{H}, \ve_\frac14 ) }\biggr](t)dt \le C . $$ 
		Hence the general It\^o's formula in \cite[Section 3]{Krylov-2013} is applicable to \eqref{spdes} and the functional $\varphi(\bu)(t)=\Vert \bu(t) \Vert^2_\frac14$. Thus, 
		an application of It\^o's formula to the functional $\varphi(\bu)(t\wedge \tau_j)=\Vert \bu(t\wedge \tau_j) \Vert^2_\frac14$ {gives}
		\begin{equation*}
		\varphi(\bu(t\wedge \tau_j))= \varphi(\bu(0))+ \int_{0}^{t\wedge \tau_j }\varphi\prime(\bu(s))d\bu(s)+ \frac12 \int_{0}^{t\wedge \tau_j }\trace\left(\varphi^{\prime \prime}(\bu(s)) G(\bu(s))Q(G\bu(s))^\ast\right)ds,
		\end{equation*}
		which along with the inequality $ \frac12 \Vert \phi^{\prime \prime }(\bu)\Vert\le 1$, where the norm is understood as the norm of a bilinear map, implies  
		
		\begin{equation}
		\begin{split}
		\Vert \bu(t\wedge \tau_j )\Vert^2_{\frac14}
		&+ 2\int_0^{t\wedge \tau_j}\left( \Vert \bu(s)\Vert^2_{\frac34} +2 \langle \rA^{\frac12}\bu(s), \mb(\bu(s), \bu(s))\rangle\right){ds} 
		\\
		&\le \lVert \bu_0 \rVert^2_\frac14 + 2
		\int_0^{t \wedge \tau_j }{\langle \rA^{\frac12}\bu(s), G(\bu(s)) dW(s)\rangle}+ C\trace{Q} \int_0^{t \wedge \tau_j }\Vert G(\bu(s))\Vert^2_{\cL(\mathscr{H}, \ve_\frac14)}ds .\label{Eq-11}
		\end{split}
		\end{equation}  
		Since the embedding $\ve_{\frac12+\alpha}\subset \ve_{2\alpha}$ is continuous for any $\alpha \in[0,\frac12]$, we can use Assumptions  \ref{Assum-B1p}  and the Cauchy inequality to infer that 
		\begin{equation*}
		\begin{split}
		\bigg\lvert \int_0^{t\wedge \tau_j} \langle \rA^{\frac12}\bu(s), \mb(\bu(s),\bu(s) )\rangle ds\bigg\lvert &\le C \int_0^{t\wedge \tau_j} \Vert \bu(s)\Vert_{\frac12} \vert  \mb(\bu(s), \bu(s))\vert ds,\\
		&\le \frac12 \int_0^{t\wedge \tau_j} \Vert \bu(s)\Vert_{\frac34}^2 ds +C \int_0^{t\wedge \tau_j} \Vert \bu(s)\Vert_{\frac12-\gamma}^2 \Vert \bu(s)\Vert_{\gamma}^2 ds,
		\end{split}
		\end{equation*}
		for some $\gamma\in (0,\frac12)$.  
		From an application of a complex interpolation inequality, see \cite[Theorem 1.9.3, pp 59]{Triebel},  we infer that  
		\begin{align*}
		\bigg\lvert \int_0^T \langle \rA^{\frac12}\bu(s), \mb(\bu(s),\bu(s) )\rangle ds\bigg\lvert \le& \frac12 \int_0^T \Vert \bu(s)\Vert_{\frac34}^2 ds + \int_0^T \vert \bu(s)\vert^{2}\Vert \bu(s)\Vert^{2}_\frac12 ds.
		\end{align*}
		%
		%
		Plugging the latter inequality into \eqref{Eq-11}, using the assumption on $G$ we obtain 
		\begin{equation}
		\begin{split}
		\Vert \bu(t\wedge \tau_j)\Vert^2_{\frac14}+\frac32 \int_0^{t\wedge \tau_j} \Vert \bu(s)\Vert^2_{\frac34} ds \le \Vert \bu(0)\Vert^2_{\frac14}+  C\sup_{s\in [0,T]}\vert \bu(s)\vert^2 \int_0^T\Vert \bu(s)\Vert^2_\frac12 ds\\ + CT + C\int_{0}^{T} \Vert \bu(s)\Vert^2_{\frac14} ds + 2\biggl\vert\int_0^{t\wedge \tau_j}
		{\langle \rA^{\frac14}\bu(s), \rA^\frac14 G(\bu(s)) dW(s)\rangle }\biggr\vert.
		\end{split}
		\end{equation}
		Taking the supremum over $t\in [0,T]$, then raising both sides of the resulting inequality to the power $p/2$,  taking the mathematical expectation, and finally using the Burkholder-Davis-Gundy inequality yield
		\begin{equation}\label{2-18}
		\begin{split}
		\EE\sup_{s\in [0,t]}\Vert \bu(s\wedge \tau_j)\Vert^{p}_{\frac14}+2\EE\left( \int_0^{t\wedge \tau_j} \Vert \bu(s)\Vert^2_{\frac34} ds\right)^{p/2}-\left(	C\EE \Vert \bu(0)\Vert^{p}_{\frac14}+ CT+C\EE\left[\int_0^{t\wedge \tau_j} \Vert \bu(s)\Vert^2_{\frac14}ds\right]^\frac p2\right) \\
		\quad \quad	\le  C \left(\EE\sup_{s\in [0,T]} \vert \bu(s)\vert^{2p}\right)^\frac12\biggl[ \EE \left(\int_{0}^{T} \Vert \bu(s)\Vert^2_{\frac12}  ds\right)^{p}\biggl]^\frac12 \\
		+ 2C \EE\biggl(\int_0^{t\wedge \tau_j}
		\vert \rA^{\frac14}\bu(s)\vert^2\Vert G(\bu(s))\Vert^2_{\cL(\mathscr{H}, \ve_\frac14)} ds\biggr)^\frac p4.
		\end{split}
		\end{equation}
		Here we have used the fact that for any integer $\ell$ and $n$ we can find a constant $C_{\ell,n}$ such that 
		\begin{equation}\label{Binom}
		\sum_{i=1}^n a_i^\ell \le \left(\sum_{i=1}^n a_i \right)^{\ell}\le C_{\ell,n} \sum_{i=1}^n a_i^\ell 
		\end{equation}
		for a sequence of non-negative numbers $\{a_i;\;  i=1,2,\ldots, n \}$. 
		
		Using the assumptions on $G$ and Young's inequality we infer that there exists a constant $C>0$ such that for any $j\in \mathbb{N}$
		\begin{equation*}
		\EE\biggl(\int_0^{t\wedge \tau_j}
		\vert \rA^{\frac14}\bu(s)\vert^2\Vert G(\bu(s))\Vert^2_{\cL(\mathscr{H}, \ve_\frac14)} ds\biggr)^\frac p4\le \frac14 \EE\sup_{s \in [0,t]}\lVert \bu (s\wedge \tau_j)\rVert^p_\frac14 + C \EE \left[\int_0^{t\wedge \tau_j} \lVert \bu(s) \rVert^2_\frac14  ds \right]^\frac p2 + C T,
		\end{equation*}
		which along with \eqref{2-18}, \eqref{Eq-9} and \eqref{Eq-9-B} implies
		\begin{equation*}
		\begin{split}
		\EE\sup_{s\in [0,t]}\Vert \bu(s\wedge \tau_j)\Vert^{p}_{\frac14}+2\EE\left( \int_0^{t\wedge \tau_j} \Vert \bu(s)\Vert^2_{\frac34} ds\right)^\frac p2 \le &\EE \Vert \bu(0)\Vert^{p}_{\frac14}+ \mathcal{C}^2(1+\mathbb{E} \lvert \bu_0\rvert^{2p})^2
		\\
		&+ CT+\EE\left[\int_0^{t\wedge \tau_j} \Vert \bu(s)\Vert^{2}_{\frac14}ds\right]^\frac p2.
		\end{split}
		\end{equation*}
		Now,  we infer from the interpolation inequality \cite[Theorem 1.9.3, pp 59]{Triebel}, \eqref{Eq-9} and \eqref{Eq-9-B} that there exists a constant $C>0$ such that for any $j \in \mathbb{N}$
		\begin{align*}
		\EE\left[\int_0^{t\wedge \tau_j} \Vert \bu(s)\Vert^{2}_{\frac14}ds\right]^\frac p2\le T^\frac p2\EE\left( \sup_{s\in [0,T]}\lvert \bu(s)\rvert^\frac p2 \left[ \int_0^T \lVert \bu(s)\rVert^2ds \right]^\frac p4 \right)
		\\
		\le C T.
		\end{align*}	 		
		Hence, 
		\begin{equation*}
		\EE\sup_{s\in [0,t]}\Vert \bu(s\wedge \tau_j)\Vert^{p}_{\frac14} \le 	C_T (1+\EE \Vert \bu(0)\Vert^{p}_{\frac14}+ (\mathbb{E} \lvert \bu_0\rvert^{2p})^2),
		\end{equation*}						 		
		from 	which along with a passage to the limit we readily complete the proof of {the}  proposition.
		
	\end{proof}

	\subsection{The numerical scheme and the main result} \label{Sec:Stable}
	Let $N$ be a positive integer, $\h_N\subset\h$ the linear space spanned by $\{\psi_n; \; n=1,\ldots, N\}$, and $\pi_N: \h\rightarrow \h_N$ the orthogonal projection of $\h$ onto the finite dimensional subspace $\h_N$. The projection of $\bu$ by $\pi_N$ is denoted by
	\begin{equation}
	\bu^N:= \pi_N\bu = \sum_{n=1}^N (\psi_n,\bu)\psi_n,
	\end{equation}
	for $\bu\in \h$.
	{The Galerkin approximation} of the SPDEs \eqref{spdes} reads
	\begin{equation}
	\label{semi-discretized_SDEs}
	d\bu^N = [\pi_N \rA\bu^N + \pi_N \mb(\bu^N,\bu^N)]dt + \pi_N G(\bu^N)dW(t), \quad \bu^N(0)= \pi_N \bu_0.
	\end{equation}
	Due to the assumptions \ref{NB-a}-\ref{NB-c} and \ref{nonlin-g}, we can use Proposition \ref{SHELL} to prove that \eqref{semi-discretized_SDEs} has a global weak solution. 
	
	To derive an approximation of the exact solution $\bu$ of \eqref{spdes} we construct an approximation $\uj$ of the Galerkin solution $\bu^N$. To this end, 	let $M$ be a positive integer and $I_M = \left([t_m, t_{m+1}]\right)_{m=0}^{M}$ an equidistant grid of mesh-size $k=t_{m+1}-t_m$ covering $[0,T]$.
	Now,  for any $j\in \{0,\ldots, M-1\}$  we look for a sequence of $\FF$-adapted random variables $\uj\in \h_N$, $j=0,1,\ldots, M$ such that for any $w\in \ve$ 
	
	\begin{equation}\label{Algo}
	\left\{
	\begin{split}
	&\ru^0= \pi_N \bu_0,
	\\
	& \langle \ujp-\uj+k [\pi_N \rA \ujp +\pi_N \mb(\uj, \ujp)], \;  w \rangle= \langle w,  \pi_N G(\uj)\Delta_{j+1}W\rangle,
	\end{split}
	\right.
	\end{equation}
	where $\Delta_{j+1} W:=W(t_{j+1})-W(t_{j})$, $j\in \{0,\ldots, M-1\}$, is an independent and identically distributed random variables.
	We will justify in the following proposition that for a given $U_0=\pi_N\bu_0$ the numerical scheme \eqref{Algo} admits at least one solution $U^{j}\in\h_N$,  $j\in \{1,\ldots M\}$ and that \eqref{Algo} is stable in $\h$ and $D(\rA^\frac14)$.
	\begin{prop}\label{PROP-STAB}
		Let the assumptions \ref{NB-a}-\ref{NB-c} and  \ref{nonlin-g} hold.
		Let $N$ and $M$ be two fixed positive integers and $\bu_0 \in \el^{2^p}(\Omega;\h)$ for any integer $p\in [2,4]$. Then, for any $j\in \{1, \ldots, M\}$ there exists at least a $\mathcal{F}_{t_{j}}$-measurable random variable $\uj\in \h_N$ satisfying \eqref{Algo}.
		Moreover, there exists a constant $C>0$ ({depending} only on $T$ and $\trace Q$ ) such that 
		\begin{align}
		& \EE \max_{0\le m\le M} \vert \ru^m \vert^{2}+ \sum_{j=0}^{M-1} \vert \ujp -\uj \vert^2 + 2 k \EE  \sum_{j=1}^{M} \Vert \uj \Vert^2_\frac12  \le C(\EE \vert \bu_0\vert^{2}+1),\label{EQ:Stabl1} 
		\\
		& \EE \biggl[\max_{1\le m\le M} \lvert \ru^m \rvert^{2^p} +k {\sum_{j=1}^M \lvert \ru^j\rvert^{2^{p-1}}  \rVert \ru^j \lVert^2}_\frac12 \biggr] \le C(1+ \EE \lvert \bu_0 \rvert^{2^{p-1}}),\label{STAB-l2-A} 
		\\
		& \EE\biggl[k {\sum_{j=1}^{M} \lVert \ru^j\rVert^2}_\frac12  \biggr]^{2^{p-1}}\le C(1+ \EE \lvert \bu_0\rvert^{2^p}).\label{STAB-l2-B}
		\end{align}

		Furthermore, if $\bu_0\in \el^8(\Omega, D(\rA^\frac14))$, then there exists a constant $C>0$ such that 
		\begin{align}
		\EE\max_{1\le m \le M}\Vert \ru^m \Vert^{2}_\frac14+ \EE\sum_{j=0}^{M-1} \Vert \ujp -\uj \Vert^2_\frac14 + k\EE \sum_{j=1}^M \Vert \uj \Vert^2_{\frac34} \le C ,\label{STAB-DA-1}
		\\
		\EE\max_{1\le m \le M}\Vert \ru^m \Vert^{4}_\frac14+ \EE\biggl(\sum_{j=0}^{M-1} \Vert \ujp -\uj \Vert^2_\frac14 \biggr)^2+ k^2 \EE \biggl(\sum_{j=1}^M \Vert \uj \Vert^2_{\frac34}\biggr)^2 \le C \label{STAB-DA-2}
		\end{align}
	\end{prop}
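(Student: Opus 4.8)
The proof splits into the four groups of claims, which I would establish in the order in which they are stated. For the existence of $\ujp$ from $\uj$, I regard $b:=\uj+\pi_N G(\uj)\Delta_{j+1}W\in\h_N$ as a fixed (now $\mathcal{F}_{t_{j+1}}$-measurable) datum and look for a zero in $\h_N$ of the continuous map $P(v):=(I+k\pi_N\rA)v+k\pi_N\mb(\uj,v)-b$. By Assumption \ref{NB-b}, $\langle\rA v+\mb(\uj,v),v\rangle\ge\Vert v\Vert_{\frac12}^2$, so that $\langle P(v),v\rangle\ge|v|^2+k\Vert v\Vert_{\frac12}^2-|b|\,|v|$, which is strictly positive on the sphere $\{|v|=R\}$ as soon as $R>|b|$. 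The standard corollary of Brouwer's fixed point theorem then furnishes a solution in $\{|v|\le R\}$, the continuity of $v\mapsto\mb(\uj,v)$ on the finite-dimensional space $\h_N$ being a consequence of the local Lipschitz bound furnished by Assumption \ref{NB-a}; the $\mathcal{F}_{t_j}$-measurability in $\omega$ follows from a measurable selection theorem.

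To prove \eqref{EQ:Stabl1} I would test \eqref{Algo} with $w=\ujp$. The identity $2\langle a-b,a\rangle=|a|^2-|b|^2+|a-b|^2$ converts the discrete time-derivative into $\tfrac12(|\ujp|^2-|\uj|^2+|\ujp-\uj|^2)$, while Assumption \ref{NB-b} yields $\langle\rA\ujp+\mb(\uj,\ujp),\ujp\rangle\ge\Vert\ujp\Vert_{\frac12}^2$. On the right-hand side I would write $\langle\ujp,\pi_N G(\uj)\Delta_{j+1}W\rangle=\langle\uj,\pi_N G(\uj)\Delta_{j+1}W\rangle+\langle\ujp-\uj,\pi_N G(\uj)\Delta_{j+1}W\rangle$: the first term is a conditionally centred martingale increment and the second is absorbed by Young's inequality into $\tfrac14|\ujp-\uj|^2+|\pi_N G(\uj)\Delta_{j+1}W|^2$. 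Summing over $j$ telescopes the energy; after taking the running maximum in $m$ and the expectation I would control the resulting discrete martingale by the Burkholder--Davis--Gundy inequality and the correction term through $\EE[\,|\pi_N G(\uj)\Delta_{j+1}W|^2\mid\mathcal{F}_{t_j}]=k\Vert G(\uj)Q^{\frac12}\Vert_{\mathscr{L}_2(\mathscr{H},\h)}^2$, Assumption \ref{nonlin-g} and the linear growth of $G$, and then conclude with the discrete Gronwall lemma.

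The higher moments \eqref{STAB-l2-A}--\eqref{STAB-l2-B} I would obtain by induction on $p$, the base case being \eqref{EQ:Stabl1}. Starting from the per-step inequality $|\ujp|^2-|\uj|^2+\tfrac12|\ujp-\uj|^2+2k\Vert\ujp\Vert_{\frac12}^2\le 2\langle\uj,\pi_N G(\uj)\Delta_{j+1}W\rangle+2|\pi_N G(\uj)\Delta_{j+1}W|^2$, I would telescope, raise to the power $2^{p-1}$, take the running maximum and the expectation, and then apply the discrete Burkholder--Davis--Gundy inequality together with \eqref{Binom} and Young's inequality; the moments already established play the role of the induction hypothesis and a discrete Gronwall argument closes each step.

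The delicate part is \eqref{STAB-DA-1}--\eqref{STAB-DA-2}, obtained by testing \eqref{Algo} with $w=\rA^{\frac12}\ujp\in\h_N$. This produces $\langle\ujp-\uj,\rA^{\frac12}\ujp\rangle=\tfrac12(\Vert\ujp\Vert_{\frac14}^2-\Vert\uj\Vert_{\frac14}^2+\Vert\ujp-\uj\Vert_{\frac14}^2)$ and the coercive term $k\Vert\ujp\Vert_{\frac34}^2$ from the linear part. The main obstacle is the nonlinear contribution $k\langle\mb(\uj,\ujp),\rA^{\frac12}\ujp\rangle$: in contrast to the Navier--Stokes setting there is no cancellation (cf. Remark \ref{REM-NEW}), so I must estimate it directly by $\Vert\mb(\uj,\ujp)\Vert_{-\frac14}\Vert\ujp\Vert_{\frac34}$ and, using Assumption \ref{Assum-B1p} with $\theta=\gamma=\frac14$, bound $\Vert\mb(\uj,\ujp)\Vert_{-\frac14}\le C_0|\uj|\,\Vert\ujp\Vert_{\frac14}$. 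Young's inequality then absorbs half of $k\Vert\ujp\Vert_{\frac34}^2$ and leaves a cross term dominated, after summation and the embedding $\ve_{\frac12}\subset\ve_{\frac14}$, by $C\bigl(\max_j|\uj|^2\bigr)\,k\sum_j\Vert\ujp\Vert_{\frac12}^2$. This is exactly where the strong $\h$-estimates \eqref{STAB-l2-A}--\eqref{STAB-l2-B} enter: a Cauchy--Schwarz inequality in $\Omega$ splits the two factors, whose high moments are finite precisely because $\bu_0$ is assumed in $\el^8(\Omega,D(\rA^{\frac14}))$ and in the corresponding $\el^{2^p}(\Omega;\h)$ spaces. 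The noise term is treated as for \eqref{EQ:Stabl1}, but now controlled in $\mathscr{L}(\mathscr{H},\ve_{\frac14})$ by means of Remark \ref{REM-G-ii}. Summing, taking the maximum and expectation, and applying discrete Burkholder--Davis--Gundy and Gronwall yields \eqref{STAB-DA-1}; squaring the telescoped inequality and repeating the argument with the next higher $\h$-moments gives \eqref{STAB-DA-2}.
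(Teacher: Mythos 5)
Your proposal is correct and, in its overall architecture, coincides with the paper's proof: existence via the coercivity corollary of Brouwer's fixed point theorem applied to the residual map (the paper's $\Lambda^j_\omega$ is your $P$ up to bookkeeping), measurability via a measurable selection theorem applied to the closed-graph solution multimap (the paper follows de Bouard--Debussche and invokes \cite[Theorem 3.1]{AB+RT}), the $\h$-estimates \eqref{EQ:Stabl1}--\eqref{STAB-l2-B} by the telescoping/BDG/Gronwall scheme of \cite{Carelli+Prohl} (which the paper omits entirely, so your sketch is consistent with, and more explicit than, the paper), and \eqref{STAB-DA-1}--\eqref{STAB-DA-2} by testing with $\rA^{\frac12}\ujp$, killing the centred martingale increment, and splitting the nonlinear cross term by Cauchy--Schwarz on $\Omega$ against the previously established $\h$-moments. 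The one genuine deviation is your treatment of the nonlinear term: you pair in the $\ve_{-\frac14}$--$\ve_{\frac14}$ duality and apply \ref{Assum-B1p} with $\theta=\gamma=\frac14$ (legitimate, since $\theta+\gamma=\frac12$ is allowed) to get $\Vert\mb(\uj,\ujp)\Vert_{-\frac14}\le C_0\lvert\uj\rvert\,\Vert\ujp\Vert_{\frac14}$, so Young leaves the cross term $k\lvert\uj\rvert^2\Vert\ujp\Vert^2_{\frac14}$; the paper instead pairs in $\h$, bounding $\lvert\mb(\uj,\ujp)\rvert\,\Vert\ujp\Vert_{\frac12}$ via \eqref{Eq-nonlin-B-H2} and interpolation, which produces the weaker per-step bound $Ck\lvert\uj\rvert^4\Vert\ujp\Vert^2_{\frac12}$ (its \eqref{Est-nonlinB-H2}) and therefore forces the Cauchy--Schwarz split \eqref{nonlin-B-Est} to use \eqref{STAB-l2-A}--\eqref{STAB-l2-B} with $p=2$ \emph{and} $p=3$, i.e.\ eighth moments of $\bu_0$. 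Your version is mildly sharper: the split only needs $\EE\max_j\lvert\uj\rvert^4$ and $\EE\bigl(k\sum_j\Vert\uj\Vert^2_{\frac12}\bigr)^2$, i.e.\ the $p=2$ case alone. A second, minor difference: for the $\EE\max_m\Vert\ru^m\Vert^4_{\frac14}$ part of \eqref{STAB-DA-2} the paper runs a discrete It\^o-type argument, multiplying the one-step inequality by $\Vert\ujp\Vert^2_{\frac14}$ and reusing the identity \eqref{ID-T}, whereas you square the telescoped inequality; with your nonlinear bound this also closes, since the squared nonlinear sum is dominated by $\max_j\lvert\uj\rvert^4\bigl(k\sum_j\Vert\uj\Vert^2_{\frac12}\bigr)^2$, whose expectation is finite by \eqref{STAB-l2-A} and \eqref{STAB-l2-B} at $p=3$ (available since $\bu_0\in\el^{2^p}(\Omega;\h)$ up to $p=4$), and the squared martingale maximum is handled by Doob/BDG plus Gronwall, exactly as you indicate. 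So both local deviations are sound and, if anything, slightly more economical in moment requirements than the paper's route.
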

	\begin{proof}
		The detailed proofs of the existence, measurability and the estimates \eqref{STAB-DA-1} and \eqref{STAB-DA-2} will be given in Section \ref{Sec-Stabl}. Thanks to the assumption \ref{NB-b}, the proof of the inequalities \eqref{EQ:Stabl1}-\eqref{STAB-l2-B} is very similar to the proof of \cite{Carelli+Prohl}, so we omit it.
	\end{proof}
	{ We should note that the estimates \eqref{STAB-DA-1} and \eqref{STAB-DA-2} hold even if $\bu_0\in \el^4(\Omega, D(\rA^\frac14))$, but for the sake of consistency we take $\bu_0\in \el^8(\Omega, D(\rA^\frac14))$.}
	
	Now, we proceed to the statement of the main result of this paper. 
	\begin{theorem}\label{THM-MAIN}
		Let the assumptions \ref{NB-a}-\ref{NB-c} and \ref{nonlin-g} hold and assume that  $\bu_0 \in \el^{16}(\Omega; \h)\cap \el^8(\Omega;\ve_\frac14)$.  Then for any $\beta \in [0,\frac14)$, there exists a constant $\mathrm{k}_0 >0$ such that for any small number $\eps>0$ we have 
		\begin{equation}\label{RATE}
		{\max_{1\le j\le M}} \mathbb{E}\left(\mathbf{1}_{\Omega_k} \lVert \bu(t_j)-\uj \rVert^2_\beta\right)+2k \EE\left(\mathds{1}_{\Omega_k}\sum_{j=1}^M\Vert \bu(t_j)-\uj \Vert^2_{\frac12+\beta}\right)<\mathrm{k}_0 k^{-2\eps} [k^{2(\frac14-\beta)}+\lambda_N^{-2(\frac14-\beta)}],
		\end{equation}
		where the set $\Omega_k$ is defined by 
		$$\Omega_k = \left\lbrace \omega\in \Omega:\sup_{t\in [0,T]}\lVert \bu(t,\omega)\rVert^2_\frac14<\log{k^{-\eps}}, \max_{0\le j\le M} \Vert \uj(\omega )\Vert^2_{\frac14}< \log{k^{-\eps}}\right\rbrace. $$
	\end{theorem}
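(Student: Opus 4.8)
The plan is to carry out a localized discrete energy estimate for the error $\erj:=\bu(t_j)-\uj$ in the $\ve_\beta$-norm and to close it with a discrete Gronwall inequality whose multiplicative constant is exactly what produces the loss $k^{-2\eps}$ in \eqref{RATE}. First I would subtract the scheme \eqref{Algo} from the increment form of the exact solution on $[t_j,t_{j+1}]$ (Remark \ref{REM-Mild} together with the weak formulation \eqref{Weak}) to obtain an error identity of the schematic form $\erjp-\erj+k\rA\erjp+(\text{nonlinear defect})=(\text{noise defect})+(\text{consistency})$, in which $\pi_N$ commutes with $\rA$ since the $\psi_n$ are eigenfunctions. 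Testing this identity in the $\ve_\beta$ scalar product against $\erjp$ and using the polarization identity $2((a-b,a))_\beta=\|a\|_\beta^2-\|b\|_\beta^2+\|a-b\|_\beta^2$ yields on the left the telescoping quantity $\|\erjp\|_\beta^2-\|\erj\|_\beta^2+\|\erjp-\erj\|_\beta^2$ together with the single coercive term $2k\|\erjp\|_{1/2+\beta}^2$ coming from $((\rA\erjp,\erjp))_\beta=\|\erjp\|_{1/2+\beta}^2$. The decisive structural point is that, because $\mathbf{1}_{\Omega_k}$ is not adapted, I would instead multiply the $(j+1)$-st identity by $\mathbf{1}_{A_j}$, where $A_j:=\{\max_{0\le i\le j}\|\ru^i\|_{1/4}^2<\log k^{-\eps}\}\cap\{\sup_{0\le t\le t_j}\|\bu(t)\|_{1/4}^2<\log k^{-\eps}\}$ is $\cF_{t_j}$-measurable, decreasing, and satisfies $\mathbf{1}_{\Omega_k}\le\mathbf{1}_{A_j}\le\mathbf{1}_{A_{j-1}}$; this both legitimizes the conditional-expectation arguments below and lets the final bound descend to $\Omega_k$.

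For the nonlinear defect I would use bilinearity to write $\mb(\bu(t_j),\bu(t_{j+1}))-\mb(\uj,\ujp)=\mb(\erj,\bu(t_{j+1}))+\mb(\uj,\erjp)$ and estimate each pairing by duality $|((f,\erjp))_\beta|=|\langle f,\rA^{2\beta}\erjp\rangle|\le\|f\|_{-\theta}\,\|\erjp\|_{\theta+2\beta}$ with the choice $\theta=\tfrac12-\beta$, so that $\|\erjp\|_{\theta+2\beta}=\|\erjp\|_{1/2+\beta}$ is precisely the coercive norm. Applying \ref{Assum-B1p} with this $\theta$ and a small $\gamma\le\beta$ bounds the two contributions by $C_0\|\erj\|_\beta\|\bu(t_{j+1})\|_\gamma\|\erjp\|_{1/2+\beta}$ and $C_0\|\uj\|_\gamma\|\erjp\|_\beta\|\erjp\|_{1/2+\beta}$ (for $\beta=0$, where $\theta=\tfrac12$, one uses the corresponding clause of \ref{Assum-B1p} and interpolates the resulting norm $\|\erj\|_{1/2-\gamma}$ between $\h$ and $\ve_{1/2}$). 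Since $\gamma\le\tfrac14$, on $A_j$ the factors $\|\bu(t_{j+1})\|_\gamma$ and $\|\uj\|_\gamma$ are at most $C(\log k^{-\eps})^{1/2}$; multiplying by $k$ and applying Young's inequality, I absorb the $\|\erjp\|_{1/2+\beta}$-factors into $2k\|\erjp\|_{1/2+\beta}^2$ and am left with Gronwall terms $Ck\log k^{-\eps}\,\|\erj\|_\beta^2$ and $Ck\log k^{-\eps}\,\|\erjp\|_\beta^2$, the latter being moved to the left once $Ck\log k^{-\eps}\le\tfrac12$, i.e. for $k\le\mathrm{k}_0$.

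Next I would treat the noise and consistency defects. Splitting the noise part as $(I-\pi_N)\int_{t_j}^{t_{j+1}}G(\bu)\,dW+\pi_N\int_{t_j}^{t_{j+1}}[G(\bu(s))-G(\bu(t_j))]\,dW+\pi_N[G(\bu(t_j))-G(\uj)]\Delta_{j+1}W$ and decomposing $\erjp=\erj+(\erjp-\erj)$ in the pairing, the $\erj$-part has vanishing $\cF_{t_j}$-conditional expectation against $\mathbf{1}_{A_j}$ (since each increment over $[t_j,t_{j+1}]$ is a mean-zero martingale increment and $\mathbf{1}_{A_j}\rA^{2\beta}\erj$ is $\cF_{t_j}$-measurable), while the $(\erjp-\erj)$-part is controlled by Young's inequality against the free term $\|\erjp-\erj\|_\beta^2$ and, after It\^o's isometry, by \ref{nonlin-g} (item \ref{REM-G-ii} of Remark \ref{REM-G}) and \eqref{Increment}; here the projection $(I-\pi_N)$ acting on the $\ve_{1/4}$-valued $G$ supplies a factor $\lambda_N^{-2(1/4-\beta)}$ and the increments supply powers of $k$. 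The deterministic consistency defects---the time-quadrature errors $\int_{t_j}^{t_{j+1}}[\rA\bu(s)-\rA\bu(t_{j+1})]\,ds$ and $\int_{t_j}^{t_{j+1}}[\mb(\bu(s),\bu(s))-\mb(\bu(t_j),\bu(t_{j+1}))]\,ds$, and the space-projection error---are estimated through the temporal regularity of the mild solution and the smoothing bound $\|(I-\pi_N)v\|_\beta\le\lambda_N^{-(1/4-\beta)}\|v\|_{1/4}$, the higher regularity from \eqref{Eq-10} and the numerical stability \eqref{STAB-DA-1}--\eqref{STAB-DA-2} furnishing precisely the exponents $k^{2(1/4-\beta)}$ and $\lambda_N^{-2(1/4-\beta)}$ after summation over $j$ and taking expectations.

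Finally, summing the truncated identity over $j$, telescoping via $\mathbf{1}_{A_j}\|\erj\|_\beta^2\le\mathbf{1}_{A_{j-1}}\|\erj\|_\beta^2$, and using that the initial error $\mathbf{e}^0=(I-\pi_N)\bu_0$ contributes only $O(\lambda_N^{-2(1/4-\beta)})$, I obtain a recursion $a_m+2k\sum_{j\le m}\EE(\mathbf{1}_{A_{j-1}}\|\erj\|_{1/2+\beta}^2)\le C[k^{2(1/4-\beta)}+\lambda_N^{-2(1/4-\beta)}]+C\log k^{-\eps}\,k\sum_{j<m}a_j$ for $a_j:=\EE(\mathbf{1}_{A_{j-1}}\|\erj\|_\beta^2)$. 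The discrete Gronwall lemma then contributes the factor $\exp(CT\log k^{-\eps})=k^{-CT\eps}$, and it is exactly this logarithmically calibrated truncation level that turns an otherwise uncontrollable constant into the arbitrarily small polynomial loss recorded as $k^{-2\eps}$ in \eqref{RATE}; since $\mathbf{1}_{\Omega_k}\le\mathbf{1}_{A_{j-1}}$, the estimate descends to $\Omega_k$ and gives \eqref{RATE}. I expect the principal obstacle to be the nonlinear bookkeeping of the second paragraph: one must balance the implicit/explicit splitting $\mb(\uj,\ujp)$ in the scheme against the two-sided local Lipschitz estimate \ref{NB-a}/\ref{Assum-B1p} so that every nonlinear factor is either absorbed by the lone coercive term $2k\|\erjp\|_{1/2+\beta}^2$ or multiplied only by the first power of the truncation level $\log k^{-\eps}$---any slack there would either destroy the absorption or inflate the Gronwall exponent past a fixed power of $k^{-\eps}$.
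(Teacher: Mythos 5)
Your proposal is correct and follows essentially the same route as the paper's proof: the integrated error identity \eqref{Eq-Error}, testing with $2\rA^{2\beta}\erjp$ so that the polarization identity yields the telescoping terms plus the single coercive term $2k\Vert\erjp\Vert^2_{\frac12+\beta}$, a decomposition of the nonlinear defect into a Lipschitz part (your $\theta=\frac12-\beta$ duality is exactly the paper's pairing of $\rA^{\beta-\frac12}[\cdots]$ with $\rA^{\frac12+\beta}\erjp$, producing the paper's terms $\mathscr{N}_{j,1}$--$\mathscr{N}_{j,4}$), a time-consistency part controlled by the temporal-regularity estimates of Lemma \ref{LEM-Essence}, a spectral projection part giving $\lambda_N^{-2(\frac14-\beta)}$ (the paper's $\mathscr{N}_{j,5}$ and $\mathscr{W}_{j,3}$), the same three-way splitting of the noise defect into time-increment, Lipschitz and projection pieces, and the logarithmic truncation level $f(k)=\log k^{-\eps}$ feeding the discrete Gronwall factor $e^{CTf(k)}$. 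Two points of comparison are worth recording. First, your adapted decreasing sets $A_j$ are a genuine refinement rather than a mere variant: the paper multiplies by the non-adapted indicator $\mathds{1}_{\Omega_k}$ after summation and then subtracts a ``mean-zero martingale'' $M_0$, a step that is delicate precisely because $\mathds{1}_{\Omega_k}$ is not $\cF_{t_j}$-measurable (the paper's displayed $M_0$ even pairs the integrand with the non-adapted $\erjp$); your device $\mathbf{1}_{\Omega_k}\le\mathbf{1}_{A_j}\le\mathbf{1}_{A_{j-1}}$, under which $\mathbf{1}_{A_j}\rA^{2\beta}\erj$ is $\cF_{t_j}$-measurable, is the standard Carelli--Prohl localization and makes the vanishing of those martingale terms rigorous, at the modest price of the one-sided telescoping $\mathbf{1}_{A_j}\Vert\erj\Vert^2_\beta\le\mathbf{1}_{A_{j-1}}\Vert\erj\Vert^2_\beta$ that you correctly note. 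Second, a small slip: the abstract framework nowhere assumes $\mb$ bilinear (only \ref{NB-a}--\ref{NB-c} hold), so the identity $\mb(\bu(t_j),\bu(t_{j+1}))-\mb(\uj,\ujp)=\mb(\erj,\bu(t_{j+1}))+\mb(\uj,\erjp)$ is not available in general; this is harmless, however, since Assumption \ref{NB-a} is exactly the two-sided difference estimate that delivers the same two bounded contributions directly, with the same choice of $\theta$ and $\gamma$. Finally, your honest Gronwall exponent $k^{-CT\eps}$ matches the paper's actual mechanism: in both arguments the loss stated as $k^{-2\eps}$ in \eqref{RATE} is obtained only after absorbing the constant into the arbitrary $\eps$, since $e^{C_{10}T\log k^{-\eps}}=k^{-C_{10}T\eps}$.
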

	\begin{proof}
		The proof of this theorem will be given in Section \ref{Sec-Proof}.
	\end{proof}
	\begin{rem}
		Note that owing to \eqref{Eq-10} and \eqref{STAB-DA-2} and the Markov inequality it is not difficult to prove that the set $\Omega_k$ satisfies
		\begin{equation*}
		\lim_{k\searrow0}\mathbb{P}[\Omega\backslash \Omega_k]=0.
		\end{equation*}
	\end{rem}
	\begin{cor}\label{COR-PROB}
		If all the assumptions of Theorem \ref{THM-MAIN} are satisfied, then the solution $\{\uj; \; j=1,\,2,\ldots, M\}$ of the numerical scheme \eqref{Algo} converges in probability in the Hilbert space $\ve_\beta$, $\beta \in [0,\frac14)$. More precisely, for any small number $\eps>0$, any $\theta_0 \in \left(0, \frac14-\beta-\eps\right)$ and $\theta_1\in (0, \frac14-\beta)$ we have 
		\begin{equation}
		\lim_{\varTheta \nearrow \infty}\lim_{k\searrow 0}\lim_{N\nearrow \infty}{\max_{1\le j\le M}}\mathbb{P}\left( \lVert \bu(t_j)-\uj \rVert_\beta+k^\frac12 \biggl(\sum_{j=1}^M\Vert \bu(t_j)-\uj \Vert^2_{\frac12+\beta}\biggr)^\frac12 \ge \varTheta[ k^{\theta_0}+\Lambda_N^{-\theta_1}] \right)=0.
		\end{equation} 
	\end{cor}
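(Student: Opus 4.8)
The plan is to obtain the corollary as a soft consequence of the localized error bound \eqref{RATE}: since all the hard analysis is already contained in Theorem~\ref{THM-MAIN}, what remains is only a localization argument combined with Chebyshev's inequality and a careful matching of exponents. Fix $\beta\in[0,\frac14)$, a small $\eps>0$, and exponents $\theta_0\in(0,\frac14-\beta-\eps)$, $\theta_1\in(0,\frac14-\beta)$, and abbreviate $\delta:=\frac14-\beta>0$. For $1\le m\le M$ I would set
$$
X_m:=\lVert \bu(t_m)-\ru^m\rVert_\beta+\Bigl(k\sum_{j=1}^M\Vert \bu(t_j)-\uj\Vert^2_{\frac12+\beta}\Bigr)^{\frac12},\qquad a_{k,N}:=\varTheta\bigl[k^{\theta_0}+\Lambda_N^{-\theta_1}\bigr].
$$
The first step is to split the probability over the localizing set: since $\{X_m\ge a_{k,N}\}\cap\Omega_k\subset\{\mathbf 1_{\Omega_k}X_m\ge a_{k,N}\}$, one has
$$
\PP\bigl(X_m\ge a_{k,N}\bigr)\le \PP\bigl(\mathbf 1_{\Omega_k}X_m\ge a_{k,N}\bigr)+\PP(\Omega\setminus\Omega_k).
$$
The last term does not depend on $m$ or $N$ and tends to $0$ as $k\searrow0$ by the Remark following Theorem~\ref{THM-MAIN}, so it is harmless in the prescribed order of limits.

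Next I would treat the localized term by Chebyshev's inequality together with the elementary bound $(\alpha+\gamma)^2\le2\alpha^2+2\gamma^2$. This reduces $\EE(\mathbf 1_{\Omega_k}X_m^2)$ to twice the sum of $\EE(\mathbf 1_{\Omega_k}\lVert \bu(t_m)-\ru^m\rVert_\beta^2)$ and $k\,\EE(\mathbf 1_{\Omega_k}\sum_j\Vert \bu(t_j)-\uj\Vert^2_{\frac12+\beta})$, both of which are controlled, uniformly in $m$, by the left-hand side of \eqref{RATE}. Hence
$$
\max_{1\le m\le M}\PP\bigl(\mathbf 1_{\Omega_k}X_m\ge a_{k,N}\bigr)\le\frac{\EE\bigl(\mathbf 1_{\Omega_k}X_m^2\bigr)}{a_{k,N}^2}\le\frac{2\mathrm{k}_0\,k^{-2\eps}\bigl[k^{2\delta}+\lambda_N^{-2\delta}\bigr]}{\varTheta^2\bigl[k^{\theta_0}+\Lambda_N^{-\theta_1}\bigr]^2}.
$$

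The only genuine point is then the bookkeeping of exponents, carried out in the order $\lim_{\varTheta}\lim_{k}\lim_{N}$. I would bound the denominator below by $k^{2\theta_0}$ in the first summand of the numerator and by $\Lambda_N^{-2\theta_1}$ in the second (reading $\Lambda_N$ as the eigenvalue $\lambda_N$), obtaining
$$
\frac{k^{-2\eps}k^{2\delta}}{\bigl[k^{\theta_0}+\Lambda_N^{-\theta_1}\bigr]^2}\le k^{2(\delta-\eps-\theta_0)},\qquad
\frac{k^{-2\eps}\lambda_N^{-2\delta}}{\bigl[k^{\theta_0}+\Lambda_N^{-\theta_1}\bigr]^2}\le k^{-2\eps}\,\lambda_N^{-2\delta}\,\Lambda_N^{2\theta_1}.
$$
Letting $N\nearrow\infty$ first, the second term vanishes because $\lambda_N\to\infty$ and $\theta_1<\delta$ makes the net power of $\lambda_N$ strictly negative, while the first term is $N$-independent and survives. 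Letting then $k\searrow0$, the surviving term $k^{2(\delta-\eps-\theta_0)}$ tends to $0$ precisely because $\theta_0<\delta-\eps=\frac14-\beta-\eps$, and $\PP(\Omega\setminus\Omega_k)\to0$ simultaneously. Consequently
$$
\limsup_{k\searrow0}\,\limsup_{N\nearrow\infty}\,\max_{1\le m\le M}\PP\bigl(X_m\ge a_{k,N}\bigr)\le 0
$$
for every fixed $\varTheta$, and the outer limit $\varTheta\nearrow\infty$ only reinforces this and closes the argument.

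The main obstacle is therefore not located in this corollary at all but in the estimate \eqref{RATE} of Theorem~\ref{THM-MAIN}; here the sole subtlety is to assign, to each summand of the numerator, the appropriate lower bound for the threshold ($k^{\theta_0}$ versus $\Lambda_N^{-\theta_1}$), so that the two constraints $\theta_1<\frac14-\beta$ and $\theta_0<\frac14-\beta-\eps$ come out as exactly what is required. In particular, the $\eps$-loss in the admissible range of $\theta_0$ is the direct footprint of the factor $k^{-2\eps}$ appearing in \eqref{RATE}, reflecting the logarithmic truncation used to define $\Omega_k$.
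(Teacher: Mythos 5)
Your proof is correct and follows essentially the same route as the paper's: both localize to $\Omega_k$, apply a Chebyshev--Markov bound fed by \eqref{RATE}, control $\mathbb{P}(\Omega\setminus\Omega_k)$ via the Markov inequality with \eqref{Eq-10} and \eqref{STAB-DA-2}, and do the identical exponent bookkeeping in the order $N\nearrow\infty$, $k\searrow 0$, $\varTheta\nearrow\infty$ (the paper phrases the exceptional event with squared norms and a factor $\varTheta^{-1}$ where you use second moments and $\varTheta^{-2}$, a purely cosmetic difference that in fact matches the unsquared statement of the corollary more faithfully). One small imprecision: $\Omega_k$ does depend on $N$ through $\ru^j$, so instead of asserting that $\mathbb{P}(\Omega\setminus\Omega_k)$ is $N$-independent you should say that its Markov bound $\tilde{C}_5/\log k^{-\eps}$ is uniform in $N$, which holds because the constants in \eqref{Eq-10} and \eqref{STAB-DA-2} are.
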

	\begin{proof}
		To shorten notation let us set $\mathbf{e}^j:= \bu(t_j)-\uj$ and
		$$ \Omega_{k,N}^\varTheta=\{\omega\in \Omega; \; \lVert \mathbf{e}^j \rVert_\beta^2+k \sum_{j=1}^M\Vert \mathbf{e}^j \Vert^2_{\frac12+\beta} \ge\varTheta[ k^{\theta_0}+\Lambda_N^{-\theta_1}]\},$$
		for any positive numbers $M$ and $k$. Let $\Omega_k$ be as in the statement of Theorem \ref{THM-MAIN}. 
		Owing to \eqref{RATE}, \eqref{Eq-10}, \eqref{STAB-DA-2} and the Chebychev-Markov inequality,  we can find a constant $\tilde{C}_5>0$ such that 
		\begin{align*}
		\mathbb{P}\left(\Omega_{k,N}^\varTheta\right)&=  \mathbb{P}(\Omega_{k,N}^\varTheta\cap \Omega_k ) +\mathbb{P}(\Omega_{k,N}^\varTheta\cap \Omega^{\mathrm{c}}_{k})
		\\
		&\le  \mathbb{P}(\Omega_{k,N}^\varTheta \cap \Omega_k ) + \mathbb{P}(\Omega^{\mathrm{c}}_{k})
		\\
		&\le  \frac{\mathrm{k}_0}{\varTheta} k^{2(\frac14-\beta)-2\eps-2\theta_0}+\frac{\mathrm{k}_0}{\varTheta} k^{-2\eps} \lambda_N^{-2(\frac14-\beta)+2\theta_1} +\frac{\tilde{C}_5}{\log{k^{-\eps}}}. 
		\end{align*}
		Letting $N\nearrow\infty$, then $k\searrow 0$ and finally $\varTheta\nearrow \infty$ in the last line we easily conclude the proof of the corollary. 
	\end{proof}
	To close this section let us make some few remarks. Instead of the scheme \eqref{Algo} we could also use a fully-implicit scheme. More precisely,  for any $j\in \{0,\ldots, M-1\}$  we look for a $\mathcal{F}_{t_{j}}$-measurable random variable $\muj\in \h_N$ such that for any $w\in \ve$ 
	{
		\begin{equation}\label{Algo-2}
		\left\{
		\begin{split}
		&\mathcal{U}^0= \pi_N \bu_0,
		\\
		& \langle \mujp-\muj+k [\pi_N \rA \mujp +\pi_N \mb(\mujp, \mujp)], \;  w \rangle= \langle w, \pi_N G(\muj)\Delta_{j+1}W\rangle,
		\end{split}
		\right.
		\end{equation}}
	where $\Delta_{j+1} W:=W(t_{j+1})-W(t_{j})$, $j\in \{0,\ldots, M-1\}$. 
	We have the following theorem:
	\begin{theorem}
		Let the assumptions \ref{NB-a}-\ref{NB-c} and \ref{nonlin-g} hold and assume that $\bu_0 \in \el^{16}(\Omega; \h)\cap \el^8(\Omega;\ve_\frac14)$. Let $N$ and $M$ be two fixed positive integers.  
		Then, 
		\begin{enumerate}[label=(\alph{*})]
			\item  for any $j\in \{0, \ldots, M-1\}$ there exists a unique  $\mathcal{F}_{t_{j}}$-measurable random variable $\muj\in \h_N$ satisfying \eqref{Algo-2} and the estimates \eqref{EQ:Stabl1} and \eqref{STAB-DA-2}.
			\item For any $\beta\in [0,\frac14)$ there exists a constant $\mathrm{k}_0 >0$ such that for any small number $\eps>0$ we have 
			{
				\begin{equation}
				{\max_{1\le j \le M}} \mathbb{E}\left(\mathbf{1}_{\Omega_k}\lVert \bu(t_j)-\muj \rVert^2_\beta\right)+2k \EE\left(\mathds{1}_{\Omega_k}\sum_{j=1}^M\Vert \bu(t_j)-\muj \Vert^2_{\frac12+\beta}\right)<\mathrm{k}_0 k^{-2\eps} [k^{2(\frac14-\beta)}+\lambda_N^{-2(\frac14-\beta)}],
				\end{equation}
			}
			where $$\Omega_k = \left\lbrace \omega:\sup_{t\in [0,T]}\lVert \bu(t,\omega)\rVert^2_\frac14<\log{k^{-\eps}}, \max_{0\le j\le M} \Vert \muj(\omega )\Vert^2_{\frac14}< \log{k^{-\eps}}\right\rbrace. $$
			\item Moreover, for any small number $\eps>0$, any $\theta_0 \in \left(0, \frac14-\beta-\eps\right)$ and $\theta_1\in (0, \frac14-\beta)$ 
			\begin{equation}
			\lim_{\varTheta \nearrow \infty}\lim_{k\searrow 0}\lim_{N\nearrow \infty} {\max_{1\le j \le M}}\mathbb{P}\left( \lVert \bu(t_j)-\muj \rVert^2_\beta+k^\frac12 \left(\sum_{j=1}^M\Vert \bu(t_j)-\muj \Vert^2_{\frac12+\beta} \right)^\frac12\ge  \varTheta[ k^{\theta_0}+{\lambda_N^{-\theta_1}}] \right)=0.
			\end{equation}
			
		\end{enumerate}
		%
		%
		%
	\end{theorem}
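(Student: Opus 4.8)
The plan is to run the same three-stage programme already carried out for the semi-implicit scheme---solvability and stability (Proposition~\ref{PROP-STAB}), the localized error estimate (Theorem~\ref{THM-MAIN}) and the passage to convergence in probability (Corollary~\ref{COR-PROB})---observing that the only difference between \eqref{Algo} and \eqref{Algo-2} is that the drift nonlinearity is frozen as $\mb(\uj,\ujp)$ in the former and taken fully implicitly as $\mb(\mujp,\mujp)$ in the latter, whereas the noise is explicit ($G(\muj)\Delta_{j+1}W$) in both. The crucial point is that Assumption~\ref{NB-b}, read with $\bu=\bv$, gives the coercivity $\langle\rA\bv+\mb(\bv,\bv),\bv\rangle\ge\Vert\bv\Vert_{\frac12}^2$, so the energy balances are insensitive to this change.

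For part~(a), I would first treat a single time step. Freezing $\muj$ and $\Delta_{j+1}W$, set
\[
F(\bv)=\bv+k\pi_N\rA\bv+k\pi_N\mb(\bv,\bv)-\muj-\pi_N G(\muj)\Delta_{j+1}W,\qquad \bv\in\h_N .
\]
Testing with $\bv$ and using \ref{NB-b} gives $\langle F(\bv),\bv\rangle\ge\vert\bv\vert^2+k\Vert\bv\Vert_{\frac12}^2-\vert\muj+\pi_N G(\muj)\Delta_{j+1}W\vert\,\vert\bv\vert$, which is strictly positive once $\vert\bv\vert$ is large; the standard corollary of Brouwer's theorem then produces a zero of the continuous map $F$ on $\h_N$, that is a solution $\mujp$. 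For uniqueness I would subtract the equations satisfied by two solutions, test the difference $\bw$ with itself, use $\langle\rA\bw,\bw\rangle=\Vert\bw\Vert_{\frac12}^2$, bound the nonlinear difference through Assumption~\ref{NB-a}, and absorb it via interpolation and Young's inequality into $\tfrac{k}{2}\Vert\bw\Vert_{\frac12}^2+Ck\vert\bw\vert^2$, forcing $\bw=0$; measurability follows as in Section~\ref{Sec-Stabl}. The stability bounds \eqref{EQ:Stabl1} and \eqref{STAB-DA-2} are then obtained exactly as in Proposition~\ref{PROP-STAB} and in the proof of \eqref{Eq-10}: testing \eqref{Algo-2} with $\mujp$ yields the $\h$-energy inequality via \ref{NB-b}, while testing with $\rA^{\frac12}\mujp$ and controlling $\langle\mb(\mujp,\mujp),\rA^{\frac12}\mujp\rangle$ by \ref{Assum-B1p} and interpolation yields the $\ve_{\frac14}$-energy inequality.

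For parts~(b) and~(c) I would repeat the argument behind Theorem~\ref{THM-MAIN} almost verbatim. Writing $\erjp=\bu(t_{j+1})-\mujp$ and testing the error equation with $\rA^{2\beta}\erjp$, every term is treated as in Section~\ref{Sec-Proof} except the nonlinear contribution, which here reads $\mb(\bu(t_{j+1}),\bu(t_{j+1}))-\mb(\mujp,\mujp)$ with both arguments at the single level $t_{j+1}$. On $\Omega_k$ the norms $\Vert\bu(t_{j+1})\Vert_{\frac14}$ and $\Vert\mujp\Vert_{\frac14}$ are bounded by $\log k^{-\eps}$, so Assumption~\ref{NB-a} lets me estimate this difference by $\Vert\erjp\Vert_\beta$ and $\Vert\erjp\Vert_{\frac12+\beta}$ times these logarithmic factors, after which the coercive term $\Vert\erjp\Vert_{\frac12+\beta}^2$ absorbs the top-order part. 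A discrete Gronwall argument then reproduces the rate \eqref{RATE}, and the Chebyshev--Markov estimate of Corollary~\ref{COR-PROB}, together with $\PP(\Omega\setminus\Omega_k)\to0$, gives the convergence in probability in~(c).

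The main obstacle is the error analysis in~(b), not the solvability of the scheme. Because the nonlinearity is now evaluated at the unknown level, the implicit term $k\pi_N\mb(\mujp,\mujp)$ sits on the left of the error equation and must be split into a part absorbed by $\Vert\erjp\Vert_{\frac12+\beta}^2$ and a genuinely error-amplifying cross term that is tamed only on $\Omega_k$; the delicate bookkeeping is to verify that, after summing over the $M=T/k$ steps, the logarithmic bounds accumulate to no more than the factor $k^{-2\eps}$ and leave the powers $k^{2(\frac14-\beta)}$ and $\lambda_N^{-2(\frac14-\beta)}$ intact. Since this is precisely the balance already achieved for the semi-implicit scheme---and the common time level in $\mb(\mujp,\mujp)$ even removes the cross-level consistency terms present in the analysis of \eqref{Algo}---I expect no genuinely new difficulty beyond a rearrangement of the same estimates.
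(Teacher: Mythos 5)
Your overall route is the same as the paper's: the paper gives no separate proof of this theorem, remarking only that the arguments are very similar to those of Proposition \ref{PROP-STAB}, Theorem \ref{THM-MAIN} and Corollary \ref{COR-PROB}, which is precisely the three-stage repetition you propose; your existence step (Brouwer on $\h_N$ with coercivity from \ref{NB-b}), the measurable-selection argument, the stability bounds, and the localized error and probability estimates in (b)--(c) all transpose correctly. The one genuine gap is the \emph{uniqueness} claim in (a), which is also the one point not covered by the earlier arguments (the selection device of Section \ref{Sec-Stabl} is designed precisely for possibly non-unique solutions). Subtracting two solutions of \eqref{Algo-2} and testing with $\bw=\bv_1-\bv_2$, Assumption \ref{NB-b} does dispose of $\langle \mb(\bv_2,\bw),\bw\rangle\ge 0$, but the remaining cross term is bounded, via \ref{NB-a} with $\theta=\gamma=\tfrac14$ and interpolation, only by
\begin{equation*}
k\,\lvert\langle \mb(\bw,\bv_1),\bw\rangle\rvert \le C_0 k\,\lVert \bv_1\rVert_{\frac14}\,\lvert \bw\rvert^{\frac32}\lVert \bw\rVert_{\frac12}^{\frac12} \le \frac{k}{2}\lVert \bw\rVert_{\frac12}^2 + Ck\,\lVert \bv_1\rVert_{\frac14}^{\frac43}\lvert \bw\rvert^2,
\end{equation*}
so after absorption you are left with $\bigl(1-Ck\lVert \bv_1\rVert_{\frac14}^{4/3}\bigr)\lvert\bw\rvert^2\le 0$. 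Since $\lVert \bv_1\rVert_{\frac14}$ depends on $\Delta_{j+1}W(\omega)$ and is not bounded pathwise, while $k$ is \emph{fixed} in the theorem (fixed $M$), you cannot ``force $\bw=0$'' unconditionally; your sketch delivers uniqueness only on the random event where $k$ times the solution norm is small. As written this step fails, and it would need either a smallness condition on $k$ relative to a pathwise bound, or a restatement of (a) with existence plus measurable selection only (which is all that Proposition \ref{PROP-STAB} actually provides for the semi-implicit scheme).

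A second, minor inaccuracy: in (b) you claim the common time level in $\mb(\mujp,\mujp)$ ``removes the cross-level consistency terms.'' It does not: the error equation still contains $\mb(\bu(s),\bu(s))-\pi_N\mb(\mujp,\mujp)$ integrated over $[t_j,t_{j+1}]$, and the noise remains explicit through $G(\muj)$, so the time-regularity terms controlled by Lemma \ref{LEM-Essence} --- the analogues of $\mathscr{N}_{j,1}$, $\mathscr{N}_{j,3}$ and $\mathscr{W}_{j,1}$ --- all persist, with both levels $j$ and $j+1$ still appearing. This costs nothing, since the required estimates are exactly those of Section \ref{Sec-Proof}, but the bookkeeping is not actually simpler than for \eqref{Algo}.
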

	\begin{proof}
		The arguments for the proof of this theorem are very similar to those of the proofs of Proposition \ref{PROP-STAB}, Theorem \ref{THM-MAIN} and Corollary \ref{COR-PROB}, thus we omit them.
	\end{proof}	
	
	\section{Existence and stability analysis of the scheme: Proof of Proposition \ref{PROP-STAB}}\label{Sec-Stabl}
	In this section we will show that for any $j\in \{0,\ldots, M-1\}$ the numerical scheme \eqref{Algo} admits at least one solution $\uj\in \h_N$. We will also show that \eqref{Algo} is stable in $D(\rA^\frac14)$, see {Proposition  \ref{PROP-STAB} for more precision.} 
	
	\begin{proof}[Proof of Proposition \ref{PROP-STAB}]
		As we mentioned in Subsection \ref{Sec:Stable} we will only prove the existence, measurability and the estimates \eqref{STAB-DA-1} and \eqref{STAB-DA-2}. The proof of the inequalities \eqref{EQ:Stabl1}-\eqref{STAB-l2-B} will be omitted because it is very similar to the proof of \cite{Carelli+Prohl} (see also \cite{ZB+EC+AP-IMA}).

		\noindent \textit{Proof of the existence.} We first establish that for any $j\in \{0,\ldots M-1\}$ there exists $\uj\in \h_N$ satisfying the numerical scheme \eqref{Algo}. To this end, let us fix $\omega\in \Omega$ and for a given $\uj\in \h_N$ consider the map $\Lambda^j_\omega: \h_N \to \h_N$ defined by 
		\begin{equation*}
		{\langle\Lambda^j_\omega(\bv), \psi\rangle= \langle\bv-\uj(\omega), \psi\rangle+ k\langle\rA \bv+ \pi_N\mb(\uj(\omega),\bv), \psi\rangle- \langle \psi,  \pi_N G(\uj(\omega))\Delta_{j+1}W(\omega) \rangle }
		\end{equation*}
		for any $\psi\in \h_N$. Note that since $\h_N\subset D(\rA)$ the map $\Lambda^j_\omega$ is well-defined. From assumptions \ref{NB-a} and \ref{nonlin-g} and the linearity of $\rA$ it is clear that for given $\uj$ the map $\Lambda^j_\omega$ is continuous. Furthermore, using H\"older's inequality, the fact that $\lambda_1\lvert \psi \vert^2 \le \Vert \psi \Vert^2_\frac12$, $\psi \in \ve$  and assumptions \ref{NB-b} and \ref{nonlin-g} we derive that 
		\begin{equation*}
		\begin{split}
		{\langle\Lambda_\omega^j\bv, \bv\rangle}\ge &\lvert\bv \vert^2 \left(\lambda_1 k + \frac12-\frac k2\right)-\frac{\vert \uj(\omega)\vert^2}{2}\left(1+\Vert \Delta_{j+1}W(\omega)\Vert^2_{\mathscr{H}}C^2_2\right)-\frac12 \Vert \Delta_{j+1}W(\omega)\Vert^2_{\mathscr{H}}C^2_2\\
		\ge & \gamma \vert \bv \vert^2 -\Gamma^j_\omega.
		\end{split}
		\end{equation*} 
		Since  $k<1$, and by Assumption \ref{noise}, $\Vert \Delta_{j+1}W\Vert^2_{\mathscr{H}}<\infty$, the constant $\gamma$ is positive and $\mu_j= \sqrt{\frac{\Gamma^j_\omega}{\gamma}}<\infty$ whenever $\vert \uj\vert^2 <\infty$. Thus, we have ${\langle\Lambda_\omega^j\bv, \bv\rangle}\ge 0$ for any $\bv \in   \mathcal{H}^j_N(\omega):=\{\psi \in \h_n;\; \vert \psi \vert = R \mu_j \}$ where $R> 1$ is an arbitrary constant.  
		Since $\ru^0=\pi_N \bu_0$ is given, we can conclude from the above observations and Brouwer fixed point theorem that there exists at least one $\ru^1\in \h_N$ satisfying  $$ 
		\Lambda^0_\omega(\ru^1)=0 \text{ and } \lvert \ru^1 \vert\le R \mu_0 .$$ In a similar way, assuming that $\uj\in \h_N$, we infer that there exists at least one $\ujp \in \h_N$ such that  $$ 
		\Lambda^{j}_\omega(\ujp)=0 \text{ and } \lvert \ujp \vert\le R \mu_j .$$
		Therefore, we have {to} prove by induction that given $\ru^0\in \h_N$ and a $\mathscr{H}$-valued Wiener process $W$, for each $j$, there exists a sequence $\{\uj; \; j=1,\ldots, M\}\subset\h_N $ satisfying the algorithm \eqref{Algo}. 
		
		\noindent \textit{Proof of the measurability.} In order to prove the $\mathcal{F}_{t_j}$-measurability of $\uj$ it is sufficient to show that for each $j\in \{1,\ldots, M\}$ one can find a Borel measurable map $\mathscr{E}_j:\h_N \times \mathscr{H}\to \h_N$ such that $\uj= \mathscr{E}_j(\ru^{j-1}, \Delta_{j}W)$. In fact, if such claim is true then by exploiting the $\mathcal{F}_{t_j}$-measurability of $\Delta_jW$ one can argue by induction and show that if $\ru^0$ is $\mathcal{F}_0$-measurable then $ \mathscr{E}_j(\ru^{j-1}, \Delta_{j}W)$ { is $\mathcal{F}_{t_j}$-measurable}, hence $\uj$ is $\mathcal{F}_{t_j}$-measurable. Thus, it remains to prove  the existence of $\mathscr{E}_j$. For this purpose we will closely follow \cite{DB+D}. Let $\mathcal{P}(\h_N) $ be the set of subsets of $\h_N$ and consider a multivalued map $\mathscr{E}^S_{j+1}: \h_N \times \mathscr{H} \to \mathcal{P}(\h_N)$ such that for each $(\uj, \eta_{j+1})$, $\mathscr{E}_{j+1}^S(\uj, \eta_{j+1})$ denotes the set of solutions $\ujp$ of \eqref{Algo}. From the existence result {above we} deduce that    $\mathscr{E}_{j+1}^S$ maps  $\h_N \times \mathscr{H}$ to nonempty closed subsets of $\h_N$. Furthermore, since we are in the finite dimensional space $\h_N$, we can prove, by using the assumptions \ref{NB-a} and \ref{nonlin-g} and the sequential characterization of the closed graph theorem, that  the graph of $\mathscr{E}_{j+1}^S$ is closed. From these last two facts and \cite[Theorem 3.1]{AB+RT} we can find a univocal map 
		$\mathscr{E}_{j+1}: \h_N \times \mathscr{H}\to \h_N$ such that $\mathscr{E}_j(\uj, \eta_{j+1})\in \mathscr{E}^S_{j+1}(\uj, \eta_{j+1})$ and $\mathscr{E}_j$ is measurable when $\h_N \times \mathscr{H}$ and $\h_N$ are equipped with their respective Borel $\sigma$-algebra.  This completes the proof of the measurability of the solutions of \eqref{Algo}.    
		
		\textit{Proof of \eqref{EQ:Stabl1}-\eqref{STAB-l2-B}.} Thanks to the assumption \ref{NB-b}, the proof of the inequalities \eqref{EQ:Stabl1}-\eqref{STAB-l2-B} is very similar to the proof of \cite{Carelli+Prohl}, so we omit it and we directly proceed to the proof of the estimates  \eqref{STAB-DA-1} and \eqref{STAB-DA-2}.

		\textit{Proof of \eqref{STAB-DA-1}.} 
		Taking $w=2\rA^{\frac12}\ujp$ in \eqref{Algo}, using the Cauchy-Schwarz inequality and the identity 
		\begin{equation}\label{ID-T}
		(( \bv-\bx, 2 \bv))= \Vert \bv \Vert^2 -\Vert \bx \rVert^2 + \Vert \bv-\bx\Vert^2, \text{ ($\bv,\bx$ \text{ are elements of a Hilbert space with norm $\lVert \cdot\rVert$ })}
		\end{equation}
		yield
		\begin{equation*}
		\begin{split}
		&\Vert \ujp \Vert^2_\frac14 -\Vert \uj \Vert^2_\frac14 + \Vert \ujp -\uj \Vert^2_\frac14 + 2 k\Vert \ujp \Vert^2_{\frac34}\\
		&\quad \le 2k \lvert {\pi_N}\mb(\uj, \ujp)\vert\Vert \ujp \Vert_{\frac12}
		+2  \lVert {\pi_N}G(\uj) \Delta_{j+1} W\Vert_\frac14 \lVert \ujp -\uj \rVert_{\frac14}\\
		&\qquad + 2 \langle \rA^\frac14 \uj ,\rA^\frac14 {\pi_N}G(\uj) \Delta_{j+1} W\rangle.
		\end{split}
		\end{equation*}
		{Using the fact that $\lVert \pi_N\rVert_{\mathscr{L}(\h, \h_N) }\le 1$, we obtain}
		\begin{equation}\label{EST-DA-quart}
		\begin{split}
		&\Vert \ujp \Vert^2_\frac14 -\Vert \uj \Vert^2_\frac14 + \Vert \ujp -\uj \Vert^2_\frac14 + 2 k\Vert \ujp \Vert^2_{\frac34}
		\\
		&\leq  2k \lvert \mb(\uj, \ujp)\vert\Vert \ujp \Vert_{\frac12}
		+2  \lVert G(\uj) \Delta_{j+1} W\Vert_\frac14 \lVert \ujp -\uj \rVert_{\frac14}
		\\
		&\qquad + 2 \langle \rA^\frac14 \uj , \rA^\frac14 {\pi_N}G(\uj) \Delta_{j+1} W\rangle.
		\end{split}
		\end{equation}
		{Using  Assumption \ref{Assum-B1p}, the complex interpolation inequality in \cite[Theorem 1.9.3, pp 59]{Triebel}, the Young inequality,  and the continuous embedding $\ve_\frac12\subset \ve_\frac14$ we obtain
			\begin{align}
			2 \lvert \mb(\uj, \ujp)\vert\Vert \ujp \Vert_{\frac12}& \le C \lvert \uj \rvert^4 \lVert \ujp \rVert^2_\frac14 + \lVert \ujp \rVert^2_{\frac34} \label{Est-nonlinB-H2}\\
			& \le C \lvert \uj \rvert^4 \lVert \ujp \rVert^2_\frac12 + \lVert \ujp \rVert^2_{\frac34}, \nonumber
			\end{align}  
			which implies that}
		\begin{equation}\label{EQ:proof-1}
		\begin{split}
		\Vert \ujp \Vert^2_\frac14 -\Vert \uj \Vert^2_\frac14 + \frac12 \Vert \ujp -\uj \Vert^2_\frac14 + 2 k \Vert \ujp \Vert^2_{\frac34} \le 2 C k  \vert \uj\vert^4 \Vert \ujp\Vert^2_{\frac12} +{4\lVert G(\uj) \Delta_{j+1} W\Vert_\frac14^2}
		\\
		+ 2  \langle \rA^\frac14 \uj , \rA^\frac14 {\pi_N} G(\uj) \Delta_{j+1} W \rangle.
		\end{split}
		\end{equation}
		Since $\ru^j$ is a constant, adapted and hence progressively measurable process, it is not difficult to prove that
		\begin{equation*}
		2 \EE \langle \rA^\frac14 \ru^j, \rA^\frac14 {\pi_N}G(\ru^j )\Delta_{j+1}W \rangle=0. 
		\end{equation*}
		Using \eqref{STAB-l2-A}  and  \eqref{STAB-l2-B} with $p=2$ and $p=3$ respectively, we easily prove that there exists a constant $C>0$, depending only on $T$,   such that
		\begin{equation}\label{nonlin-B-Est}
		k \EE \left(\sum_{j=0}^{M-1}\vert \ru^j\vert^4 \Vert \ru^{j+1} \Vert^2_\frac12\right) \le\left( \EE \max_{1\le m \le M }\vert \ru^m \vert^8 \right)^\frac12 \biggl(\EE \biggl(k \sum_{j=1}^{M} \Vert \ru^j \Vert^2_\frac12 \biggr)^2\biggr)^\frac12\le C (1+\EE \lvert \bu_0\rvert^8 )^2.
		\end{equation}
		Now, since $\uj$ is $\mathcal{F}_{t_j}$-measurable and $\Delta_{j+1}W$ is independent of $\mathcal{F}_{t_j}$, we  infer that there exists a constant $C>0$ such that for any {$j\in \{0,\ldots, M-1\} $}
		\begin{align}
		\EE\left( \lVert G(\uj) \Delta_{j+1} W\Vert^{2}_\frac14 \right)&\le \EE\left( \EE \left(\Vert G(\uj)\Vert^{2}_{\mathscr{L}(\mathscr{H},\ve_\frac14) } \Vert \Delta_{j+1} W\Vert^{2}_{\mathscr{H}}\vert \mathcal{F}_{t_j} \right) \right)\nonumber 
		\\
		&=m \EE\left( \Vert G(\uj)\Vert^{2}_{\mathscr{L}(\mathscr{H},\ve_\frac14) }  \EE \left(\Vert \Delta_{j+1} W\Vert^{2}_{\mathscr{H}}\vert \mathcal{F}_{t_j} \right) \right)\nonumber 
		\\
		&\le C k \;\left(\mathrm{tr}Q\right)^\frac12 \; (1+\EE \Vert \uj \Vert_\frac14^{2}),\label{Est-Stoc}
		\end{align}
		where  \eqref{Increment} and Assumption \ref{nonlin-g} along with Remark \ref{REM-G}-\ref{REM-G-ii}   were used to derive the last line of the above chain of inequalities.
		
		Now taking the mathematical expectation in \eqref{EQ:proof-1}, summing both sides of the resulting equations from {$j=0$ to $m-1$} and using  the last three observations imply
		\begin{equation*}
		\begin{split}
		{\max_{1\le m \le M} \EE \Vert \ru^m\Vert^2_\frac14} + \frac12 \EE \biggl(\sum_{j=0}^{M-1} \Vert \ru^{j+1} -\ru^j \Vert^2_\frac14 \biggr) + 2 k \EE\sum_{j=1}^M\Vert \ru^j \Vert^2_{\frac34}\\ \le C T + \EE \Vert \bu_0\Vert^2_\frac14 + C \trace Q k \sum_{m=1}^{M} \max_{1\le j\le m} \EE  \Vert \ru^j \Vert^2_\frac14,
		\end{split}
		\end{equation*}  
		from which along with the discrete Gronwall lemma we infer that there exists a constant $C>0$ such that 
		\begin{equation}\label{EQ:proof-2}
		\begin{split}
		\max_{1\le m \le M} \EE \Vert \ru^m\Vert^2_\frac14 + \frac12 \EE \biggl(\sum_{j=0}^{M-1} \Vert \ru^{j+1} -\ru^j \Vert^2_\frac14 \biggr) + 2 k \EE\sum_{j=1}^M\Vert \ru^j \Vert^2_{\frac34} \le  C(1+\EE \lVert \bu_0\rVert^2_\frac14 + [\EE \lvert \bu_0\rvert^8]^2).
		\end{split}
		\end{equation}  
		
		Note that from \eqref{EQ:proof-1} we can derive that there exists a constant $C>0$ such that
		\begin{equation*}
		\begin{split}
		\EE \max_{1\le m\le M}\lVert \ru^m \rVert^2_\frac14 \le\EE \Vert \bu_0 \Vert^2_\frac14 +  C k \EE \sum_{j=0}^{M-1} \lvert \uj \rvert^4 \lVert \ujp \rVert^2_\frac12 + \EE \sum_{j=0}^{M-1} \lVert G(\uj)\Delta_{j+1}W\rVert^2_\frac14 \\+ 2\EE \max_{1\le m\le M}\sum_{j=0}^{m-1}\langle \rA^\frac14 {\pi_N}G(\uj)\Delta_{j+1}W, \rA^\frac14 \uj\rangle\\=:\sum_{i=1}^4 \mathrm{I}_i.
		\end{split}
		\end{equation*}
		Arguing as in \cite[proof of (3.9)]{ZB+EC+AP-IMA} we can establish that 
		
		\begin{equation*}
		\mathrm{I}_4 \le \frac12 \EE \lVert \bu_0 \rVert^2_\frac14 +\frac12 \EE \max_{1\le m \le M} \lVert \ru^m \rVert^2_\frac14 + C k \sum_{j=0}^{M-1} \EE \lVert \uj \rVert^2_\frac14, 
		\end{equation*}
		which altogether with \eqref{EQ:proof-2} yields that 
		\begin{equation*}
		\mathrm{I}_4 \le \frac12 \EE \max_{1\le m \le M} \lVert \ru^m \rVert^2_\frac14 + C (1+ \EE \lVert \bu_0\rVert^2_\frac14).
		\end{equation*}
		Using the same idea as in the proof of \eqref{Est-Stoc} and using \eqref{EQ:proof-2} we infer that 
		\begin{equation*}
		\mathrm{I}_3\le C (1+ \EE \lVert \bu_0\rVert^2_\frac14).
		\end{equation*}
		Using these two estimates and the inequality  \eqref{nonlin-B-Est}  we derive that  there exists a constant $C>0$ such that 
		\begin{equation*}
		\EE \max_{1\le m\le M}\lVert \ru^m \rVert^2_\frac14 \le C(1+\EE \lVert \bu_0\rVert^2_\frac14 + [\EE \lvert \bu_0\rvert^8]^2),
		\end{equation*}
		which along with \eqref{EQ:proof-2} completes the proof of \eqref{STAB-DA-1}.
		
		Now, we continue with the derivation of an estimate of {$\max_{1\le m \le M}  \EE \lVert \ru^m \rVert^4_\frac14.$} Multiplying \eqref{EST-DA-quart} by $\lVert \ujp \rVert^2_\frac14$ and using identity \eqref{ID-T} and then summing both sides of the resulting equation  from {$j=0$ to $m-1$} implies
		\begin{equation}\label{Est-DA-quart-1}
		\begin{split}
		\frac12 \lVert \ru^m \rVert^4_\frac14+\frac12 \sum_{j=0}^{m-1} \biggl \lvert \lVert \ujp \rVert^2_\frac14 - \lVert \uj \rVert^2_\frac14 \biggr\rvert^2 + \sum_{j=0}^{m-1} \lVert \ujp \rVert^2_\frac14 \lVert \ujp -\uj \rVert^2_\frac14 + 2k \sum_{j=0}^{m-1} \lVert \ujp \rVert^2_\frac14 \lVert \ujp \rVert^2_\frac34
		\\
		\le \frac12 \lVert \bu_0 \rVert^4_\frac14+   C k \sum_{j=0}^{m-1} \lvert \mb(\uj,\ujp) \rvert^2 \lVert \ujp \rVert^2_\frac12 \lVert \ujp \rVert^2_\frac14\\+ 2 \sum_{j=0}^{m-1}\langle \rA^\frac14 [\ujp-\uj] , \rA^\frac14 {\pi_N}G(\uj)\Delta_{j+1}W\rangle \lVert \ujp\rVert^2_\frac14 \\ +    2 \sum_{j=0}^{m-1}\langle \rA^\frac14 \uj ,  \rA^\frac14 {\pi_N}G(\uj)\Delta_{j+1}W \rangle \lVert \ujp\rVert^2_\frac14 \\
		=:  \frac12 \lVert \bu_0 \rVert^4_\frac14+ \mathrm{J}_1 +\mathrm{J}_2 +\mathrm{J}_3.
		\end{split}
		\end{equation}
		Thanks to the estimate \eqref{Est-nonlinB-H2}  we can estimate $\mathrm{J}_1$ as follows
		\begin{equation*}
		\EE \mathrm{J}_1 \le CK \EE \sum_{j=0}^{M-1} \lvert \uj \rvert^4 \lVert \ujp \rVert^4_\frac14 + k \EE \sum_{j=0}^{M-1} \lVert \ujp \rVert^2_\frac14 \lVert \ujp \rVert^2_\frac34=: \mathrm{J}_{1,1}+\mathrm{J}_{1,2}.
		\end{equation*}
		Since  the second term $\mathrm{J}_{1,2}$ can be absorbed in the LHS later on, we will focus on estimating the second term $\mathrm{J}_{1,1}$. We have 
		\begin{equation*}
		\begin{split}
		\mathrm{J}_{1,1}\le & C k \sum_{j=0}^{M-1} \lvert \uj \rvert^4 \lvert \ujp \rvert^2 \lVert \ujp \rVert_\frac12^2
		\\
		\le&  C \biggl(\EE\max_{0\le j \le M-1}[\lvert \uj \rvert^8 \lvert \ujp \rvert^4] \biggr)^\frac12 \biggl(\EE\biggl[ k\sum_{j=1}^{M} \lVert \uj \rVert^2_\frac12  \biggr]^2 \biggr)^\frac12\\
		\le &  C \biggl(\EE[\max_{0\le j \le M-1}\lvert \uj \rvert^{12} ] \biggr)^\frac12 \biggl(\EE\biggl[ k\sum_{j=1}^{M} \lVert \uj \rVert^2_\frac12  \biggr]^2 \biggr)^\frac12\\
		\le & C (1+\EE \lvert \bu_0 \rvert^{16} ),
		\end{split}
		\end{equation*}
		where \eqref{STAB-l2-A} and \eqref{STAB-l2-B} are used to obtain the last line. Hence,
		\begin{equation*}
		\EE \mathrm{J}_1 \le  C (1+\EE \lvert \bu_0 \rvert^{16} ) + \EE k \sum_{j=0}^{M-1} \left(\lVert \ujp \rVert^2-\frac14 \lVert \ujp \rVert^2_\frac34\right).
		\end{equation*}
		Now we estimate $\mathrm{J}_2$ as follows
		\begin{equation*}
		\begin{split}
		\EE \mathrm{J}_2 \le & C \EE \sum_{j=0}^{M-1} \lVert G(\uj)\Delta_{j+1} W \rVert^2_\frac14 \left(\lVert \ujp \rVert^2_\frac14 - \lVert \uj \rVert^2_\frac14  + \lVert \uj \rVert^2_\frac14 \right) + \frac12 \EE \sum_{j=0}^{M-1} \lVert \ujp -\uj \rVert^2_\frac14 \lVert \ujp \rVert^2_\frac14 \\
		\le &  C \EE \sum_{j=0}^{M-1} \lVert G(\uj)\Delta_{j+1} W \rVert^4_\frac14 + C \EE \sum_{j=0}^{M-1} \lVert G(\uj)\Delta_{j+1} W \rVert^2_\frac14 \lVert \uj \rVert^2_\frac14 + \frac18  \EE \sum_{j=0}^{M-1}\biggl\vert \lVert\ujp\rVert^2_\frac14 -\lVert \uj \rVert^2_\frac14 \biggr\vert^2 \\ 
		& \qquad \qquad + \frac12  \EE \sum_{j=0}^{M-1} \lVert \ujp -\uj \rVert^2_\frac14 \lVert \ujp \rVert^2_\frac14.
		\end{split}
		\end{equation*}
		As long as $\mathrm{J}_3$ is concerned we have 
		\begin{equation*}
		\begin{split}
		\EE \mathrm{J}_3&= 2 \EE \sum_{j=0}^{m-1} \langle \rA^\frac14 \uj , \rA^\frac14 {\pi_N}G(\uj)\Delta_{j+1}W \rangle \lVert \uj \rVert^2_\frac14 + 2 \EE\sum_{j=0}^{m-1} \langle  \rA^\frac14 \uj, \rA^\frac14 G(\uj)\Delta_{j+1}W  \rangle \left({\lVert\ujp \rVert^2_\frac14} - \lVert \uj \rVert^2_\frac14 \right)
		\\
		&=2 \EE\sum_{j=0}^{m-1} \langle \rA^\frac14 \uj,  \rA^\frac14 {\pi_N}G(\uj)\Delta_{j+1}W \rangle \left({\lVert\ujp \rVert^2_\frac14} - \lVert \uj \rVert^2_\frac14 \right)
		\\
		&\le C \EE\sum_{j=0}^{M-1}\lVert \rA^\frac14 G(\uj)\Delta_{j+1}W\lVert^2_\frac14 \lVert \uj \rVert^2_\frac14 + \frac18 \EE\sum_{j=0}^{M-1} \biggl \lvert {\lVert\ujp \rVert^2_\frac14} - \lVert \uj \rVert^2_\frac14 \biggr \rvert^2 
		\end{split}
		\end{equation*}
		because for any $j$ $$ \EE \langle \rA^\frac14 \uj , \rA^\frac14 {\pi_N} G(\uj)\Delta_{j+1}W \rangle \lVert \uj \rVert^2_\frac14=0 .$$
		
		By a similar idea as used to derive  \eqref{Est-Stoc} we can prove that 
		\begin{equation*}
		C \EE \sum_{j=0}^{M-1} \lVert G(\uj)\Delta_{j+1} W \rVert^4_\frac14 + C \EE \sum_{j=0}^{M-1} \lVert G(\uj)\Delta_{j+1} W \rVert^2_\frac14 \lVert \uj \rVert^2_\frac14 \le C + C k \EE \sum_{j=0}^{M-1} \lVert \uj \rVert^4_\frac14.
		\end{equation*}
		Thus, 
		\begin{equation*}
		\EE[ \mathrm{J}_2 + \mathrm{J}_3] \le C + C k \EE \sum_{j=0}^{M-1} \lVert \uj \rVert^4_\frac14 +\frac14 \EE\sum_{j=0}^{M-1} \biggl \lvert \lVert\ujp \rVert^2_\frac14 - \lVert \uj \rVert^2_\frac14 \biggr \rvert^2  +\frac12 \EE \sum_{j=0}^{M-1} \lVert \ujp -\uj \rVert^2_\frac14 \lVert \ujp \rVert^2_\frac14. 
		\end{equation*}
		Taking the mathematical expectation in \eqref{Est-DA-quart-1} and by plugging the information about $\mathrm{J}_i$, $i=1,2,3$ in the resulting equation yield  
		\begin{equation*}
		\begin{split}
		{\max_{1\le m\le M}\frac12  \EE\lVert \ru^m\rVert^4_\frac14} + \frac14 \EE \sum_{j=0}^{M-1} \biggl \lvert \lVert \ujp \rVert^2_\frac14 - \lVert \uj \rVert^2_\frac14 \biggr\rvert^2 
		\\
		+ \frac12 \EE \sum_{j=0}^{M-1} \lVert \ujp \rVert^2_\frac14 \lVert \ujp -\uj \rVert^2_\frac14 + k \EE \sum_{j=1}^{M} \lVert \uj \rVert^2_\frac14 \lVert \uj \rVert^2_\frac34
		\\
		\le    C (1+\EE \lvert \bu_0 \rvert^{12} + \lVert \bu_0\rVert^4_\frac14  )+  C k \EE \sum_{j=0}^{M-1} \lVert \uj \rVert^4_\frac14, 
		\end{split}
		\end{equation*}  
		which along with the Gronwall inequality yields
		\begin{equation*}
		\begin{split}
		{\max_{1\le m\le M}\frac12  \EE\lVert \ru^m\rVert^4_\frac14} \le  C (1+\EE \lvert \bu_0 \rvert^{12} + \lVert \bu_0\rVert^4_\frac14  ).
		\end{split}
		\end{equation*}  
		The latter inequality is used in the former one to derive that 
		\begin{equation}\label{Est-DA-quart-2}
		\begin{split}
		{\max_{1\le m\le M}\frac12  \EE\lVert \ru^m\rVert^4_\frac14} + \frac14 \EE \sum_{j=0}^{M-1} \biggl \lvert \lVert \ujp \rVert^2_\frac14 - \lVert \uj \rVert^2_\frac14 \biggr\rvert^2 + \frac12 \EE \sum_{j=0}^{M-1} \lVert \ujp \rVert^2_\frac14 \lVert \ujp -\uj \rVert^2_\frac14\\ + k \EE \sum_{j=1}^{M} \lVert \uj \rVert^2_\frac14 \lVert \uj \rVert^2_\frac34
		\le   C (1+\EE \lvert \bu_0 \rvert^{12} + \lVert \bu_0\rVert^4_\frac14  ).
		\end{split}
		\end{equation} 
		Now we continue our analysis with the estimation of $\EE\max_{1\le j\le M} \lVert \uj \rVert^4_\frac14.$  To start with this analysis, we easily  derive from \eqref{Est-DA-quart-1} the following inequality
		\begin{equation*}
		\begin{split}
		{\max_{1\le m\le M}\frac12  \EE\lVert \ru^m\rVert^4_\frac14}  &\le C k \sum_{j=0}^{M-1} \lvert \uj \rvert^4 \lVert \ujp \rVert^2 \lVert \ujp \rVert^2_\frac12 
		\\
		&+ C\sum_{j=0}^{M-1} \left( \lVert G(\uj)\Delta_{j+1} \rVert^4_\frac14 +  \lVert G(\uj)\Delta_{j+1} \rVert^2_\frac14 \lVert \uj \rVert^2_\frac14 \right)
		\\
		& + \max_{0\le j \le M-1 }\sum_{\ell=0}^{j-1} \langle  \rA^\frac14 \mathrm{U}^\ell, \rA^\frac14 {\pi_N} G(\mathrm{U}^\ell)\Delta_{\ell+1}W \rangle \lVert \mathrm{U}^\ell \rVert^2_\frac14=: J_1+J_2+J_3.
		\end{split}
		\end{equation*}
		Arguing as in the proof of \eqref{Est-Stoc} and using \eqref{Est-DA-quart-2}, the mathematical expectation of $J_1+J_2$ can be estimated as follows
		\begin{equation*}
		\EE(J_1+J_2)\le C \EE(1+ \lvert \bu_0 \rvert^{16} + \Vert \bu_0 \Vert^4_\frac14). 
		\end{equation*} 
		The same idea as used in the proof of \cite[inequality (3.15)]{ZB+EC+AP-IMA} yields
		\begin{equation*}
		\EE J_3 \le \frac14 {\EE \max_{1\le m\le M} \lVert \ru^m \rVert^4_\frac14 }+ C\EE\lVert \bu_0 \rVert^4_\frac14 + C k \EE\sum_{j=0}^{M-1} \lVert \uj \rVert^4_\frac14, 
		\end{equation*}
		from which altogether with \eqref{Est-DA-quart-2} we infer that 
		\begin{equation*}
		\EE J_3 \le C \EE(1+ \lvert \bu_0 \rvert^{16} + \Vert \bu_0 \Vert^4_\frac14) + \frac14  {\EE \max_{1\le m\le M} \lVert \ru^m \rVert^4_\frac14 }. 
		\end{equation*}
		Thus, summing up we have shown that there exists a constant $C>0$ such that
		\begin{equation}\label{STAB-DA-max}
		\begin{split}
		{\EE \max_{1\le m\le M} \lVert \ru^m \rVert^4_\frac14 } \le C\EE(1+ \lvert \bu_0 \rvert^{16} + \Vert \bu_0 \Vert^4_\frac14).
		\end{split}
		\end{equation}
		
		Now, we estimate $\EE\left(\sum_{j=0}^{M-1}\lVert \ujp -\uj \rVert^2_\frac14 \right)^2 + \EE\left(k\sum_{j=1}^{M}\lVert \uj\rVert^2_\frac34 \right)^2$. To do this we first observe that from \eqref{EQ:proof-1} we infer that
		\begin{equation}
		\begin{split}
		\biggl(\frac12 \sum_{j=0}^{M-1}\lVert \ujp -\uj \rVert^2_\frac14 \biggr)^2+ \biggl(2k \sum_{j=0}^{M-1}\lVert \ujp  \rVert^2_\frac14\biggr)^2\le C \biggl(k\sum_{j=0}^{M-1} \lvert \uj \rvert^4 \lVert \ujp \rVert^2_\frac12\biggr)^2 \\+ C \biggl(\sum_{j=0}^{M-1} \lVert G(\uj)\Delta_{j+1}W \rVert^2_\frac14\biggr)^2 + C\biggl( \sum_{j=0}^{M-1} \langle \rA^\frac14 \uj , \rA^\frac14 {\pi_N}G(\uj)\Delta_{j+1}W\rangle\biggr)^2.
		\end{split}
		\end{equation} 
		Then, using the same strategies to estimate the $J_i$-s (or $\mathrm{J}_i$ ), the sum of the three terms in the right hand side of the above quality can be bounded from above by 
		\begin{equation*}
		\biggl[\EE \left(\max_{0\le j\le M} \lvert \uj \rvert^{16}\right)\biggr]^\frac12 \biggl[\EE \left(k\sum_{j=1}^M \lVert \uj \rVert^2_\frac12 \right)^4\biggr]^\frac12+ 
		C M k^2 \sum_{j=0}^M \EE \lVert \uj \rVert^4_\frac14 + C k \sum_{j=0}^M \EE \lVert \uj \rVert^4_\frac14,  
		\end{equation*}
		which along with the estimate for  ${\EE \max_{1\le m\le M} \lVert \ru^m \rVert^4_\frac14 }$ and the inequalities \eqref{STAB-l2-A} and \eqref{STAB-l2-B}  implies that
		\begin{equation}
		\begin{split}
		\biggl(\frac12 \sum_{j=0}^{M-1}\lVert \ujp -\uj \rVert^2_\frac14 \biggr)^2+ \biggl(2k \sum_{j=0}^{M-1}\lVert \ujp  \rVert^2_\frac14\biggr)^2\le \EE(1+ \lvert \bu_0 \rvert^{16} + \Vert \bu_0 \Vert^4_\frac14).
		\end{split}
		\end{equation}
		The last estimate along with \eqref{STAB-DA-max} completes the proof  of \eqref{STAB-DA-2} and hence the whole proposition.
	\end{proof}
	\section{Error analysis of the numerical scheme \eqref{Algo}: Proof of Theorem \ref{THM-MAIN}}\label{Sec-Proof}
	This section is devoted to the analysis of the error $\bee_j=\bu(t_j)-\uj$ at {the time $t_j$} between the exact solution $\bu$ of \eqref{spdes} and the approximate solution given by \eqref{Algo}. Since the precise statement of the convergence rate is already given in Theorem \ref{THM-MAIN}, we proceed directly to the promised proof of Theorem \ref{THM-MAIN}.

	Before giving the proof of Theorem \ref{THM-MAIN} we state and prove the following important result.
	\begin{lem}\label{LEM-Essence}
		Let $\beta$ be as in Theorem \ref{THM-MAIN}. Then,
		\begin{enumerate}[label=(\roman{*})]
			\item \label{Ess-i} 	there exists a constant $C_7>0$ such that 
			\begin{equation}
			\mathbb{E}{\Vert \bu(t)-\bu(s)\Vert^2_{\beta}} \le C_7 [(t-s)^{2-2\beta}+(t-s)^{2(\frac14-\beta)}+(t-s)],
			\end{equation}
			for any $t,s \ge 0$ and $t\neq s$.
			\item \label{Ess-ii} There also exists a positive constant  $C_8$ such that 
			\begin{equation}
			\EE \int_s^t \Vert \bu(t)-\bu(r)\Vert^2_{\frac12+\beta} dr \le  C_8 \left( (t-s)^{\frac32-2\beta}+(t-s)^{2(\frac14-\beta)}+(t-s)^{2-2\beta}\right),
			\end{equation}
			for any $t>s\ge 0$.
		\end{enumerate}
	\end{lem}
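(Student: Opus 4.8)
The plan is to run everything off the mild formulation of Remark~\ref{REM-Mild}. For $t>s\ge 0$ I write
\begin{equation*}
\bu(t)-\bu(s)=(e^{-(t-s)\rA}-I)\bu(s)+\int_s^t e^{-(t-r)\rA}\mb(\bu(r),\bu(r))\,dr+\int_s^t e^{-(t-r)\rA}G(\bu(r))\,dW(r)=:\mathrm{I}_1+\mathrm{I}_2+\mathrm{I}_3,
\end{equation*}
and estimate the three pieces separately: in $\ve_\beta$ for part~\ref{Ess-i}, and in $\ve_{\frac12+\beta}$ (after integrating in $r$ over $(s,t)$, with $s$ replaced by the running variable) for part~\ref{Ess-ii}. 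The only analytic inputs are the two spectral estimates, valid since $\rA$ is self-adjoint positive, $\Vert\rA^\mu e^{-\tau\rA}\Vert_{\mathscr{L}(\h)}\le C\tau^{-\mu}$ for $\mu\ge0$ and $\Vert(I-e^{-\tau\rA})\rA^{-\mu}\Vert_{\mathscr{L}(\h)}\le C\tau^{\mu}$ for $\mu\in[0,1]$, together with the moment bounds \eqref{Eq-9}--\eqref{Eq-10} of Proposition~\ref{SHELL} and It\^o's isometry.

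For part~\ref{Ess-i}: since $\bu(s)\in\ve_\frac14$, the second spectral estimate with $\mu=\frac14-\beta\in(0,\frac14]$ gives $\Vert\mathrm{I}_1\Vert_\beta\le C(t-s)^{\frac14-\beta}\Vert\bu(s)\Vert_\frac14$, whence $\EE\Vert\mathrm{I}_1\Vert_\beta^2\le C(t-s)^{2(\frac14-\beta)}$ by \eqref{Eq-10}. For $\mathrm{I}_2$ I apply Assumption~\ref{Assum-B1p} with $\theta=0,\gamma=\frac14$ to get $\vert\mb(\bu(r),\bu(r))\vert\le C\Vert\bu(r)\Vert_\frac14^2$, and the first estimate with $\mu=\beta$, so $\Vert\mathrm{I}_2\Vert_\beta\le C\int_s^t(t-r)^{-\beta}\Vert\bu(r)\Vert_\frac14^2\,dr\le C(t-s)^{1-\beta}\sup_r\Vert\bu(r)\Vert_\frac14^2$, giving $\EE\Vert\mathrm{I}_2\Vert_\beta^2\le C(t-s)^{2-2\beta}$ by \eqref{Eq-10}. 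For $\mathrm{I}_3$ I write $\rA^\beta e^{-(t-r)\rA}=\rA^{\beta-\frac14}e^{-(t-r)\rA}\rA^\frac14$; as $\beta-\frac14<0$ the first factor is bounded, and using Assumption~\ref{nonlin-g}, Remark~\ref{REM-G}-\ref{REM-G-ii} and $\trace Q<\infty$ the isometry yields $\EE\Vert\mathrm{I}_3\Vert_\beta^2\le C\int_s^t(1+\EE\Vert\bu(r)\Vert_\frac14^2)\,dr\le C(t-s)$. Summing gives \ref{Ess-i}.

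For part~\ref{Ess-ii} the drift and noise terms behave as above but now produce singular kernels. For the drift, Assumption~\ref{Assum-B1p} and the first estimate with $\mu=\frac12+\beta$ give the factor $(t-\rho)^{-(\frac12+\beta)}$, integrable since $\beta<\frac12$, and a second integration in $r$ produces $(t-s)^{2-2\beta}$. For the stochastic term, the isometry together with $\rA^{\frac12+\beta}e^{-(t-\rho)\rA}=\rA^{\frac14+\beta}e^{-(t-\rho)\rA}\rA^\frac14$ produces $(t-\rho)^{-(\frac12+2\beta)}$, which is integrable \emph{precisely because} $\beta<\frac14$, and the double integration yields $(t-s)^{\frac32-2\beta}$.

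The delicate point is the term $\mathrm{J}_1=(e^{-(t-r)\rA}-I)\bu(r)$ measured in the strong norm $\ve_{\frac12+\beta}$. Here the pointwise bound $\bu(r)\in\ve_\frac14$ is useless, since $e^{-\tau\rA}-I$ cannot gain the required $\frac14+\beta$ derivatives out of $\ve_\frac14$-data with any positive power of $\tau$ (the corresponding supremum over the spectrum diverges). The way around this is to exploit the extra space-time regularity $\bu\in\el^2(0,T;\ve_\frac34)$ with finite moments, available from \eqref{Eq-10} precisely because $\beta<\frac14$ forces $\frac34>\frac12+\beta$. The second spectral estimate with $\mu=\frac14-\beta$ then gives $\Vert\mathrm{J}_1\Vert_{\frac12+\beta}\le C(t-r)^{\frac14-\beta}\Vert\bu(r)\Vert_\frac34$; since the exponent is nonnegative I bound $(t-r)^{2(\frac14-\beta)}\le(t-s)^{2(\frac14-\beta)}$ and integrate the remaining $\Vert\bu(r)\Vert_\frac34^2$ against its $\el^1$-in-time bound, producing $(t-s)^{2(\frac14-\beta)}$ and completing \ref{Ess-ii}. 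This reliance on $\ve_\frac34$-regularity rather than the ambient $\ve_\frac14$-regularity, and the integrability threshold $\frac12+2\beta<1$ for the noise, are the main obstacles, and both make the hypothesis $\beta\in[0,\frac14)$ essential.
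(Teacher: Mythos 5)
Your proposal is correct and follows essentially the same route as the paper: the same mild-formulation decomposition from Remark~\ref{REM-Mild}, the same two spectral smoothing estimates, It\^o's isometry with $G$ factored through $\ve_{\frac14}$, and the moment bounds \eqref{Eq-9}--\eqref{Eq-10} of Proposition~\ref{SHELL}. In particular, your ``delicate point'' in part~\ref{Ess-ii} --- handling $(e^{-(t-r)\rA}-I)\bu(r)$ in $\ve_{\frac12+\beta}$ via the integrated $\ve_\frac34$-regularity from \eqref{Eq-10} rather than the pointwise $\ve_\frac14$-bound, gaining $(t-r)^{\frac14-\beta}$ --- is exactly the paper's mechanism (its displays contain harmless typos writing $\bu(s)$ for $\bu(r)$ there), and your identification of $\frac12+2\beta<1$ as the integrability threshold for the noise matches the paper's use of $\beta<\frac14$.
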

	\begin{proof}[Proof of Lemma \ref{LEM-Essence}]
		As in the statement of the lemma we divide the proof into two parts.
		
		\textit{Proof of item \ref{Ess-i}.} Let $t,s\in [0,T]$ such that $t\neq s$. Without loss of generality we assume that $t>s$. Thanks to \eqref{Eq-REM} of Remark \ref{REM-Mild} we have 
		\begin{equation*}
		\begin{split}
		\Vert \bu(t)-\bu(s)\Vert^2_{\beta}\le C \vert \rA^{\beta-\frac14} (\mathrm{I}- e^{-(t-s)\rA})\rA^\frac14 \bu(s)\vert^2+C\biggl \vert \int_s^t A^\beta e^{-(t-r)\rA} B(\bu(r),\bu(r)) dr\biggr \vert^2
		\\
		+ C \biggl\vert\int_s^t \rA^\beta e^{-(t-r)\rA} G(\bu(r))dW(r)\biggr\vert^2.
		\end{split}
		\end{equation*}
		Before proceeding further we recall that there exists a constant $C>0$ such that for any $\gamma>0$ and $t\ge 0$, we have
		
		\begin{equation*} 
		\Vert A^{-\gamma }(\mathrm{I}-e^{-tA}) \Vert_{\mathscr{L}(\h)} \le C t^\gamma. 
		\end{equation*}
		Applying this inequality, the H\"older inequality, Assumption \ref{Assum-B1p}, the It\^o isometry and Assumption \ref{nonlin-g} imply
		\begin{equation*}
		\begin{split}
		\EE	\left(\Vert \bu(t)-\bu(s)\Vert^2_{\beta}\right)\le & C (t-s) \EE \biggl(\int_s^t (t-r)^{-2\beta} \Vert \bu(r)\Vert^2_{\frac14} 
		\Vert \bu(r)\Vert^2_{\frac12-\frac14}  dr\biggr)
		\\
		& +C (t-s)^{2(\frac14-\beta)}\EE \Vert \bu(s)\Vert^2_{\frac14} + \EE \int_s^t  \vert e^{-(t-r)\rA} \rA^\beta G(\bu(r))\vert^2dr
		\\
		\le &  C (t-s)^{2-2\beta}\EE \left(\sup_{r\in [s,t]} \Vert \bu(r)\Vert^2_{\frac14} 
		\sup_{r\in [s,t]}\Vert \bu(r)\Vert^2_{\frac12-\frac14} \right)\\
		& +C [(t-s)^{2(\frac14-\beta)}+(t-s) ]\EE \left(\sup_{r\in [s,t]}\Vert \bu(r)\Vert^4_{\frac14}\right),
		\end{split}
		\end{equation*}
		from which along with \eqref{Eq-10} we easily infer that
		\begin{equation*}
		\EE	\left(\Vert \bu(t)-\bu(s)\Vert^2_{\beta}\right)\le C [(t-s)^{2-2\beta}+(t-s)^{2(\frac14-\beta)}+(t-s)].
		\end{equation*} 
		Thus, we have just finished the proof of the first part of the lemma.
		
		\textit{Proof of item \ref{Ess-ii}.} 
		Let $t>s\ge 0$. Using  \eqref{Eq-REM} of Remark \ref{REM-Mild}, it is not difficult to see that 
		\begin{equation*}
		\begin{split}
		\int_s^t \Vert \bu(t)-\bu(r)\Vert_{\frac12+\beta}^2 dr \le &
		C \int_s^t \biggr(\int_r^t \vert \rA^{\frac12 +\beta } e^{-(t-\tau)\rA}\mb(\bu(\tau),\bu(\tau)) \vert d\tau \biggr)^{2} dr\\
		&\quad + C \int_s^t \biggl\vert \int_r^t\rA^{\frac14+\beta} e^{-(t-\tau)\rA}[\rA^\frac14 G(\bu(\tau))]dW(\tau)\biggr\vert^2 dr \\
		&\quad \quad 	+C \int_s^t \vert \rA^{\beta -\frac14} (e^{-(t-r)A} -\mathrm{I})\rA^{\frac34} \bu(s)\vert^2 dr,
		\end{split}
		\end{equation*}
		from which and the assumption on $\mb$ we infer that 
		\begin{equation*}
		\begin{split}
		\int_s^t \Vert \bu(t)-\bu(r)\Vert_{\frac12+\beta}^2 dr \le & C
		\sup_{0\le \tau \le T}\left(\Vert \bu(\tau)\Vert^2_{\frac14}\Vert \bu(\tau) \Vert^2_{\frac12-\frac14 }\right) \int_s^t \biggr(\int_r^t (t-\tau)^{-\frac12 -\beta }  d\tau \biggr)^{2} dr\\
		&\quad + C \int_s^t \biggl\vert \int_r^t\rA^{\frac14+\beta} e^{-(t-\tau)\rA}[\rA^\frac14 G(\bu(\tau))]dW(\tau)\biggr\vert^2 dr \\
		&\quad \quad 	+C \int_s^t (t-r)^{2(\frac14-\beta)} \Vert \bu(s)\Vert^2_{\frac34} dr.
		\end{split}
		\end{equation*}
		Taking the mathematical expectation and using \eqref{Eq-10} yield
		\begin{equation*}
		\begin{split}
		\EE\left( \mathds{1}_{\Omega_k}\int_s^t \Vert \bu(t)-\bu(r)\Vert_{\frac12+\beta}^2 dr\right) \le & C  (t-s)^{2-2\beta}+ C (t-s)^{2(\frac14-\beta)} \EE\int_0^T \Vert \bu(r)\Vert^2_{\frac12+\beta} dr
		\\
		&\quad + \int_s^t \EE\biggl(\biggl\vert \int_r^t\rA^{\frac14+\beta} e^{-(t-\tau)\rA}\rA^\frac14 G(\bu(\tau))dW(\tau)\biggr\vert^2 \biggl)dr. 
		\end{split}
		\end{equation*}
		Owing to the It\^o isometry, the assumption \ref{nonlin-g} and \eqref{Eq-10}, we obtain 
		\begin{equation*}
		\begin{split}
		\EE\left( \int_s^t \Vert \bu(t)-\bu(r)\Vert_{\frac12+\beta}^2 dr\right) \le \EE\left(\sup_{0\le \tau \le T}(1+ \Vert \bu(\tau)\Vert^2_\frac14)\right) \int_s^t  \int_r^t (t-\tau)^{-\frac12-2\beta} d\tau dr\\
		+ (t-s)^{2-2\beta}+ (t-s)^{2(\frac14-\beta)},
		\end{split}
		\end{equation*}		
		{from which altogether with \eqref{Eq-10} we infer that  there exists a constant $C>0$ such that}
		\begin{equation*}
		\begin{split}
		\EE\left( \int_s^t \Vert \bu(t)-\bu(r)\Vert_{\frac12+\beta}^2 dr\right) \le 
		C  (t-s)^{2-2\beta}+ C (t-s)^{2(\frac14-\beta)}+ C (t-s)^{\frac32-2\beta},
		\end{split}
		\end{equation*}	
		for any $t>s\ge 0$.
	\end{proof}

	We now give the promised proof of Theorem \ref{THM-MAIN}.
	\begin{proof}[Proof of Theorem \ref{THM-MAIN}]
		Since the embedding $\ve_\beta\subset \h$ is continuous for any $\beta \in (0,\frac14)$, it is sufficient to prove the main theorem for $\beta \in (0,\frac14)$.
		
		\noindent Note that the numerical scheme \eqref{Algo} is equivalent to 
		\begin{equation}\label{Int-algo}
		\begin{split}
		(\ujp, w)+\int_{t_j}^{t_{j+1}} \langle \rA \ujp + \pi_N \mb (\uj, \ujp), w\rangle ds = (\uj, w)+
		\int_{t_j}^{t_{j+1}} \langle w, \pi_N G(\uj) dW(s)\rangle
		\end{split}
		\end{equation}
		for any $j \in \{1,\ldots,M\}$ and $w\in \ve$. Integrating \eqref{spdes} and subtracting the resulting equation and the identity \eqref{Int-algo} term by term yield
		\begin{equation}\label{Eq-Error}
		\begin{split}
		(\erjp-\erj, w)+\int_{t_j}^{t_{j+1}} \langle \rA \erjp + \rA(\bu(s)-\bu(t_{j+1}))+ \mb(\bu(s),\bu(s))-\pi_N \mb(\uj, \ujp), w\rangle ds \\= \int_{t_j}^{t_{j+1}} \langle w, [G(\bu(s))-\pi_N G(\uj)]dW(s)\rangle  .
		\end{split}
		\end{equation}
		Observe that if $\bv\in D(\rA^{\frac12+\alpha})$ with $\alpha>\beta $, then $\rA^{2\beta}\bv\in D(\rA^{\frac12 +\alpha-\beta})\subset D(\rA^{\frac12 -\alpha}) $, ${\rA\bv}\in D(\rA^{\alpha-\frac12})$ and the duality product $\langle \rA \bv, \rA^{2\beta}\bv \rangle $ is meaningful. Thus, we are permitted to take $w=2\rA^{2\beta}\erjp$ in \eqref{Eq-Error} and derive that
		\begin{equation*}
		\begin{split}
		\Vert \erjp\Vert^2_{\beta}-\Vert \erj\Vert^2_{\beta}+\Vert \erjp-\erj\Vert^2_{\beta}+2 k \Vert \erjp \Vert^2_{\frac12+\beta}- 2\int_{t_j}^{t_{j+1}} \Vert \rA^{\frac12+\beta}(\bu(s)-\bu(t_{j+1}))\Vert_{\frac12 +\beta }\Vert \erjp \Vert_{\frac12 +\beta} ds\\
		\le 2 \int_{t_j}^{t_{j+1}}\left \vert ( \rA^{\beta-\frac12} [\mb(\bu(s),\bu(s))-\pi_N \mb(\uj, \ujp)], \rA^{\frac12 +\beta }\erjp) \right\vert ds\\  +2\int_{t_j}^{t_{j+1}} \langle \rA^{2\beta}\erjp, [G(\bu(s))-\pi_N G(\uj)]dW(s) \rangle,
		\end{split}
		\end{equation*}
		where we have used the identity $( \bv-\bx, 2\rA^{2\beta}\bv)= \Vert \bv \Vert^2_\beta -\Vert \bx \rVert^2_\beta + \Vert \bv-\bx\Vert^2_\beta $.
		Now, by using the identity $\bv=(\pi_N +[\mathrm{I}-\pi_N] )\bv$, the fact that 
		\begin{equation*}
			\begin{split}
			\mb(\bu(s),\bu(s))-\pi_N \mb(\uj, \ujp)= &\mb(\bu(s),\bu(s))-\pi_N \mb(\bu(t_j),\bu(t_{j+1}))
			\\
			&+\pi_N \mb(\bu(t_j),\bu(t_{j+1}))-  \mb(\uj, \ujp),
			\end{split}
		\end{equation*}
		the Cauchy-Schwarz inequality, the Cauchy inequality $ab\le \frac{a^2}4+b^2$, $a,b>0$ and Assumption \ref{NB-a} we obtain  
		\begin{equation}\label{Eq-Err-1}
		\begin{split}
		\Vert \erjp\Vert^2_{\beta}-\Vert \erj\Vert^2_{\beta}+\Vert \erjp-\erj\Vert^2_{\beta}+k \Vert \erjp \Vert^2_{\frac12+\beta}\le  
		2 \mathcal{L}_j+16 C_0^2 \sum_{i=1}^5 \mathscr{N}_{j,i} +2 \mathscr{W}_j,
		\end{split}
		\end{equation}
		where for each $j\in \{0,\ldots,M-1\}$ the symbols $\mathcal{L}_j$, $\mathscr{N}_{j,i}$, $i=1,\ldots, 5$, and $\mathscr{W}_j$ are defined by
		\begin{align*}
		& \mathcal{L}_j:=\int_{t_j}^{t_{j+1}} \Vert \bu(s)-\bu(t_{j+1})\Vert^2_{\frac12 +\beta } ds, \\
		& \mathscr{N}_{j,1}:=\int_{t_j}^{t_{j+1}} \Vert \bu(s)-\bu(t_{j+1})\Vert^{2}_{\beta} (\Vert \uj\Vert^2_\beta+ \Vert \bu(s)\Vert^2_\beta ) ds, \\ 
		& \mathscr{N}_{j,2} :=\int_{t_j}^{t_{j+1}} \Vert \erjp\Vert^{2}_{\beta} (\Vert \ujp\Vert^2_\beta + \Vert \bu(s)\Vert^2_\beta) ds,\\ &
		\mathscr{N}_{j,3}:=\int_{t_j}^{t_{j+1}} \Vert \bu(s)-\bu(t_j)\Vert^{2}_{\beta} (\vert \ujp\vert^2+ \vert \bu(s)\vert^2) ds,\\
		& \mathscr{N}_{j,4} :=\int_{t_j}^{t_{j+1}} \Vert \erj\Vert^{2}_{\beta} (\vert \ujp\vert^2+ \vert \bu(s)\vert^2) ds,\\ &  \mathscr{N}_{j,5}:= \int_{t_j}^{t_{j+1}} \Vert (\mathrm{I}-\pi_N) \mb(\bu(s),\bu(s))\Vert^2_{\beta -\frac12} ds,\\
		& {\mathscr{W}_j:= \int_{t_j}^{t_{j+1}}\langle \rA^{2\beta}\erjp , [G(\bu(s))- \pi_NG(\uj)]dW(s) \rangle.} 
		\end{align*}
		
		Let $m\in [1,M]$ an arbitrary integer. Summing \eqref{Eq-Err-1} from $j=0$ to $m-1$ , multiplying by $\mathds{1}_{\Omega_k}$, taking the mathematical expectation, and finally  taking the maximum over $m\in [1,M]$ imply
		\begin{equation*}
		\begin{split}
		{\max_{1\le m\le M}} \EE\left[\mathds{1}_{\Omega_k} \Vert \mathbf{e}^m\Vert^2_{\beta}\right]+\sum_{j=0}^{M-1} \EE\left[\mathds{1}_{\Omega_k}\Vert \erjp-\erj\Vert^2_{\beta}\right]+k \sum_{j=1}^M\EE\left[\mathds{1}_{\Omega_k}\Vert \erj \Vert^2_{\frac12+\beta}\right]\\\le  
		\EE \Vert \mathbf{e}^0\Vert^2_{\beta}+16 C^2_0 \sum_{j=0}^{M-1}\sum_{i=1}^5  \EE\left[\mathds{1}_{\Omega_k} \mathscr{N}_{j,i}\right]+2\sum_{j=0}^{M-1} \EE\left[\mathds{1}_{\Omega_k}\mathcal{L}_j\right] +2 \max_{1\le m \le M} \sum_{j=0}^{m-1}\EE\left[\mathds{1}_{\Omega_k}\mathscr{W}_j\right].
		\end{split}
		\end{equation*}
		Invoking the two items of Lemma \ref{LEM-Essence} and the fact that $\Vert \bu(s)\Vert^2_{\beta} + \max_{0\le j\le M}\Vert \uj \Vert^2_{\beta}\le f(k)$ on the set $\Omega_k$ we infer that 
		\begin{equation}\label{Eq-Err-2}
		\begin{split}
		\max_{1\le m\le M} \EE\left[\mathds{1}_{\Omega_k}\Vert \mathbf{e}^m\Vert^2_{\beta}\right]+\sum_{j=0}^{M-1} \EE\left[\mathds{1}_{\Omega_k}\Vert \erjp-\erj\Vert^2_{\beta}\right]+k \sum_{j=1}^M\EE\left[\mathds{1}_{\Omega_k}\Vert \erj \Vert^2_{\frac12+\beta}\right]\\ \le \EE \Vert \mathbf{e}^0\Vert^2_{\beta}+16 C_0^2 kf(k)\sum_{j=0}^{M-1}\EE\left(\mathds{1}_{\Omega_k}[\Vert \mathbf{e}^{j+1}\Vert^2_{\beta}+\Vert \mathbf{e}^j\Vert^2_{\beta}]\right)+ 2 C_8 f(k)M k [\Psi(k)+k^{1+\frac12-\beta}]\\+ 64 C_0^2 C_8 [f(k)]^2 M k [\Psi(k)+k]+16C_0^2 \sum_{j=0}^{M-1}\mathscr{N}_{j,5}+2 \max_{1\le m\le M}\sum_{j=0}^{m-1}\EE\left[\mathds{1}_{\Omega_k}\mathscr{W}_j\right],
		\end{split}
		\end{equation}
		where $\psi(k):=k^{2-2\beta}+ k^{2(\frac14-\beta)}$. Now, thanks to Assumption \ref{Assum-B1p} we have
		\begin{align*}
		\mathds{1}_{\Omega_k} \int_{t_j}^{t_{j+1}} \Vert (\textrm{I}-\pi_N) \mb(\bu(s), \bu(s))\Vert^2_{\beta -\frac12} ds&= \mathds{1}_{\Omega_k}  \int_{t_j}^{t_{j+1}} \sum_{n=N+1}^{\infty} \lambda_n^{2\beta-1} \lvert \mb_n(\bu(s), \bu(s))\rvert^2 ds
		\\
		&\le \lambda_{N}^{2\beta-1} \int_{t_j}^{t_{j+1}} \mathds{1}_{\Omega_k}  \sum_{n=0}^\infty \vert \mb_n(\bu(s), \bu(s)) \rvert^2 ds
		\\
		&\le \lambda_N^{2\beta-1} \int_{t_j}^{t_{j+1}} \mathds{1}_{\Omega_k}  \vert \mb(\bu(s), \bu(s) ) \vert^2 ds
		\\
		&\le C \lambda_N^{2\beta-1} k \sup_{s\in [0,T]} \Vert \bu(s)\Vert^4_\frac14.
		\end{align*}
		Hence, owing to \eqref{Eq-10} we find a constant $C>0$ such that
		$$  \EE \mathds{1}_{\Omega_k} \int_{t_j}^{t_{j+1}} \Vert (\textrm{I}-\pi_N) \mb(\bu(s), \bu(s))\Vert^2_{\beta -\frac12} ds\le C \lambda_N^{2\beta-1}k.$$
		Notice also that
		\begin{align*}
		& \sum_{j=0}^{M-1}\Vert \erjp\Vert^2_{\beta}(\Vert \ujp\Vert^2_\beta + \Vert \bu(s)\Vert^2_\beta) \\
		&\quad \quad \quad = \sum_{j=0}^{M-1} \Vert \ujp -\uj+\uj-\bu(t_j)+\bu(t_j) -\bu(t_{j+1})\Vert^2_{\beta} (\Vert \ujp\Vert^2_\beta + \Vert \bu(s)\Vert^2_\beta)
		\\
		&\quad \quad \quad  \le 3 \sum_{j=0}^{M-1}\left(\Vert \ujp-\uj \Vert^2_\beta+ \Vert \mathbf{e}^j \Vert^2_\beta + \Vert \bu(t_j)-\bu(t_{j+1})\Vert^2_\beta\right)(\max_{0\le j\le M}\Vert \ujp\Vert^2_\beta + \Vert \bu(s)\Vert^2_\beta).
		\end{align*}
		Therefore,
		\begin{equation*}
		\begin{split}
		\EE\left(\mathds{1}_{\Omega_k} \sum_{j=0}^{M-1}\Vert \erjp\Vert^2_{\beta}(\Vert \ujp\Vert^2_\beta + \Vert \bu(s)\Vert^2_\beta)\right)  - Cf(k) \EE \sum_{m=0}^{M-1}\Vert \mathbf{e}^j \Vert^2_\beta + f(k)C_7 [\psi(k)+k]\\
		\le  C \left(\EE \left( \sum_{j=0}^{M-1} \Vert \ujp-\uj \Vert^2_\beta\right)^2\right)^\frac12 \left(\EE\max_{0\le j\le M} \Vert \uj \Vert^4_\beta+ \EE \sup_{s\in [0,T]} \Vert \bu(s)\Vert^4_\beta\right)^\frac12.
		\end{split}
		\end{equation*}
		As long as the initial data is concerned, we have 
		\begin{align}
		\EE \Vert \mathbf{e}^0\Vert^2_\beta&= \Vert [\pi_N+(\mathrm{I}-\pi_N)]\bu_0-\pi_N \bu_0\Vert^2_\beta
		\\
		&\le \sum_{n=N+1}^\infty \lambda^{2(\beta-\frac14)}_n \lambda_N^{\frac12} \vert \bu_{0,n}\vert^2
		\\
		&\le \lambda^{2(\beta-\frac14)}_N \Vert \bu_0\Vert^2.
		\end{align}
		From all the above observations, \eqref{Eq-Err-2}, Assumption \ref{Assum-B1p},  \eqref{EQ:Stabl1}-\eqref{STAB-l2-B} and \eqref{STAB-DA-2} we infer that there exists a constant $C_9>0$ such that
		\begin{equation}\label{Eq-Err-2-B}
		\begin{split}
		\max_{1\le m\le M}\EE\left[\mathds{1}_{\Omega_k}\Vert \mathbf{e}^m\Vert^2_{\beta}\right]+\sum_{j=0}^{M-1} \EE\left[\mathds{1}_{\Omega_k}\Vert \erjp-\erj\Vert^2_{\beta}\right]+k \sum_{j=1}^M\EE\left[\mathds{1}_{\Omega_k}\Vert \erj \Vert^2_{\frac12+\beta}\right]
		\\ 
		\le C_9 f(k) [\Psi(k)+k^{1+\frac12-\beta}]+ C_9 f(k) [\Psi(k)+k]+C_9\left( \lambda_N^{2\beta-1} +\lambda^{2(\beta-\frac14)}\right)+2 \max_{1\le m \le M }\sum_{j=0}^{m-1}\EE\left[\mathds{1}_{\Omega_k}\mathscr{W}_j\right]
		\\
		+ C_9 kf(k) \sum_{m=0}^{M-1} \max_{1\le j\le m}\EE \left[\mathds{1}_{\Omega_k} \Vert \erj\Vert^\beta \right]+ 16 C^2_0 kf(k) \max_{1\le m \le M} \EE\left[\mathds{1}_{\Omega_k} \Vert\mathbf{e}^m\Vert^2_{\beta} \right].
		\end{split}
		\end{equation}
		
		Now we deal with the term containing $\mathscr{W}_j$. After subtracting from $\mathscr{W}_j$ the martingale $M_0$ with mean zero defined by
		\begin{align*}
		M_0={ \int_{t_j}^{t_{j+1}}\langle \rA^{\beta}\erjp, \rA^{\beta} [G(\bu(s))- \pi_NG(\uj)]dW(s) \rangle,}
		\end{align*}
		then taking the mathematical expectation, using the Young inequality and the It\^o isometry give 
		\begin{align*}
		\mathbb{E}\mathds{1}_{\Omega_k}\mathscr{W}_{j} &\leq C\mathbb{E}\mathds{1}_{\Omega_k}\bigg\lVert\int_{t_j}^{t_{j+1}} [G(\bu(s))- \pi_NG(\uj)]dW(s)\bigg\rVert^2_\beta+ \frac{1}{4}\mathbb{E}\mathds{1}_{\Omega_k}\lVert \erjp-\erj \rVert^2_\beta
		\\
		&\leq C\int_{t_j}^{t_{j+1}} \mathbb{E}\mathds{1}_{\Omega_k}\lVert G(\bu(s))- \pi_NG(\uj)\rVert^2_{\cL(\mathscr{H}, V_\beta) }ds+ \frac{1}{4}\mathbb{E}\mathds{1}_{\Omega_k}\lVert \erjp-\erj \rVert^2_\beta
		\\
		&\leq \sum_{i=1}^3\EE[\mathds{1}_{\Omega_k}\mathscr{W}_{j,i}]+\frac{1}{4}\mathbb{E}\mathds{1}_{\Omega_k}\lVert \erjp-\erj \rVert^2_\beta,
		\end{align*}
		where the first two symbols $\mathscr{W}_{j,i}$, $i\in\{1,2\}$ satisfy the following equalities and inequalities  
		\begin{align}
		\EE[\mathds{1}_{\Omega_k}\mathscr{W}_{j,1}] 	&= C\int_{t_j}^{t_{j+1}} \mathbb{E}\mathds{1}_{\Omega_k}\lVert \pi_N G(\bu(s))- \pi_NG(\bu(t_{j}))\rVert^2_{\cL(\mathscr{H}, V_\beta)} ds
		\nonumber\\
		&\leq CC_3^2 \int_{t_j}^{t_{j+1}} \mathbb{E} \lVert  \bu(s)- \bu(t_{j})\rVert_{\beta}^2 ds
		\nonumber \\
		&\leq CC_3^2C_7^2 k[k^{2-2\beta}+k^{2(\frac14-\beta)}+k];\nonumber
		\end{align}
		\begin{align}
		\EE[\mathds{1}_{\Omega_k}\mathscr{W}_{j,2}] 	&= C\int_{t_j}^{t_{j+1}} \mathbb{E}\mathds{1}_{\Omega_k}\lVert \pi_NG(\bu(t_{j}))-\pi_NG(U^j)\rVert^2_{\cL(\mathscr{H}, V_\beta) }ds
		\nonumber\\
		&\leq CC_3^2 k \mathbb{E}\mathds{1}_{\Omega_k} \lVert  \mathbf{e}^j \rVert_{\beta}^2,\nonumber 
		\end{align}
		where Lemma \ref{LEM-Essence}  was used to get the last line.
		
		\noindent The third term $\mathscr{W}_{j,3}$ satisfies 
		\begin{align*}
		\EE[\mathds{1}_{\Omega_k}\mathscr{W}_{j,3}]=&\int_{t_j}^{t_{j+1}} \mathbb{E}\left(\mathds{1}_{\Omega_k}\lVert (\mathrm{I}-\pi_N) G(\bu(s)) \rVert^2_{\cL(\mathscr{H}, V_\beta)}\right)ds
		\\
		=& \int_{t_j}^{t_{j+1}} \mathbb{E}\left(\mathds{1}_{\Omega_k} \sum_{n=N+1}^{\infty} \lambda_n^{2(\beta-\frac14)} \lambda_n^{\frac12} \sup_{h \in \mathscr{H}, \Vert h\Vert_{\mathscr{H}\le 1}}\lvert G_n(\bu(s))h\rvert^2\right) ds
		\\
		\le&  \lambda_{N}^{2(\beta-\frac14)} \int_{t_j}^{t_{j+1}} \mathbb{E}\left(\mathds{1}_{\Omega_k} \sum_{n=1}^{\infty}  \lambda_n^{\frac12} \sup_{h \in \mathscr{H}, \Vert h\Vert_{\mathscr{H}\le 1}}\lvert G_n(\bu(s))h\rvert^2\right) ds
		\\
		\le & \lambda_{N}^{2(\beta-\frac14)} k \mathbb{E}\left(\mathds{1}_{\Omega_k} \sup_{s\in[0,T]} \Vert G(\bu(s))\Vert^2_{\cL(\mathscr{H},\ve_\frac14)}\right).
		\end{align*}
		Now, using Assumption \ref{nonlin-g} and the estimate \eqref{Eq-10} we infer that 
		\begin{equation}
		\EE[\mathds{1}_{\Omega_k}\mathscr{W}_{j,3}] \le C C_3^2 \lambda_{N}^{2(\beta-\frac14)} k,\nonumber 
		\end{equation}
		for any $j\in [0,M]$.
		Thus, summing up we have obtained that
		\begin{equation*}
		\begin{split}
		2 \max_{1\le m \le M} \sum_{j=0}^{m-1}\EE\left[\mathds{1}_{\Omega_k}\mathscr{W}_j\right]\le C C_3^2 C_7^2 T [\psi(k)+k]+ C C_3^2 T \lambda_{N}^{2(\beta-\frac14)}\\ + C C_3^2 k\sum_{m=0}^{M-1} \max_{1\le j\le m }\EE[\mathds{1}_{\Omega_k}\Vert \mathbf{e}^j\Vert^2_\beta ]+ \frac12 \sum_{m=0}^{M-1}\EE\left(\mathds{1}_{\Omega_k} \Vert \mathbf{e}^{m+1}-\mathbf{e}^m \Vert^2_\beta \right).
		\end{split}
		\end{equation*}
		By plugging this last estimate into \eqref{Eq-Err-2}, we  find a constant $C_{10}>0$ such that 
		\begin{equation*}
		\begin{split}
		\max_{1\le m\le M} \EE\left[\mathds{1}_{\Omega_k}\Vert \mathbf{e}^m\Vert^2_{\beta}\right]+ \sum_{j=0}^{M-1} \EE\left[\mathds{1}_{\Omega_k}\Vert \erjp-\erj\Vert^2_{\beta}\right]+2k \sum_{j=1}^M\EE\left[\mathds{1}_{\Omega_k}\Vert \erj \Vert^2_{\frac12+\beta}\right]
		\\ 
		\le C_{10} f(k) [\Psi(k)+k+k^{1+\frac12-\beta}]+ C_{10} f(k) [\Psi(k)+k]+C_{10}\lambda_N^{2\beta-1}+C_{10}\lambda_N^{2(\beta-\frac14)}
		\\
		+ C_{10} k[f(k)+1] \sum_{m=0}^{M-1}\max_{1\le j\le m}\EE \left[\mathds{1}_{\Omega_k}  \Vert \erj\Vert^\beta \right].
		\end{split}
		\end{equation*}
		Now, an application of the discrete Gronwall lemma yields
		\begin{equation*}
		\begin{split}
		\max_{1\le m\le M} \EE\left[\mathds{1}_{\Omega_k}\Vert \mathbf{e}^m\Vert^2_{\beta}\right]+ \sum_{j=0}^{M-1} \EE\left[\mathds{1}_{\Omega_k}\Vert \erjp-\erj\Vert^2_{\beta}\right]+2k \sum_{j=1}^M\EE\left[\mathds{1}_{\Omega_k}\Vert \erj \Vert^2_{\frac12+\beta}\right]\\ 
		\le \left( C_{10} f(k) [\Psi(k)+k+k^{1+\frac12-\beta}]+ C_{10} f(k) [\Psi(k)+k]+C_{10} \lambda_N^{2\beta-1}+C_{10}\lambda_N^{2(\beta-\frac14)}\right)e^{C_{10}T[f(k)+1]}.
		\end{split}
		\end{equation*}
		Since 
		$$ \min\{k^{2-2\beta}, k^{1+\frac12-\beta}, k^{2(\frac14-\beta)}, k\}=k^{2(\frac14-\beta)} \text{ and }  \min\{\lambda_N^{2(\beta-\frac14)}, \lambda_N^{2\beta-1} \}=\lambda_N^{2(\beta-\frac14)},
		$$ 
		for any $\beta \in [0,\frac14)$, and  $k^\eps f(k)=k^{\eps}\log{k^{-\eps}}\le \frac 12$, then for any $k>0$ and $\eps\in \Big(0, 2(\frac14-\beta)\Big)$,  we derive that there exists a constant $C>0$ such that
		\begin{equation}
		\begin{split}
		\max_{1\le m\le M}\EE\left[\mathds{1}_{\Omega_k}\Vert \mathbf{e}^m\Vert^2_{\beta}\right]+ \sum_{j=0}^{M-1} \EE\left[\mathds{1}_{\Omega_k}\Vert \erjp-\erj\Vert^2_{\beta}\right]\\+2k \sum_{j=1}^M\EE\left[\mathds{1}_{\Omega_k}\Vert \erj \Vert^2_{\frac12+\beta}\right]\le C k^{-2\eps} [k^{2(\frac14-\beta)}+\lambda_N^{-2(\frac14-\beta)}].
		\end{split}
		\end{equation}
		This estimate completes the proof of the Theorem \ref{THM-MAIN}.
	\end{proof}
	
	\section{Motivating Examples}\label{motivations}
	In this section we give two examples of evolution equations to which we can apply our abstract result.
	\subsection{Stochastic GOY and Sabra shell models}
	The first examples we can take is the GOY and Sabra shell models. To describe this model let us denote by $\mathbb{C}$ the field of complex numbers, $\mathbb{C}^{\mathbb{N}}$ the set of all $\mathbb{C}$-valued sequences, and we set 
	\begin{equation*}
	\h=\left\{\bu=(\bu_n)_{n\in \mathbb{N}} \subset \mathbb{C};  \sum_{n=1}^{\infty}\lvert \bu_n \rvert^2 <\infty \right\}.
	\end{equation*}
	Let $k_0$ be a positive number and $\lambda_n=k_0 2^n$ be a sequence of positive numbers.  The space $\h$ is a separable Hilbert space when endowed with the scalar product defined by 
	$$ \langle \bu, \bv \rangle =\sum_{k=1}^\infty \bu_k \bar{\bv}_k, \; \text{ for } \bu,\; \bv \in \h,$$
	where $\bar{z}$ denotes the  conjugate of any complex number $z$.
	
	We define a linear map $\rA$ with domain $$ D(\rA)= \{ \bu \in \h ; \sum_{n=1}^\infty \lambda_n^4 \lvert \bu_n\rvert^2 <\infty\},$$
	by setting $$ \rA \bu=(\lambda_n^2 \bu_n )_{n\in \mathbb{N}}, \text{ for } \bu \in D(\rA).$$
	It is not hard to check that $\rA$ is a self-adjoint and strictly positive operator. Moreover, the embedding $D(\rA^\alpha)\subset D(\rA^{\alpha+\eps})$ is compact for any $\alpha \in \mathbb{R}$ and $\eps>0$. Thanks to this observation we can and will assume that there exists an orthonormal basis  $\{ \psi_n; n \in \mathbb{N}\} $ of $\h$ such that $$ \rA \psi_n = \lambda_n \psi_n .$$  
	We can  characterize the spaces $D(\rA^\alpha )$, $\alpha \in \mathbb{R}$ as follow
	\begin{equation*}
	D(\rA^\alpha)= \{\bu=(\bu_n)_{n \in \mathbb{N}}\subset \mathbb{C}; \sum_{n=1}^\infty \lambda_n^{4\alpha} \lvert \bu_n\rvert^2 <\infty \}.
	\end{equation*}
	For any $\alpha \in \err$ the space $\ve_\alpha=D(\rA^\alpha)$ is a separable Hilbert space when equipped with the scalar product
	\begin{equation}
	(( \bu, \bv ))_\alpha=\sum_{k=1}^{\infty} \lambda_k^{4\alpha} \bu_k \bar{\bv}_k, \text{ for } \bu,\; \bv\in \ve_\alpha. 
	\end{equation}
	The norm associated to this scalar product will be denoted by $\Vert \bu \Vert_\alpha$, $\bu \in \ve_\alpha$.
	In what follows we set $\ve=D(\rA^\frac12)$.
	
	Now,  let $\alpha_0> \frac12$ and $\{\mathfrak{w}_j; \; j \in \mathbb{N} \}$ be a sequence of mutually independent and identically distributed standard Brownian motions  on  filtered complete probability space $\mathfrak{U}=(\Omega, \mathscr{F}, \mathbb{F}, \mathbb{P})$ satisfying the usual condition. We  set 
	
	$$ W(t)=\sum_{n=0}^\infty \lambda_n^{-\alpha_0} \mathfrak{w}_n(t)\psi_n . $$
	The process $W$ defines a $\h$-valued {process} with covariance $\rA^{-2\alpha_0}$ which is of trace class. We also consider a Lipschitz map $g: [0,\infty) \to \mathbb{R} $ such that $\lvert g(0)\rvert<\infty$. We define a map $G:\h \to \mathscr{L}(\h, \ve_{\frac14})$ defined by 
	$$ G(u)h=g(\lVert u \rVert_0 ) h, \text{ for any } u\in \h, h \in \h.$$ 
	This map satisfies Assumption \ref{nonlin-g}. 
	
	With the above notation, the stochastic evolution equation describing our randomly perturbed GOY and Sabra shell models is given by 
	{
		\begin{equation}
		\label{spdes-1}
		\left\{
		\begin{split}
		&	{d \bu} = [ \rA \bu + \mb(\bu,\bu)]dt+ G(u) dW,
		\\
		&	\bu(0)=\bu_0,
		\end{split}
		\right.
		\end{equation}}
	where  $\mb(\cdot\,,\,\cdot)$ is a bilinear map defined on $\ve\times \ve$ taking values in the dual space $\ve^\ast$. More precisely, we assume that 
	the nonlinear term 
	\begin{equation*}
	\begin{split}
	\mb:\; &\mathbb{C}^{\mathbb{N}}\times\mathbb{C}^{\mathbb{N}}\to \mathbb{C}^{\mathbb{N}},\\
	&(\bu,\bv)\mapsto \mb(\bu,\bv)=(b_1(\bu,\bv),\ldots, b_n(\bu,\bv),\dots)
	\end{split}
	\end{equation*} 
	for the GOY shell model (see \cite{G}) is defined by
	\begin{align*}
	b_n (\bu,\bv) &:= \left(\mb(\bu,\bv) \right)  _{n} \\
	& :=i\lambda_{n}\left(  \frac{1}{4}\overline{v}_{n-1}%
	\overline{u}_{n+1}-\frac{1}{2}\left(  \overline{u}_{n+1}\overline{v}%
	_{n+2}+\overline{v}_{n+1}\overline{u}_{n+2}\right)
	+\frac{1}{8}\overline {u}_{n-1}\overline{v}_{n-2}\right),
	\end{align*} 
	and for the Sabra shell model, it is defined by 
	\begin{align*}
	b_n (\bu,\bv) := &\left(\mb(\bu,\bv) \right)  _{n}:=
	\frac{i}{3}\lambda_{n+1}\left[  
	\overline{v}_{n+1}u_{n+2}+ 2
	\overline{u}_{n+1}v_{n+2}\right] \\
	&  +\frac{i}{3}\lambda_{n}\left[   \overline{u}%
	_{n-1}v_{n+1}-  \overline{v}_{n-1}u_{n+1}\right] \\
	&  +\frac{i}{3}\lambda_{n-1}\left[    2  u_{n-1}v_{n-2}+ u_{n-2}v_{n-1}\right],
	\end{align*}
	for any $\bu=(u_1,\ldots, u_n, \dots)\in \mathbb{C}^{\mathbb{N}}$ and $\bv=(v_1,\ldots,v_n,\ldots)\in \mathbb{C}^{\mathbb{N}}$. 
	
	\begin{lem}\label{NLB}
		\begin{enumerate}[label=(\alph{*})]
			\item \label{NB-Sa}	For any non-negative numbers $\alpha$ and $\beta$ such that $\alpha+\beta \in (0,\frac12]$, there exists a constant $c_0>0$ such that 
			\begin{equation}
			\Vert \mb(\bu,\bv)\Vert_{-\alpha}\le c_0 \begin{cases}
			\Vert \bu \Vert_{\frac12-(\alpha+\beta)} \Vert \bv \Vert_\beta\quad  \text{ for any } \bu \in \ve_{\frac12-(\alpha+\beta)}, \bv \in \ve_{\beta}\\
			\Vert \bu \Vert_{\beta} \Vert \bv \Vert_{\frac12-(\alpha+\beta)}\quad \text{ for any } \bv \in \ve_{\frac12-(\alpha+\beta)}, \bu \in \ve_{\beta}.
			\end{cases}
			\end{equation} 
			\item \label{NB-Sb} For any $\bu \in \h, \bv \in \ve$
			\begin{equation}
			\langle \bb(\bu,\bv),\bv\rangle=0.
			\end{equation}
		\end{enumerate}
		
	\end{lem}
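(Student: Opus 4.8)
The plan is to treat the two assertions separately, exploiting the two structural features of the shell nonlinearities: the \emph{geometric} growth $\lambda_n=k_0 2^n$ (so that $\lambda_{n\pm1},\lambda_{n\pm2}$ are comparable to $\lambda_n$ up to absolute constants, and satisfy the \emph{exact} relations $\lambda_{n+1}=2\lambda_n$, $\lambda_{n+2}=4\lambda_n$), and the \emph{finite range} of the coupling: each component $b_n(\bu,\bv)$ is a fixed finite linear combination of monomials $\lambda_{n+r}\,w_{n+p}\,z_{n+q}$ with $w,z\in\{u,\bar u,v,\bar v\}$ and shifts $p,q,r\in\{-2,-1,0,1,2\}$, each monomial being the product of exactly one $u$-type and one $v$-type entry. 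Throughout I would use the convention $u_n=v_n=0$ for $n\le0$, so that index shifts create no spurious boundary contributions.

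For part \ref{NB-Sa}, set $a:=\frac12-(\alpha+\beta)$ and recall $\Vert\mb(\bu,\bv)\Vert_{-\alpha}^2=\sum_n\lambda_n^{-4\alpha}|b_n(\bu,\bv)|^2$. Since $b_n$ is a finite sum and $\lambda_{n+r}\le4\lambda_n$, the Cauchy--Schwarz inequality gives $|b_n(\bu,\bv)|^2\le C\lambda_n^2\sum_{(p,q)}|u_{n+p}|^2|v_{n+q}|^2$, the sum running over the finitely many shifts occurring in $b_n$; so it suffices to bound $\sum_n\lambda_n^{2-4\alpha}|u_{n+p}|^2|v_{n+q}|^2$ for each fixed $(p,q)$. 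I would pass to the weighted sequences $x_m:=\lambda_m^{2a}|u_m|$ and $y_m:=\lambda_m^{2\beta}|v_m|$, for which $\sum_m x_m^2=\Vert\bu\Vert_a^2$ and $\sum_m y_m^2=\Vert\bv\Vert_\beta^2$. Because $a,\beta\ge0$ and $\lambda_{n+p}\asymp\lambda_n$, one replaces $\lambda_{n+p}^{-4a}$ and $\lambda_{n+q}^{-4\beta}$ by $C\lambda_n^{-4a}$ and $C\lambda_n^{-4\beta}$; the decisive point is that the surviving power is $2-4\alpha-4a-4\beta=2-4(\alpha+a+\beta)=0$, since $\alpha+a+\beta=\frac12$ by the choice of $a$. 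Hence $\sum_n\lambda_n^{2-4\alpha}|u_{n+p}|^2|v_{n+q}|^2\le C\sum_n x_{n+p}^2\,y_{n+q}^2\le C\big(\sup_m y_m^2\big)\sum_n x_{n+p}^2\le C\Vert\bu\Vert_a^2\Vert\bv\Vert_\beta^2$, which is the first inequality. The second is obtained verbatim after interchanging the weights, i.e. taking $x_m=\lambda_m^{2\beta}|u_m|$, $y_m=\lambda_m^{2a}|v_m|$; the exponent balances to $0$ again because $\alpha+\beta+a=\frac12$.

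For part \ref{NB-Sb}, the strategy is a direct reindexing of $\langle\mb(\bu,\bv),\bv\rangle=\sum_n b_n(\bu,\bv)\,\overline{v}_n$. I would substitute the explicit expression for $b_n$, expand into the individual monomials $\lambda_{n+r}w_{n+p}z_{n+q}\overline{v}_n$, and shift the summation index in each so that all monomials carrying the \emph{same} triple of sequence entries are collected together. For the GOY model the four monomials split into two groups of two, whose coefficient pairs are $\tfrac14\lambda_{m-1}$ versus $-\tfrac12\lambda_{m-2}$, and $\tfrac18\lambda_{m+1}$ versus $-\tfrac12\lambda_{m-1}$; the exact relations $\lambda_{m-1}=2\lambda_{m-2}$ and $\lambda_{m+1}=4\lambda_{m-1}$ force each group to vanish identically, giving $\langle\mb(\bu,\bv),\bv\rangle=0$ as a complex number. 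For the Sabra model the same reindexing makes one pair of monomials cancel outright, while the four survivors organize into two conjugate pairs of the form $S$ and $-\overline S$, whose sums $S-\overline S$ are purely imaginary; consequently $\mathrm{Re}\,\langle\mb(\bu,\bv),\bv\rangle=0$, which is precisely what Assumption \ref{NB-b} requires when combined with $\langle\rA\bv,\bv\rangle=\Vert\bv\Vert_{\frac12}^2$.

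The estimate in \ref{NB-Sa} is essentially routine; the only point to watch is that $a=\frac12-(\alpha+\beta)\ge0$, so that the comparison $\lambda_{n+p}^{-4a}\le C\lambda_n^{-4a}$ is legitimate, and this is guaranteed by the hypothesis $\alpha+\beta\le\frac12$. I expect the genuine obstacle to be the bookkeeping in \ref{NB-Sb}: the cancellation is exact only for the fully conjugated GOY nonlinearity and survives merely in the real part for Sabra, so the conjugations, signs, and index shifts must all be tracked without error. No single step is hard, but the reindexing is where a slip is most likely.
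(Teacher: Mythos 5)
Your part \ref{NB-Sa} is correct and is essentially the paper's own proof in different clothing: the paper estimates $\lvert\langle \mb(\bu,\bv),\bw\rangle\rvert$ by duality against $\Vert\bw\Vert_\alpha\le 1$, splits $b_n$ into its finitely many monomials, and runs exactly your weighted Cauchy--Schwarz with the exponent balance $\alpha+\beta+(\tfrac12-(\alpha+\beta))=\tfrac12$, absorbing one factor via $\max_n\le\sum_n$ (your $\ell^\infty$--$\ell^2$ split); working directly on the weighted-$\ell^2$ expression for $\Vert\cdot\Vert_{-\alpha}$ is an equivalent packaging. One small inaccuracy in your commentary: the comparison $\lambda_{n+p}^{-4a}\le C\lambda_n^{-4a}$ does \emph{not} require $a\ge0$, since $\lambda_{n+p}=2^p\lambda_n$ exactly, so the ratio of weights is the fixed constant $2^{-4pa}$ for either sign of $a$ and bounded shift $p$; the hypothesis $\alpha+\beta\le\tfrac12$ is needed only so that the surviving weight is the $\ve_{\frac12-(\alpha+\beta)}$ norm appearing in the statement.

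Where you genuinely depart from the paper is part \ref{NB-Sb}: the paper gives no proof at all, citing \cite[Proposition 1]{Constantin}, whereas you supply the reindexing computation, and your bookkeeping is right. For GOY all monomials in $\sum_n b_n(\bu,\bv)\bar{\bv}_n$ are fully conjugated, and after shifting they cancel in pairs thanks to the exact relations $\lambda_{m+1}=2\lambda_m$ (e.g.\ the pair with coefficients $\tfrac14\lambda_m$ and $-\tfrac12\lambda_{m-1}$), so the pairing vanishes as a complex number; for Sabra one pair cancels outright and the four survivors combine into expressions of the form $S-\overline{S}$, which are purely imaginary, so only $\mathrm{Re}\,\langle\mb(\bu,\bv),\bv\rangle=0$. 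This is a point the paper glosses over: with the complex scalar product $\sum_k\bu_k\bar{\bv}_k$ fixed at the start of Section 5, the display in \ref{NB-Sb} is, for Sabra, literally true only after taking real parts, and your reading --- that the real part is all that Assumption \ref{NB-b} and the energy estimates use, consistently with the cited Proposition 1 of \cite{Constantin} --- is the correct one. The only step you should add in a final write-up of \ref{NB-Sb} is the justification of the index shifts: for $\bu\in\h$ and $\bv\in\ve$ the series converges absolutely (bound $\sum_n\lambda_n\lvert u_a\rvert\,\lvert v_b\rvert\,\lvert v_c\rvert$ by H\"older with $\lambda_b\lvert v_b\rvert\in\ell^2$, $\lvert v_c\rvert\in\ell^2$, $\lvert u_a\rvert\in\ell^\infty$, using $\lambda_n\asymp\lambda_b$), so the rearrangements are legitimate. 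With that line included, your argument is complete and, for part \ref{NB-Sb}, strictly more self-contained than the paper's.
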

	\begin{proof}
		The item \ref{NB-Sb} was proved in \cite[Proposition 1]{Constantin}, thus we omit its proof. 
		
		Item \ref{NB-Sa} can be viewed as a generalization of \cite[Proposition 1]{Constantin}. 
		We will just prove the latter item for the Sabra shell model since the proofs for the two models are very similar. 
		Let $\bu \in \ve_{\frac12-(\alpha+\beta)}$, $\bv \in \ve_{\beta}$, and $\bw \in \ve_{\alpha}$ such that $\Vert \bw \Vert_\alpha \le 1$. We have 
		\begin{align*}
		\vert \langle \mb(\bu, \bv), \bw \rangle \vert =&\lvert \sum_{n=1}^\infty b_n(\bu,\bv) \bar{\bw}_n \rvert\le \sum_{n=1}^\infty \vert b_n(\bu,\bv)\vert \vert \bw_n\vert\\
		\le& \frac13 \sum_{n=1}^\infty \lambda_{n+1} \left(\lvert \bu_{n+1}\rvert\cdot \vert \bv_{n+2}\vert + \vert \bu_{n+2} \vert \cdot \lvert \bv_{n+1}\vert \right)\lvert \bw_n \rvert\\
		&+ \frac13 \sum_{n=1}^\infty \lambda_{n} \left(\lvert \bu_{n-1}\rvert\cdot \vert \bv_{n+1}\vert + \vert \bu_{n+1} \vert \cdot \lvert \bv_{n-1}\vert \right)\lvert \bw_n \rvert\\
		&+ \frac13 \sum_{n=1}^\infty \lambda_{n-1} \left(\lvert \bu_{n-1}\rvert\cdot \vert \bv_{n-2}\vert + \vert \bu_{n-2} \vert \cdot \lvert \bv_{n-1}\vert \right)\lvert \bw_n \rvert\\
		\le& I_1 + I_2 +I_3.
		\end{align*}
		For the term $I_1$ we have 
		\begin{align*}
		I_1\le &\frac13 \sum_{n=1}^\infty \lambda_{n+1} \lvert \bu_{n+1}\rvert \cdot \lvert \bv_{n+2}\rvert \lvert \bw_n \rvert + \frac13 \sum_{n=1}^\infty  \lambda_{n+1} \lvert \bu_{n+2}\rvert \cdot \lvert \bv_{n+1}\rvert \lvert \bw_n \rvert \\
		\le &  I_{1,1}+ I_{1,2}.
		\end{align*} 
		We will treat the term $I_{1,1}$. By H\"older's inequality we have 
		\begin{align*}
		I_{1,1}&\le \frac13 \sum_{n=1}^\infty k_0 2 \lambda^{1-2\alpha} \lvert \bu_{n+1}\rvert \cdot \lvert \bv_{n+2} \lvert \lambda_n^{2\alpha} \lvert \bw_n \rvert\\
		&\le \frac23 k_0  \left (\sum_{n=1}^\infty k_0 2 \lambda_n^{2-4(\alpha+\beta)} \lvert \bu_{n+1}\rvert^2  \lambda_n^{4\beta} \lvert \bv_{n+2} \vert^2\right)^\frac12 \left(\sum_{n=1}^\infty \lambda_n^{4\alpha}\lvert \bw_n \rvert^2\right)^\frac12.
		\end{align*}
		Since $\Vert \bw \Vert_\alpha \le 1$ and $\lambda_{n+p}= k_0^p 2^p \lambda_n$  we can find a constant $C>0$ depending only on $\alpha, \beta$ and $k_0$ such that 
		\begin{align*}
		I_{1,1}\le C \left(\max_{k\in \mathbb{N}} \lambda_{n+1}^{2-4(\alpha+\beta)} \lvert \bu_{n+1}\rvert^2 \right)^\frac12 \left(\sum_{n=1}^\infty \lambda_{n+2 }^{4\beta} \lvert \bv_n \rvert^2 \right)^\frac12
		\\
		\le C \left( \sum_{n=1}^\frac12 \lambda_{n+1}^{4[\frac12-(\alpha+\beta)]} \lvert \bu_{n+1}\rvert^2 \right)^\frac12  \left(\sum_{n=1}^\infty \lambda_{n+2 }^{4\beta} \lvert \bv_n \rvert^2 \right)^\frac12,
		\end{align*}
		from which we easily derive that 
		\begin{equation*}
		I_{1,1}\le C \Vert \bu \Vert_{\frac12-(\alpha+\beta)} \Vert \bv \Vert_{\beta}.
		\end{equation*}
		One can use an analogous argument to show that 
		\begin{equation*}
		I_{1,2}\le C \Vert \bu \Vert_{\frac12-(\alpha+\beta)} \Vert \bv \Vert_{\beta}.
		\end{equation*}
		Hence, 
		\begin{equation*}
		I_{1}\le C \Vert \bu \Vert_{\frac12-(\alpha+\beta)} \Vert \bv \Vert_{\beta}.
		\end{equation*}
		Using a similar argument we can also prove that for any non-negative numbers $\alpha$ and $\beta $ satisfying $\alpha+\beta \in (0,\frac12]$ there exists a constant $C>0$ such that
		\begin{equation*}
		I_2+I_3 \le C \Vert \bu \Vert_{\frac12-(\alpha+\beta)} \Vert \bv \Vert_{\beta},
		\end{equation*}
		for any $\bu \in \ve_{\frac12-(\alpha+\beta)}$ and $\bv \in \ve_{\beta}$. 
		Therefore, for any non-negative numbers $\alpha$ and $\beta $ satisfying  $\alpha+\beta \in (0,\frac12]$ we can find a constant $C>0$ such that 
		\begin{equation*}
		\Vert \mb(\bu,\bv)\Vert_{-\alpha}\le C \Vert \bu \Vert_{\frac12-(\alpha+\beta)} \Vert \bv \Vert_{\beta},
		\end{equation*}
		for any $\bu \in \ve_{\frac12-(\alpha+\beta)}$ and $\bv \in \ve_{\beta}$. 
		Interchanging the role of $\bu$ and $\bv$ we obtain that for any two numbers $\alpha$ and $\beta$ as above there exists a positive constant $C$ such that 
		\begin{equation*}
		\Vert \mb(\bu,\bv)\Vert_{-\alpha}\le C \Vert \bv \Vert_{\frac12-(\alpha+\beta)} \Vert \bu \Vert_{\beta},
		\end{equation*}
		for any $\bv \in \ve_{\frac12-(\alpha+\beta)}$ and $\bu \in \ve_{\beta}$. Thus, we have just completed the proof of the lemma for the Sabra shell model. As we mentioned earlier, the case of the GOY model can be dealt with a similar argument.
	\end{proof}
	For more mathematical results related to shell models we refer to \cite{Flandoli1}, \cite{HB+BF-12}, \cite{Flandoli3}, and references therein. 
	\subsection{Stochastic nonlinear heat equation}
	Let $\mo$ be a bounded domain of $\mathbb{R}^d$, $d=1,2$. We assume that its boundary $\partial \mo$ is of class $\mathcal{C}^\infty$.  
	Throughout this section we will denote by $\h^\theta(\mo)$, $\theta\in \mathbb{R}$, the (fractional) Sobolev spaces as defined in \cite{Triebel} and $\h^1_0(\mo)$ be the space of functions $\bu \in \h^1$ such that $\bu_{\vert _{\mo}}=0$. In particular, we set $\h=\el^2(\mo)$ and we denote its scalar product by $(\cdot, \cdot)$.
	
	We define a continuous bilinear map $\mathfrak{a}:\h^1_0(\mo)\times \h^1_0(\mo) \to \mathbb{R}$  by setting 
	\begin{equation*}
	\mathfrak{a}(\bu, \bv)=(\nabla \bu, \nabla \bv),
	\end{equation*}
	for any $\bu, \bv\in \h^1_0(\mo)$. Thanks to the Riesz representation there exists a densely linear map $\rA$ with domain $D(\rA)\subset \h$ such that 
	\begin{equation*}
	\langle \rA \bv, \bu \rangle =\mathfrak{a}(\bv,\bu),
	\end{equation*}
	for any $\bu, \bv\in \h^1_0(\mo)$. It is well known that $\rA$ is a self-adjoint and definite positive and its eigenfunctions $\{ \psi_n; n \in \mathbb{N}\}\subset \mathcal{C}^\infty(\mo)$ form an orthonormal basis of $\h$. The family of eigenvalues associated to $\{ \psi_n; n \in \mathbb{N}\}$ is denoted by $\{\lambda_n; n \in \mathbb{N}\}$. Observe that the asymptotic behaviour of the eigenvalues is given by $\lambda_n \sim \lambda_1 n^{\frac2d}$.
	For any $\alpha\in \mathbb{R}$ we set $\ve_\alpha=D(\rA^\alpha)$, in particular we put $\ve:=D(\rA^\frac12)$. We always understand that the norm in $\ve_\alpha$ is denoted by $\lVert \cdot \rVert_0 $.

	Now,  let $\alpha_0> \frac{d+1}{4}$ and $\{\mathfrak{w}_j; \; j \in \mathbb{N} \}$ be a sequence of mutually independent and identically distributed standard Brownian motions  on  filtered complete probability space $\mathfrak{U}=(\Omega, \mathscr{F}, \mathbb{F}, \mathbb{P})$ satisfying the usual condition. We set 
	
	$$ W(t)=\sum_{n=0}^\infty \lambda_n^{-\alpha_0} \mathfrak{w}_n(t)\psi_n . $$
	The process $W$ defines a $\h$-valued with covariance $\rA^{-2\alpha_0}$ which is of trace class. We also consider a Lipschitz map $g: [0,\infty) \to \mathbb{R} $ such that $\lvert g(0)\rvert<\infty$. We define a map $G:\h \to \mathscr{L}(\h, \ve_{\frac14})$ defined by 
	$$ G(u)h=g(\lVert u \rVert_0 ) h, \text{ for any } u\in \h, h \in \h.$$ 
	This map satisfies Assumption \ref{nonlin-g}.

	The second example we can treat is the stochastic nonlinear heat equation 
	\begin{subequations}\label{RDE}
		\begin{align}
		& d{\bu}-[\Delta \bu -\lvert \bu\vert \bu]dt ={ g(\rVert \bu\lVert_0 )dW},\\
		& \bu=0\text{ on } \partial \mo,\\
		& \bu(0,x)=\bu_0 \quad x\in \mo.
		\end{align}
	\end{subequations}
	This stochastic system can be rewritten as an abstract stochastic evolution equation 
	\begin{equation*}
	d\bu +[\rA\bu  + B(\bu,\bu)] dt=G(\bu)dW, \qquad  \bu(0)=\bu_0 \in \h, 
	\end{equation*}
	where $\rA$ and $G$ are defined as above and the $D(\rA^{-\frac12})$-valued nonlinear map $\mb$ is defined on $\h \times D(\rA^\frac12)$ or $D(\rA^\frac12 )\times \h$ by setting
	$$ \mb(\bu,\bv)= \lvert \bu \rvert \bv, $$ for any $(\bu,\bv)\in \h \times D(\rA^\frac12)$ or  $(\bu,\bv) D(\rA^\frac12 )\times \h$.
	It is clear that 
	\begin{align}
	\langle \rA \bv+\mb(\bu,\bv), \bv \rangle \ge \Vert \bv \Vert^2_\frac12,
	\end{align}
	for any $\bu, \bv \in \ve$. Here we should note that thanks to the solution of Kato's square root problem in \cite[Theorem 1]{Auscher}, see also \cite[Section 7]{Kenig}, we have $\Vert \bu \Vert_\frac12 \simeq \vert \nabla \bu \vert$ for any $\bu \in \h^1_0(\mo)$, i.e, $\ve=\h^1_0(\mo)$.
	
	Now we claim that for any numbers $\alpha\in [0,\frac12)$ and $\beta\in (0,\frac12)$ such that $\alpha+\beta \in (0,\frac12)$, there exists a constant $c_0>0$ such that 
	\begin{equation}\label{Claim}
	\Vert \mb(\bu,\bv)\Vert_{-\alpha}\le c_0 \begin{cases}
	\Vert \bu \Vert_{\frac12-(\alpha+\beta)} \Vert \bv \Vert_\beta\quad  \text{ for any } \bu \in \ve_{\frac12-(\alpha+\beta)}, \bv \in \ve_{\beta}\\
	\Vert \bu \Vert_{\beta} \Vert \bv \Vert_{\frac12-(\alpha+\beta)}\quad \text{ for any } \bv \in \ve_{\frac12-(\alpha+\beta)}, \bu \in \ve_{\beta},
	\end{cases}
	\end{equation}
	and 
	\begin{equation}\label{Claim-2}
	\Vert \mb(\bu,\bv)\Vert_{-\frac12}\le c_0 
	\Vert \bu \Vert_{\frac14} \Vert \bv \Vert_{\frac14}\quad \text{ for any } \bv \in \ve_{\frac14}, \bu \in \ve_{\frac14}.
	\end{equation}
	To prove these inequalities,  let $\beta> 0$ such that $\alpha +\beta < \frac12$. Since
	$$ \left(\frac12-\alpha\right) + \left(\frac12- 1+2(\alpha+\beta)\right)+ \left(\frac12-\beta\right)= 1,$$ we have 
	\begin{equation}\label{leb-ineq}
	\lvert \langle \vert \bu \vert \bv, \bw \rangle\rvert \le C_0 \Vert \bu \Vert_{\el^{r} } \Vert \bv\Vert_{\el^s} \Vert \bw \Vert_{\el^q}, 
	\end{equation} 
	where the constants $q,r,s$ are defined through 
	$$ 
	\frac1q=\frac12-\alpha, \;\; \frac1s=\alpha+\beta,\;\; \frac1r= \frac12-\beta.
	$$
	Recall that $\ve_\alpha\subset \h^{2\alpha}\subset \el^{q} $ with $\frac1q=\frac12- \alpha$ if $\alpha\in (0,\frac12)$ and $q\in [2,\infty)$ arbitrary if $\alpha=\frac12$. Then, we derive from \eqref{leb-ineq} that the second inequality in \eqref{Claim} holds. By interchanging the role of $r$ and $s$ we derive that the first inequality in \eqref{Claim} also holds.  One can establish \eqref{Claim-2} with the same argument. The estimates \eqref{Claim} and \eqref{Claim-2} easily {imply} \eqref{Eq-nonlin-B} and \eqref{Eq-nonlin-B-2}.
	
	Now we need to check that $\mb(\cdot, \cdot)$ satisfies \eqref{Eq-nonlin-B-H2}. For this purpose we observe that there exists a constant $C>0$ such that
	\begin{align*}
	\lvert \mb (\bu,\bv)\rvert\le C\lVert \bu \rVert_0 \lVert \bv \rVert_{\el^\infty},
	\end{align*}
	which with the continuous embedding $\ve_{\frac12+\eps}\subset \el^\infty$ for any $\eps>0$ implies \eqref{Eq-nonlin-B-H2}.

\end{document}